\newtheorem{assumption}{Assumption}
\crefname{hypothesis}{Hypothesis}{Hypotheses}
\crefname{fact}{Fact}{Facts}
\title{Convergence-Guaranteed Algorithms for  $\ell_{1/2}$-Regularized 
Quadratic Programs with Assignment Constraints\thanks{Submitted to the editors DATE.
\funding{
 The work of Ran Gu was supported in part by the National Key R\&D Program of China (2022YFA1003800),
  the National Natural Science Foundation of China (12201318), 
  the Natural Science Foundation of Tianjin (25JCJQJC00300), 
  and the Tianjin Science and Technology Program (24ZXZSSS00320). 
  The work of Xin Liu was supported in part by the National Natural Science Foundation of China (12125108, 12288201)
   and the Research Grants Council grant JLFS/P-501/24 for the CAS AMSS–PolyU Joint Laboratory in Applied Mathematics.
}}}
\author{Lijun Xie\thanks{NITFID, School of Statistics and Data Sciences, LPMC, KLMDASR and LEBPS, Nankai University
, Tianjin 300071, China
  (\email{2120230133@mail.nankai.edu.cn}).}
\and Ran Gu\thanks{NITFID, School of Statistics and Data Sciences, LPMC, KLMDASR and LEBPS, Nankai University
, Tianjin 300071, China
  (\email{rgu@nankai.edu.cn}).}
\and Xin Liu\thanks{State Key Laboratory of Mathematical Sciences, Academy of Mathematics and Systems Science, 
Chinese Academy of Sciences, Beijing 100190, China, 
and School of Mathematical Sciences, University of Chinese Academy of Sciences, 
Beijing 100049, China (\email{liuxin@lsec.cc.ac.cn}).}
}
\begin{document}

\maketitle
\begin{abstract}
This paper addresses a quadratic problem with assignment constraints, an NP-hard combinatorial optimization problem arisen from facility location, multiple-input multiple-output detection, and maximum mean discrepancy calculation et al. The discrete nature of the constraints precludes the use of continuous optimization algorithms. Therefore, we begin by relaxing the binary constraints into continuous box constraints and incorporate an $\ell_{1/2}$ regularization term to drive the relaxed variables toward binary values.
We prove that when the regularization parameter is larger than a threshold, the regularized problem is equivalent to the original problem: they share identical local and global minima, and all Karush-Kuhn-Tucker points of the regularized problem are feasible assignment matrices. To solve the regularized problem approximately and efficiently, we adopt the variable splitting technique, and solve it using the alternating direction method of multipliers (ADMM) framework, in which all subproblems admit closed-form solutions. Detailed theoretical analysis confirms the algorithm's convergence, and finite-step termination under certain conditions. Finally, the algorithm is validated on various numerical tests.
\end{abstract}

\begin{keywords}
 \(\ell_{1/2}\) quasi-norm regularization; ADMM; nonconvex optimization; maximum mean
discrepancy
\end{keywords}

\begin{MSCcodes}
90C26, 90C30, 65K05
\end{MSCcodes}

\section{Introduction}
In this paper, we consider a quadratic problem with generalized assignment matrices constraints formulated as:
\begin{equation}
\label{eq:ori-pro}
\min_{X \in \mathcal{F}_1 } \quad  \frac{1}{2} \langle A,\, XX^{\top} \rangle + \langle G,\, X \rangle,
\end{equation}
where $A\in \mathbb{R}^{n\times n}$ is a positive semi-definite matrix and $\mathcal{F}_1$ denotes the set of assignment matrices:
\begin{equation*}
    \mathcal{F}_1 = \{ X \in \mathbb{R}^{n\times m} \mid \mathbf{1}_n^{\top} X = b ~ \mathbf{1}_m^{\top}, X \mathbf{1}_m = \mathbf{1}_n, X_{ij} \in \{0,1\} \},
\end{equation*}
where $b = n/m$. Here, $m$ is assumed to be a divisor of $n$, ensuring $b$ is a positive integer, and $\mathbf{1}_k \in \mathbb{R}^k$ denotes the all-ones vector of dimension $k$. 

This formulation arises naturally in scenarios involving two sets of entities: a source set with \( n \) elements and a target set with \( m \) elements. An assignment matrix \( X \in \mathcal{F}_1 \) encodes a deterministic mapping between these sets, where the binary entry \( X_{ij} = 1 \) indicates that the \( i \)-th source entity is assigned to the \( j \)-th target entity. The constraints enforce two key structural properties. Each source entity is assigned to exactly one target entity (enforced by \( X \mathbf{1}_m = \mathbf{1}_n \)), ensuring a total partition of the source set.  Each target entity receives exactly \( b \) source entities (enforced by \( \mathbf{1}_n^{\top} X = b~\mathbf{1}_m^{\top} \)), reflecting a balanced distribution of the source entities across the target set.

Notably, when \( n = m \) and \( b = 1 \), this problem becomes a special case of the quadratic assignment problem, which traditionally restricts \( X \) to be a permutation matrix. By generalizing the permutation constraint to allow \( n \neq m \) and balanced multi-assignment (\( b \geq 1 \)), our formulation captures a broader class of practical assignment scenarios where entities can be grouped or distributed. Problem \cref{eq:ori-pro} finds widespread applications in real-world domains, including facility location planning \cite{cubukcuoglu2021hospitallayout}, memory layout optimization in signal processors \cite{Memory_Layout}, scheduling \cite{geoffrion1976scheduling}, multiple-input multiple-output detection \cite{zhao2021MIMO_detection}, and maximum mean discrepancy (MMD) \cite{2021-AAAI-MMD}, to name but a few. Furthermore, many discrete combinatorial optimization problems, such as the traveling salesman problem, the maximum clique problem, and graph partitioning, can be reduced to instances of problem \cref{eq:ori-pro} \cite{surveyforQAP}. 

A fundamental relaxation strategy involves softening the binary constraints \( X_{ij} \in \{0,1\} \) to continuous bounds \( 0 \leq X \leq 1 \), where \( \leq \) denotes componentwise ordering, resulting in the following problem.
\begin{equation}
\label{eq:relaxed_pro}
\min_{X \in \mathcal{F}_2 } \quad  \frac{1}{2} \langle A,\, XX^{\top} \rangle + \langle G,\, X \rangle,
\end{equation}
where  $\mathcal{F}_2$ is the transportation polytope defined as: $$   \mathcal{F}_2 = \left\{ X \in \mathbb{R}^{n \times m} \mid \mathbf{1}_n^{\top} X = b ~ \mathbf{1}_m^{\top}, \, X \mathbf{1}_m = \mathbf{1}_n, \, 0 \leq X \leq 1 \right\}.$$

By the Hoffman-Kruskal theorem \cite{schrijver2002combinatorial}, the convex hull of matrices in $\mathcal{F}_1$ coincides with the set of $\mathcal{F}_2$, thereby ensuring that the relaxation is tight for linear assignment problem. However, the convex quadratic structure of the objective function in problem \cref{eq:relaxed_pro} allows the optimal solution to lie in points that are not vertices of $\mathcal{F}_2$, thereby violating the binary constraints in $\mathcal{F}_1$. Thus, problem \cref{eq:relaxed_pro} is not equivalent to the original binary constrained problem.

It can be observed that the solution to problem \cref{eq:ori-pro} exhibits strong sparsity due to the assignment constraints.
To leverage this sparsity and enforce binary solutions for problem \cref{eq:relaxed_pro}, we add an sparse-inducing regularization term to the objective function. 
The \(\ell_0\) quasi-norm is well known for its ability to promote sparsity, however, it is discontinuous in \(X\), and thus difficult to optimize. The \(\ell_1\)-norm is often used as a good approximation of the \(\ell_0\) quasi-norm under certain conditions. Nevertheless, it fails to promote sparsity for problem \cref{eq:relaxed_pro}, since for any \(X \in \mathcal{F}_2\), \(\|X\|_1\) remains a constant \(n\). To better induce sparsity, we employ the $\ell_p$ quasi-norm (defined as \(\| X \|_p^p = \sum_{i=1}^n \sum_{j=1}^m | X_{ij} |^p\), \(0 <p <1 \)), which has demonstrated strong capability in recovering sparse solutions \cite{Chartrand_2008_lp, jiang2009notelpnphard, xu2012l_}.
Specifically, $\ell_{1/2}\text{ quasi-norm}$ has been well studied, and it admits closed-form solutions in subproblems of some proximal gradient algorithms \cite{lpjiangbo,xu2012l_}, which facilitates efficient computation. Thus we take $p =\frac{1}{2}$ in this work, leading to the problem \cref{eq:l1/2-pro}:  
\begin{equation}
\label{eq:l1/2-pro}
\min_{X \in \mathcal{F}_2} \quad  \frac{1}{2} \langle A,\, XX^{\top} \rangle + \langle G,\, X \rangle + \eta \|X\|_{1/2}^{1/2}.
\end{equation}  

In \cref{sec: model}, we investigate the theoretical properties of problem~\cref{eq:l1/2-pro}. When the sparsity penalty parameter satisfies $\eta > 4 \lambda_{\text{max}}(A)$, 
the overall objective function becomes strongly concave, and the \(\ell_{1/2}\)-regularization term will drive the entries of 
$X$ toward binary values under the row and column constraints. Moreover, when \(\eta > 4 \|A\|_{\infty}\), the regularized problem \cref{eq:l1/2-pro} is equivalent to the original binary problem \cref{eq:ori-pro}, and they possess identical global and local minimizers. 
 
Building upon the above theoretical results, we next develop a novel algorithmic framework to solve problem \cref{eq:l1/2-pro}. Although problem~\cref{eq:l1/2-pro} can be solved by first-order schemes such as subgradient projection or proximal point methods, the corresponding subproblems require joint treatment of projection onto a simplex and $\ell_{1/2}$ regularization, resulting in the absence of closed-form solutions. Therefore, we reformulate the problem using a variable-splitting technique, as shown in~\cref{eq:l1/2-proxy}:
\begin{equation}
\label{eq:l1/2-proxy}
\min_{(X,Y) \in \mathcal{F}_3} \quad  \frac{1}{2} \langle A,\, XY^{\top} \rangle + \langle G,\, Y \rangle + \eta \|X\|_{1/2}^{1/2},
\end{equation}  
where the feasible set \( \mathcal{F}_3 \) is defined as:  
\[
\mathcal{F}_3 = \left\{ X,Y \in \mathbb{R}^{n \times m} \mid \mathbf{1}_n^{\top} Y = b ~ \mathbf{1}_m^{\top}, ~ Y \mathbf{1}_m = \mathbf{1}_n, ~ 0 \leq X \leq 1, ~ Y - X = 0 \right\}.
\]  
Note that by imposing $X=Y$, problem \cref{eq:l1/2-proxy} becomes exactly equivalent to problem~\cref{eq:l1/2-pro}. They share identical global optima and KKT points, as established in \cref{pro:Equivalence_of_KKT_points}.
In this reformulation, the quadratic term is transformed into a bilinear one, while the constraint set $\mathcal{F}_2$ is decomposed into separate affine constraints in $Y$
 and simple box constraints in 
$X$. When solved via the Alternating Direction Method of Multipliers (ADMM), both the $X$-subproblem and the $Y$-subproblem admit exact closed-form solutions, thereby enhancing the effectiveness and robustness of the ADMM algorithm.

Our theoretical contributions focus on the convergence guarantees of the ADMM algorithm. While the convergence of ADMM in convex settings has been thoroughly established \cite{chen2019extended,chen2025convergence,han2018linear,wenzaiwen,mihic2021managing,li2016majorized}, we focus our discussion on the nonconvex setting.
Hong et al.~\cite{hong2016convergence} analyzed problems with linear constraints and coupled variables, which may include convex nonsmooth or nonconvex smooth terms. 
Jiang et al.~\cite{ma2019structured} explored nonsmooth nonconvex and variable coupling problems. Their framework allows the possibly nonsmooth nonconvex part to be Lipschitz continuous or lower semicontinuous. However, the last block of variables must be unconstrained. Wang et al.~\cite{yin2019global} also studied the variable coupling case, but their analysis requires the objective function to be smooth.
Han \cite{han2022survey} proposed a semi-proximal ADMM variant with convergence guarantees to solve three-block nonconvex optimization problems under linear constraints. However, this method requires the three variable blocks to be separable in the objective function.
Other works~\cite{boct2020proximal,cohen2022dynamic,hien2024inertial,kong2024global,themelis2020douglas,yashtini2022convergence} also explore ADMM for nonsmooth nonconvex problems, but a very small fraction of the existing frameworks explore more complicated structure problems such as problem \cref{eq:l1/2-proxy}. The structure of problem \cref{eq:l1/2-proxy} features a nonsmooth nonconvex penalty term, a coupling term, and constraints on all variables.
Then we further demonstrate that the algorithm converges to a binary solution exactly within a finite number of iterations, avoiding infinite loops.

Our overall contributions are summarized as follows:  
\begin{enumerate}  
    \item We relax the binary constraints into box constraints and employ an \(\ell_{1/2}\) quasi-norm regularization. The formulation is provably equivalent to the original problem under certain conditions. 
    
    \item We reformulate problem~\cref{eq:l1/2-pro} into the equivalent problem~\cref{eq:l1/2-proxy}, which shares identical KKT points with it. We adopt the ADMM algorithm to solve it, where both the $X$- and $Y$-subproblems admit closed-form solutions. Experimental results on representative examples demonstrate that our algorithm performs effectively and robustly in practical scenarios.    
    \item Theoretically, we conduct a comprehensive analysis encompassing convergence guarantees and the finite-step termination property of our algorithm. The theoretical understanding of ADMM for problems involving nonconvexity, nonsmoothness, coupling term and full constraints on all variables is still evolving. By rigorously analyzing a specific instance within this challenging domain, our work contributes to establishing a theoretical foundation for future studies of ADMM in similarly structured complex optimization problems.
\end{enumerate}

The paper is organized as follows. Mathematical preliminaries are presented in \cref{sec: prelimi}.  The link between the \(\ell_{1/2}\)-regularized problem \cref{eq:l1/2-pro} and the original problem \cref{eq:ori-pro} is proved in \cref{sec: model}.  The proposed algorithm is detailed in \cref{sec: algo}. The theoretical analysis of ADMM is provided in \cref{Theoretical}. The experimental results are shown in \cref{sec:experiments}. Finally, conclusions are drawn in \cref{sec:conclusions}.

\section{Mathematical Preliminaries}
\label{sec: prelimi}
In this section, we present some essential preliminaries, which lay the foundation for the convergence analysis of the proposed ADMM algorithm in subsequent chapters. To begin with, we review essential concepts from nonconvex subdifferential calculus, with comprehensive treatments in Rockafellar and Wets~\cite{rockafellar2009variational}.

For a proper lower semicontinuous function $g: \mathbb{R}^d \to (-\infty, +\infty]$, its domain is defined as:
$
\operatorname{dom} g := \{ x \in \mathbb{R}^d : g(x) < +\infty \}.
$
Then the subdifferential of $g$ is defined as follows.
\begin{definition}[Subdifferentials]
    Let $g : \mathbb{R}^d \to (-\infty, +\infty]$ be a proper and lower semicontinuous function.

\begin{itemize}
    \item[(i)] For a given $x \in \mathrm{dom} \, g$, the Fréchet subdifferential of $g$ at $x$, written $\hat{\partial} g(x)$, is the set of all vectors $u \in \mathbb{R}^d$ that satisfy:
    \[
    \liminf_{y \to x,\, y \neq x} \frac{ g(y) - g(x) - \langle u, y - x \rangle }{ \| y - x \| } \geq 0.
    \]
    If $x \notin \mathrm{dom} \, g$, we set $\hat{\partial} g(x) = \emptyset$.

    \item[(ii)] The limiting subdifferential (or simply, the subdifferential) of $g$ at $x$, written $\partial g(x)$, is defined through the closure process:
    \[
    \partial g(x) := \left\{ u \in \mathbb{R}^d : \exists x^k \to x,\, g(x^k) \to g(x),\, \text{and } u^k \in \hat{\partial} g(x^k) \to u \text{ as } k \to \infty \right\}.
    \]

    \item[(iii)] The horizon subdifferential of $g$ at $x \in \mathrm{dom} \, g$, written $\partial^\infty g(x)$, is the set of all vectors $u \in \mathbb{R}^d$ for which there exist $x^k \in \mathbb{R}^d$, $u^k \in \partial g(x^k)$, and $\mathbb{R} \ni t_k \downarrow 0$ such that:
    \[
    (x^k, g(x^k), t_k u^k) \to (x, g(x), u) \quad \text{as } k \to \infty.
    \]
\end{itemize}
\end{definition}

Subdifferentials play a central role in nonsmooth nonconvex optimization by providing generalized gradients to establish optimality conditions. 
The next two theorems provide a fundamental connection between variational geometry and optimality conditions more generally. Firstly, if $g$ is differentiable, the first-order conditions for optimality are as follows~\cite[Theorem 6.12]{rockafellar2009variational}.
\begin{theorem}[Basic first-order conditions for optimality]
\label{first order for differentiable}
Consider the problem of minimizing a differentiable function \( g \) over a set \( C \subset \mathbb{R}^d \). A necessary condition for \( \bar{x} \) to be locally optimal is:
\begin{equation*}
    \langle \nabla g(\bar{x}), w \rangle \geq 0 \quad \text{for all } w \in T_C(\bar{x}),
\end{equation*}
which is the same as \( -\nabla g(\bar{x}) \in \hat{N}_C(\bar{x}) \) and implies:
\begin{equation*}
\label{eq: Nc condition}
    -\nabla g(\bar{x}) \in N_C(\bar{x}), \quad \text{or} \quad 0 \in \nabla g(\bar{x}) + N_C(\bar{x}),
\end{equation*}
where $T_C(\bar{x}), \hat{N}_C(\bar{x}),  N_C(\bar{x})$ are the tangent cone, normal cone, regular normal cone at $\bar{x}$, respectively.
\end{theorem}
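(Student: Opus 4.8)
The plan is to treat the three assertions separately, since only the first is substantive: the second restates it via the polarity between the tangent cone and $\hat{N}_C(\bar{x})$, and the third follows from a containment of cones. Throughout, let $\bar{x}$ be locally optimal, meaning $g(x) \geq g(\bar{x})$ for every $x \in C$ near $\bar{x}$.

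For the necessary tangent-cone inequality, I would fix an arbitrary $w \in T_C(\bar{x})$ and invoke the definition of the tangent cone to produce sequences $x^k \in C$ with $x^k \to \bar{x}$ and $t_k \downarrow 0$ satisfying $(x^k - \bar{x})/t_k \to w$. Local optimality gives $g(x^k) - g(\bar{x}) \geq 0$ for all large $k$, and differentiability of $g$ at $\bar{x}$ yields the first-order expansion
\[
g(x^k) - g(\bar{x}) = \langle \nabla g(\bar{x}),\, x^k - \bar{x} \rangle + o(\|x^k - \bar{x}\|).
\]
Dividing by $t_k > 0$ and letting $k \to \infty$, the scaled displacement converges to $w$ while the scaled remainder vanishes, so $\langle \nabla g(\bar{x}), w \rangle \geq 0$. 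As $w$ ranges over $T_C(\bar{x})$, this establishes the stated condition.

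For the remaining two claims I would argue by definition. Since $\hat{N}_C(\bar{x})$ is exactly the polar cone of $T_C(\bar{x})$, the membership $-\nabla g(\bar{x}) \in \hat{N}_C(\bar{x})$ means $\langle -\nabla g(\bar{x}), w \rangle \leq 0$ for all $w \in T_C(\bar{x})$, which is identical to the inequality just proved. The implication $-\nabla g(\bar{x}) \in N_C(\bar{x})$ then follows from the elementary inclusion $\hat{N}_C(\bar{x}) \subseteq N_C(\bar{x})$: the limiting normal cone is an outer limit of regular normal cones taken along sequences approaching $\bar{x}$ within $C$, and choosing the constant sequence $x^k \equiv \bar{x}$ shows every element of $\hat{N}_C(\bar{x})$ already belongs to $N_C(\bar{x})$; the equivalent form $0 \in \nabla g(\bar{x}) + N_C(\bar{x})$ is a mere rearrangement. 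The one point demanding care is the limit in the first step, namely confirming that $o(\|x^k - \bar{x}\|)/t_k \to 0$; this relies on the boundedness of $\|x^k - \bar{x}\|/t_k \to \|w\|$, a fact built into the tangent-cone scaling. This is the only genuine obstacle, and the remainder is bookkeeping with the definitions of the cones.
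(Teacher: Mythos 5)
Your proof is correct and follows the standard argument: the paper itself gives no proof of this statement, merely citing it as Theorem 6.12 of Rockafellar and Wets, and your derivation (first-order expansion along tangent-cone sequences, then the polarity $\hat{N}_C(\bar{x}) = T_C(\bar{x})^*$ and the inclusion $\hat{N}_C(\bar{x}) \subseteq N_C(\bar{x})$) is precisely the textbook route. The only caveat worth noting is a typo in the paper's statement, not in your argument: the labels "normal cone" and "regular normal cone" for $\hat{N}_C(\bar{x})$ and $N_C(\bar{x})$ are swapped relative to the convention you (correctly) use.
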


\Cref{first order for differentiable} will be used in the convergence analysis in \cref{Theoretical}. It provides the first-order necessary condition for minimizing a differentiable function over a constraint set. Furthermore, when $g$ is a proper, lower semi-continuous function, the corresponding first-order condition takes the following form~\cite[Theorem 8.15]{rockafellar2009variational}.

\begin{theorem}
\label{optimality relative to a set}
Consider the problem of minimizing a proper, lower semi-continuous function \( g: \mathbb{R}^d \to \overline{\mathbb{R}} \) over a closed set \( C \subset \mathbb{R}^d \). Let \( \bar{x} \) be a point of \( C \) at which the following constraint qualification is satisfied: the set \( \partial^\infty g(\bar{x}) \) contains no nonzero vector $v$ such that \( -v \in N_C(\bar{x}) \). Then, for \( \bar{x} \) to be locally optimal, it is necessary that
\[
\partial g(\bar{x}) + N_C(\bar{x}) \ni 0,
\]
where $N_C(\bar{x})$ is the normal cone of the set $C$ at $\bar{x}$, capturing the non-descent directions from $\bar{x}$ relative to $C$.
\end{theorem}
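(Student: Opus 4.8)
The plan is to convert the constrained minimization into an unconstrained one by absorbing the feasible set into the objective through an indicator function, and then to apply the generalized Fermat rule together with a limiting-subdifferential sum rule. Define the indicator $\delta_C(x) = 0$ if $x \in C$ and $\delta_C(x) = +\infty$ otherwise; since $C$ is closed, $\delta_C$ is proper and lower semicontinuous, and minimizing $g$ over $C$ is exactly the same as minimizing $f := g + \delta_C$ over $\mathbb{R}^d$. In particular, a point $\bar{x}$ that is locally optimal for the constrained problem is a local minimizer of $f$.

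First I would invoke the generalized Fermat rule \cite{rockafellar2009variational}: at a local minimizer $\bar{x}$ of the proper lower semicontinuous function $f$ one has $0 \in \hat{\partial} f(\bar{x}) \subseteq \partial f(\bar{x})$. This reduces the claim to the inclusion $\partial(g + \delta_C)(\bar{x}) \subseteq \partial g(\bar{x}) + N_C(\bar{x})$. Two elementary identifications for the indicator then close the gap once the sum is split: $\partial \delta_C(\bar{x}) = N_C(\bar{x})$ and $\partial^\infty \delta_C(\bar{x}) = N_C(\bar{x})$, both immediate from the definitions of the limiting and horizon subdifferentials applied to $\delta_C$ together with the definition of the normal cone.

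The decisive step is the sum rule for limiting subdifferentials: if $g$ and $\delta_C$ are proper lower semicontinuous and satisfy the transversality condition
\[
\partial^\infty g(\bar{x}) \cap \bigl(-\partial^\infty \delta_C(\bar{x})\bigr) = \{0\},
\]
then $\partial(g + \delta_C)(\bar{x}) \subseteq \partial g(\bar{x}) + \partial \delta_C(\bar{x})$. Substituting $\partial^\infty \delta_C(\bar{x}) = N_C(\bar{x})$, this transversality condition becomes: no nonzero $v \in \partial^\infty g(\bar{x})$ satisfies $-v \in N_C(\bar{x})$, which is precisely the constraint qualification assumed in the statement. Chaining the three facts yields $0 \in \partial f(\bar{x}) \subseteq \partial g(\bar{x}) + N_C(\bar{x})$, which is the desired necessary condition.

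I expect the sum rule to be the main obstacle, as it carries all of the genuinely nonconvex content. Limiting subdifferentials are not additive in general; the inclusion is obtained via a fuzzy (approximate) calculus that produces points $x^k \to \bar{x}$ and $z^k \to \bar{x}$ with $u^k \in \hat{\partial} g(x^k)$, $w^k \in \hat{\partial} \delta_C(z^k)$, and $u^k + w^k \to 0$, followed by a limiting argument. The constraint qualification is exactly what keeps $\{u^k\}$ and $\{w^k\}$ bounded: were a subsequence to diverge, rescaling it would produce a nonzero $v \in \partial^\infty g(\bar{x})$ with $-v \in \partial^\infty \delta_C(\bar{x}) = N_C(\bar{x})$, contradicting the hypothesis. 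Boundedness then lets me pass to convergent subsequences to obtain $u \in \partial g(\bar{x})$ and $w \in N_C(\bar{x})$ with $u + w = 0$. Since this is a standard result of variational analysis \cite{rockafellar2009variational}, in the paper I would state it with a citation; the sketch above records the route I would take to reconstruct the proof.
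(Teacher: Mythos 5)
Your argument is correct and is essentially the paper's own treatment of this statement: the paper does not prove it but cites it directly as Theorem 8.15 of Rockafellar and Wets, whose proof is exactly the route you sketch (rewrite the problem as minimizing $g+\delta_C$, apply the generalized Fermat rule, identify $\partial\delta_C(\bar{x})=\partial^\infty\delta_C(\bar{x})=N_C(\bar{x})$, and invoke the limiting-subdifferential sum rule under the stated horizon-cone qualification). Your remarks on why the qualification is needed — to keep the approximate subgradient sequences bounded in the fuzzy sum rule — are also accurate.
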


Based on \cref{optimality relative to a set}, we propose the definition of a KKT point for the problem~\cref{eq: general form}.

\begin{equation}
\label{eq: general form}
\begin{aligned}
\underset{X,Y\in C}{\min} \quad & \psi(X,Y),\\
\text{s.t.} \quad & AX + BY = 0.
\end{aligned}
\end{equation}

Here $\psi(X,Y)$ is a proper and lower semi-continuous function, $C$ is a closed set.

\begin{definition}
\label{def: kkt}
For problem \cref{eq: general form}, its augment Lagrangian $L_{\beta} = \psi(X,Y)+\langle AX+BY,\Lambda \rangle + \frac{\beta}{2}\|AX+BY\|_F^2 $, where $\Lambda$ is a Lagrangian multiplier. Point $(X^*,Y^*,\Lambda^*) $ is a KKT point if:
\begin{equation}
\label{eq: kkt condition}
\begin{aligned}
    & AX^* + BY^* = 0, \\
    &0 \in \partial_X \psi(X^*,Y^*) + A^T\Lambda^* + N_C(X^*),\\
    &0 \in \partial_Y \psi(X^*,Y^*) + B^T\Lambda^* + N_C(Y^*).\\
\end{aligned}
\end{equation}
\end{definition}

The $\ell_{1/2}$-norm in problem \cref{eq:l1/2-pro} is nonsmooth and nonconvex. To support the convergence analysis of our algorithm in \cref{Theoretical}, we examine its restricted prox-regularity property, first introduced by Wang et al.~\cite{yin2019global}. We begin by recalling the definition of this concept.
\begin{definition}
(Restricted prox-regularity) For a lower semi-continuous function $f$, let $E >1$, $f: \mathbb{R}^d \to \mathbb{R} \cup \{\infty\}$, and define the exclusion set:
\begin{equation*}
    S_E := \left\{ x \in \operatorname{dom}(f) : \| v \| > E, \; \forall \; v \in \partial f(x) \right\},
\end{equation*}

f is called restricted prox-regular if, for any $E >1$ and bounded set $T \subset \operatorname{dom}(f)$, there exists $\gamma >0$, such that: 
\begin{equation}
\label{eq: res-prox-regu}
        f(y) + \frac{\gamma}{2} \| x - y \|^2 \geq f(x) + \langle v, y - x \rangle, \quad 
    \forall \,  x \in T \setminus S_E, \; y \in T, \; v \in \partial f(x), \; \| v \| \leq E.
\end{equation}
\end{definition}
While~\cite{yin2019global} establishes restricted prox-regularity of $\ell_q(x) = \|x\|_q^q$ for $q \in (0,1)$, our convergence analysis in \cref{Theoretical} requires this property for the constrained function:
\[
\theta(x) := \eta \|x\|_q^q + \mathbb{I}_{[0,1]^d}(x)
\]
where $\mathbb{I}_{[0,1]^d}(x)$ denotes the indicator function of the box constraint. We now prove that $\theta$ is restricted prox-regularity,  building upon the framework established in \cite{yin2019global}.
\begin{proposition}
\label{pro:reg}
The function \( \theta(x) :=\eta \|x\|_{q}^{q} + \mathbb{I}_{[0,1]^d}(x) \), with \(x\in \mathbb{R}^d, q \in (0,1) \), is restricted prox-regular.
\end{proposition}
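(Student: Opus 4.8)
The plan is to exploit the separability of $\theta$ and reduce to a one-dimensional estimate. Writing $\theta(x)=\sum_{i=1}^{d}h(x_i)$ with $h(t):=\eta|t|^{q}+\mathbb{I}_{[0,1]}(t)$, both sides of \cref{eq: res-prox-regu} decouple across coordinates and $\partial\theta(x)=\partial h(x_1)\times\cdots\times\partial h(x_d)$, so any $v\in\partial\theta(x)$ with $\|v\|\le E$ obeys $v_i\in\partial h(x_i)$ and $|v_i|\le E$ for every $i$; moreover $T\subset\operatorname{dom}\theta=[0,1]^{d}$ is automatically bounded. Hence it suffices to exhibit a constant $\gamma=\gamma(E)>0$, independent of the coordinate, such that $h(s)+\frac{\gamma}{2}(t-s)^{2}\ge h(t)+v\,(s-t)$ for all $t,s\in[0,1]$ and all $v\in\partial h(t)$ with $|v|\le E$; summing over $i$ then recovers \cref{eq: res-prox-regu}. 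This is the same strategy used in \cite{yin2019global} for $\|\cdot\|_q^q$, now carried out in the presence of the box.

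First I would record $\partial h$ on $[0,1]$: the interior gives the singleton $\{\eta q\,t^{q-1}\}$, the upper endpoint gives $\partial h(1)=[\eta q,\infty)$ through the box normal cone, and the infinite slope of $|t|^{q}$ forces $\partial h(0)=\mathbb{R}$. The decisive observation is that on $(0,\delta_E)$, with $\delta_E:=(\eta q/E)^{1/(1-q)}$, the unique subgradient $\eta q\,t^{q-1}$ already exceeds $E$, so the requirement $v\in\partial h(t),\,|v|\le E$ is \emph{vacuous} there. Only the points $t\in\{0\}\cup[\delta_E,1]$ remain to be checked.

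For $t=0$ the subgradient ranges over $[-E,E]$ and, since $s\ge0$, the extreme case is $v=E$, reducing the claim to $\eta s^{q}+\frac{\gamma}{2}s^{2}\ge Es$ on $[0,1]$; I would split at $s_0:=(\eta/E)^{1/(1-q)}$, where $\eta s^{q}\ge Es$ holds on $[0,s_0]$ and $\frac{\gamma}{2}s^{2}\ge Es$ holds on $[s_0,1]$ once $\gamma\ge 2E/s_0$. For $t\in[\delta_E,1]$, where the extremal multiplier is $v=\eta q\,t^{q-1}$ (equal to $\eta q$ at $t=1$), I would analyse $F(s):=\eta s^{q}+\frac{\gamma}{2}(t-s)^{2}-\eta t^{q}-v(s-t)$, noting $F(t)=F'(t)=0$. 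Its second derivative $F''(s)=\gamma-\eta q(1-q)s^{q-2}$ changes sign exactly once, at $s_*:=(\eta q(1-q)/\gamma)^{1/(2-q)}$; taking $\gamma\ge\eta q(1-q)\delta_E^{q-2}$ forces $s_*\le\delta_E\le t$, so $F$ is convex on $[s_*,1]\ni t$ and the stationarity $F(t)=F'(t)=0$ yields $F\ge0$ there, in particular $F(s_*)\ge0$. On the complementary concave piece $[0,s_*]$ a concave function is minimized at an endpoint, and both $F(s_*)\ge0$ and $F(0)=\frac{\gamma}{2}t^{2}-\eta(1-q)t^{q}\ge0$ (valid once $\gamma\ge 2\eta(1-q)\delta_E^{q-2}$) finish the bound. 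Taking $\gamma(E)$ to be the maximum of the finitely many thresholds above completes the one-dimensional claim.

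I expect the step near $t=0$ to be the main obstacle. Because $|t|^{q}$ is concave with second derivative blowing up as $t\to0^{+}$, there is no uniform concavity modulus, and the naive mean-value estimate of $h(t)-h(s)$ cannot produce a single $\gamma$; this is exactly why restricted prox-regularity excludes $S_E$ in the first place. The box, however, reintroduces the boundary point $t=0$ and, worse, forces us to control target points $s$ near $0$ while the base point $t$ stays bounded away from it. The convex/concave splitting of $F$ at $s_*$ is the device that resolves this: convexity governs $s$ near $t$, while endpoint evaluation governs $s$ near $0$, and the observation that the genuinely singular region $(0,\delta_E)$ is rendered vacuous by the gradient bound $|v|\le E$ is what keeps the remaining cases uniform in $t$.
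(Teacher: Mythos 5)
Your proof is correct, and it takes a genuinely different (and cleaner) route than the paper's. The paper works in the multidimensional setting throughout: it partitions the base point $z=(z^1,z^2)$ into the block of entries equal to $1$ and the rest, splits on whether $\|z^1-y^1\|\le c$ or $>c$, projects $y^1$ onto the support of $z^1$, and bounds the near case by a second-derivative estimate along the segment and the far case by a crude $O(\|y^1-z^1\|^2/c^2)$ bound. You instead exploit separability to reduce everything to a single one-dimensional inequality for $h(t)=\eta|t|^q+\mathbb{I}_{[0,1]}(t)$, observe that the gradient bound $|v|\le E$ renders the singular region $(0,\delta_E)$ vacuous, and then dispatch the two remaining base points explicitly: $t=0$ via the split at $s_0=(\eta/E)^{1/(1-q)}$, and $t\in[\delta_E,1]$ via the convex/concave decomposition of the residual $F$ at $s_*$ together with endpoint evaluation ($F(0)\ge0$, $F(s_*)\ge0$). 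Both arguments hinge on the same two facts — the exclusion set $S_E$ removes small nonzero entries, and the curvature of $t^q$ is controlled once $t$ is bounded away from $0$ — but your version buys explicit, checkable constants, a fully rigorous treatment of the base point $t=0$ (which the paper handles only implicitly via the assertion that $z^1=0$ forces $y^1=0$, and of the endpoint $t=1$ via a rather opaque inequality for the $z^2$ block), and it avoids the distance-threshold case split entirely. One cosmetic remark: your extremal-multiplier reductions (to $v=E$ at $t=0$ and to $v=\eta q$ at $t=1$) are correct but worth a one-line justification in the write-up, since they rely on the sign of $s-t$ at the respective endpoints.
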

\begin{proof}
The limiting subdifferential of $\theta$ at $x$ is given by:
\[
\partial \theta(x) = \left\{ v = (v_1, \dots, v_d)^\top \middle|
\begin{aligned}
& v_i = \eta q x_i^{q-1} && \text{if } 0 < x_i < 1 \\
& v_i \in \mathbb{R} && \text{if } x_i = 0 \\
& v_i \in [\eta q, +\infty) && \text{if } x_i = 1
\end{aligned}
\right\}.
\]

This characterization follows from:
\begin{enumerate}
    \item The subdifferential decomposition $\partial(\eta \|x\|_q^q + \mathbb{I}_{[0,1]^d}(x)) = \prod_{i=1}^d \partial (\eta x_i^q + \mathbb{I}_{[0,1]}(x_i))$
    \item Pointwise application of the sum rule for limiting subdifferentials
    \item The subdifferential of $\mathbb{I}_{[0,1]}(x_i)$ is:
    \[
    \partial \mathbb{I}_{[0,1]}(x_i) = 
    \begin{cases} 
    (-\infty, 0] & x_i = 0,\\
    \{0\} & 0 < x_i < 1, \\
    [0, +\infty) & x_i = 1.
    \end{cases}
    \]

    The subdifferential of $x_i^q$ is: 
    \[
    \partial x_i^q = 
    \begin{cases} 
    qx_i^{q-1} & x_i > 0,\\
    R &  x_i =0. \\
    \end{cases}
    \]
\end{enumerate}

For any $E > 1$, define $\gamma = \max \left\{ \frac{4d(1+E)}{c^2}+2d\eta q ,\ q(1-q)c^{q-2}+2d\eta q \right\}$ with $c = \frac{1}{3} \left( \frac{\eta q}{E} \right)^{\frac{1}{1-q}}$. 
The exclusion set $S_E \supset \left\{ x : \min_{\substack{x_i \neq 0}} |x_i| \leq 3c \right\}$. 
For bounded set $T \subset \operatorname{dom} \theta$: 
If $T$ has no point with entries in $\{0,1\}$, \cref{eq: res-prox-regu} holds automatically. 
Else, for $z \in T \setminus S_E$, $v\in \partial\theta(z)$ and $y \in T$: 
Partition $z = (z^1, z^2)$ where $z^2$ contains all entries $z_i = 1$. 
Then partition $y = (y^1, y^2)$ and $v = (v^1, v^2)$ correspondingly. 

Assume $\|z^1 - y^1\| \leq c$. 
If $z^1 \neq 0$, then: 
\begin{equation*} 
\operatorname{supp}(z^1) \subseteq \operatorname{supp}(y^1), \quad
\|z^1\|_0 \leq \|y^1\|_0 ,
\end{equation*} 
where $\operatorname{supp}(\cdot)$ is the support set and $\|\cdot\|_0$ its cardinality. 
Define the projection: 
\[ 
y_i' = \begin{cases} y_i^1 & i \in \operatorname{supp}(z^1), \\ 0 & \text{otherwise}. \end{cases} 
\]

Then for any \( v^1 \in \partial \theta(z^1) \), the following holds:
\begin{equation}
\label{eq:y1-z1_1}  
\begin{aligned}
\|y^1\|_q^q - \|z^1\|_q^q - \langle v^1, y^1 - z^1 \rangle 
&\overset{(a)}{\geq} \|y'\|_q^q - \|z^1\|_q^q - \langle v^1, y' - z^1 \rangle \\
&\overset{(b)}{\geq} -\frac{q(1 - q)}{2} c^{q-2} \|z^1 - y'\|^2  \\
&\overset{(c)}{\geq} -\frac{q(1 - q)}{2} c^{q-2} \|z^1 - y^1\|^2,
\end{aligned}
\end{equation}
where (a) holds for \( \|y^1\|_q^q = \|y'\|_q^q + \|y^1 - y'\|_q^q \) by the definition of \( y' \),
(b) holds because \( \theta(x) \) is twice differentiable along the line segment connecting \( z^1 \) and \( y' \), and the second order derivative is no bigger than \( q(1 - q)c^{q-2} \), and (c) holds because
$\|z^1 - y^1\| \geq \|z^1 - y'\|.$ If $z^1=0$, then $y^1=0$ as well, and \cref{eq:y1-z1_1} is satisfied naturally. 

If \( \|z^1 - y^1\| > c \), then for any \( v^1 \in \partial \theta(z^1) \), we have
\begin{equation}
\label{eq:y1-z1_2}
\|y^1\|_q^q - \|z^1\|_q^q - \langle v^1, y^1 - z^1 \rangle \geq -(2n + 2nE) \geq -\frac{2d + 2dE}{c^2} \|y^1 - z^1\|^2.
\end{equation}

By the definition of $v^2$, we have: 
\begin{equation}
\label{eq:y2-z2-all}
   \theta(y^2) - \theta(z^2) \geq -\frac{d\eta q}{2} \langle v^2,y^2-z^2 \rangle,
\end{equation}
for $\theta$ is prox-regular at $x = 1$. Combing \cref{eq:y1-z1_1} , \cref{eq:y1-z1_2}  and \cref{eq:y2-z2-all} yields the result.
\end{proof}

We define the Hausdorff distance for bounded sets~\cite{bourkhissi2025convergence}, which is essential for the convergence analysis.
\begin{definition}[Hausdorff distance]
For bounded sets \( B, C \subset \mathbb{R}^d \), the Hausdorff distance is:
\begin{equation*}
\mathrm{dist}_H(B, C) := \max \left\{ 
\sup_{b \in B} \mathrm{dist}(b, C),\ 
\sup_{c \in C} \mathrm{dist}(c, B) 
\right\},
\end{equation*}
where $\mathrm{dist}(b, C) = \inf_{c \in C} \|b - c\|$ and $\mathrm{dist}(c, B) = \inf_{b \in B} \|c - b\|$.
\end{definition}

\begin{proposition}
\label{distH} 
Let set $C = \{x|Ax=b \}$.
For any \( \tau> 0 \), define the truncated normal cone mapping:
\begin{equation}
\label{nc mapping1}  
\bar{N}_{C}(x) = N_{C}(x) \cap \mathbb{B}_{\tau}, \quad \forall x \in C,
\end{equation}
where \( \mathbb{B}_{\tau} \subset \mathbb{R}^d \) is the closed ball centered at the origin with radius \( \tau \). Then
there exists \(\kappa > 0\) satisfying:
\[
\operatorname{dist}_H\left(\bar{N}_C(x), \bar{N}_C(x')\right) \leq \kappa \|x - x'\|, \quad \forall x, x' \in C.
\]
\end{proposition}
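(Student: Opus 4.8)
The plan is to exploit the affine structure of $C$. The crucial observation is that, for an affine set, the normal cone is independent of the base point, so the truncated normal cone mapping $\bar{N}_C(\cdot)$ is actually \emph{constant}; the left-hand side $\operatorname{dist}_H(\bar{N}_C(x),\bar{N}_C(x'))$ then vanishes identically and the inequality holds vacuously. Thus the real content of the proof is simply to establish this constancy.

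Concretely, I would first fix any $x_0 \in C$ and write $C = x_0 + L$, where $L := \ker A = \{d : Ad = 0\}$ is a linear subspace, so that $C$ is a translate of $L$. Since $C$ is convex, the limiting normal cone coincides with the convex normal cone, and $v \in N_C(x)$ iff $\langle v, y - x\rangle \le 0$ for all $y \in C$, i.e.\ iff $\langle v, d\rangle \le 0$ for all $d \in L$. Because $L$ is a subspace (so $d \in L \Rightarrow -d \in L$), this one-sided condition forces $\langle v, d\rangle = 0$ for all $d \in L$, i.e.\ $v \in L^\perp = (\ker A)^\perp = \operatorname{range}(A^\top)$. Hence $N_C(x) = \operatorname{range}(A^\top)$ for every $x \in C$, with no dependence on $x$.

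It follows immediately that $\bar{N}_C(x) = \operatorname{range}(A^\top) \cap \mathbb{B}_\tau$ is the same set for all $x \in C$, so $\operatorname{dist}_H\!\bigl(\bar{N}_C(x),\bar{N}_C(x')\bigr) = 0$ for every pair $x, x' \in C$, and the claimed estimate holds with \emph{any} constant $\kappa > 0$. The only step requiring care is the constancy identity $N_C(x) = \operatorname{range}(A^\top)$, which is exactly the standard fact that the normal cone of an affine subspace equals the orthogonal complement of its direction space; this is essentially the sole step and presents no genuine obstacle. I would note that the Lipschitz estimate becomes substantive only when $C$ is a general polyhedron with active inequality constraints, where the normal cone does depend on the base point and a Hoffman/Robinson-type calmness bound for polyhedral multifunctions is needed; for the purely affine $C$ considered here the bound is trivial.
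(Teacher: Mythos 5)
Your proposal is correct, and it takes a genuinely different route from the paper: the paper offers no argument at all for this proposition, simply deferring to the cited reference \cite{bourkhissi2025convergence}, where the Lipschitz-type estimate is established for general polyhedral sets via a calmness/Hoffman-type bound. Your observation that for the purely affine set $C=\{x: Ax=b\}$ the normal cone is $N_C(x)=(\ker A)^{\perp}=\operatorname{range}(A^{\top})$ at \emph{every} $x\in C$ is exactly right (the set is convex, so the limiting and convex normal cones agree, and the subspace structure of $\ker A$ turns the one-sided inequality into an equality), and it immediately yields that $\bar{N}_C(\cdot)$ is a constant mapping, so $\operatorname{dist}_H(\bar{N}_C(x),\bar{N}_C(x'))=0$ and any $\kappa>0$ works. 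What your approach buys is a self-contained, two-line proof and the sharper conclusion $\kappa=0$ (in effect), which also makes transparent that the truncation by $\mathbb{B}_{\tau}$ plays no role here; what the citation-based approach buys is generality, covering polyhedra with active inequality constraints where the estimate is substantive --- but as stated the proposition only concerns the affine case, so your argument fully suffices and is arguably preferable for the reader.
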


The proof follows directly from~\cite{bourkhissi2025convergence}.

\section{\(\ell_{1/2}\)-regularized model}
\label{sec: model}
This section establishes theoretical conditions under which the proposed regularization remains equivalent to the original formulation and investigates the binary property of the KKT points of problem~\cref{eq:l1/2-pro}. Our analysis is partly inspired by the approach in~\cite{lpjiangbo}, where the authors study a smoothed $\ell_{1/2}$-regularized problem with a similar structure.  Define $f_1(X) = \frac{1}{2} \langle A,\, XX^{\top} \rangle + \langle G,\, X \rangle$, $f_2(X) = \sum_{i=1}^n\sum_{j=1}^m|X_{ij}|^{1/2}$, $F(X) = f_1(X) + \eta f_2(X)$.  
We begin with the following key propositions:
\begin{proposition}
\label{lem: equivalent1}
Let $X^*$ be feasible for the problem \cref{eq:l1/2-pro}. If $X^*$ has non-integer entries, then there exists a cycle $\mathcal{C} = \{ X_{i_1j_1}^*, X_{i_1j_2}^*, X_{i_2j_2}^*, \dots, X_{i_sj_s}^*, X_{i_sj_1}^* \}$ with $2s \leq 2m$ entries in $(0,1)$.
\end{proposition}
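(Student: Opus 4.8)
The plan is to encode the fractional structure of $X^*$ as a bipartite graph and then exhibit the cycle as an even closed walk in that graph. Construct a bipartite graph $\mathcal{G}$ whose two vertex classes are the $n$ row indices and the $m$ column indices, and place an edge between row $i$ and column $j$ precisely when $X_{ij}^* \in (0,1)$. By hypothesis $X^*$ has a non-integer entry, so $\mathcal{G}$ has at least one edge. Any cycle in $\mathcal{G}$ necessarily alternates between row and column vertices, and reading off its edges as entries of $X^*$ reproduces exactly the alternating pattern $X_{i_1 j_1}^*, X_{i_1 j_2}^*, X_{i_2 j_2}^*, \dots, X_{i_s j_s}^*, X_{i_s j_1}^*$; hence it suffices to prove that $\mathcal{G}$ contains a cycle.

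The core of the argument is to show that every vertex incident to an edge of $\mathcal{G}$ has degree at least two, which rests on the integrality of the line sums. First I would prove the row claim: if $X_{ij}^* \in (0,1)$, then some other entry of row $i$ also lies in $(0,1)$. Indeed, the constraint $X^* \mathbf{1}_m = \mathbf{1}_n$ gives $\sum_{k} X_{ik}^* = 1$, so $\sum_{k \ne j} X_{ik}^* = 1 - X_{ij}^* \notin \mathbb{Z}$; were all these remaining entries in $\{0,1\}$ their sum would be an integer, a contradiction. An identical argument using the column constraint $\mathbf{1}_n^{\top} X^* = b\,\mathbf{1}_m^{\top}$, together with the fact that $b = n/m$ is a positive integer, shows that a fractional entry in column $j$ forces a second fractional entry in the same column. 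Consequently every vertex of $\mathcal{G}$ has degree $0$ or at least $2$.

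With the minimum-degree property in hand, I would invoke the standard fact that a finite graph in which every non-isolated vertex has degree at least two must contain a cycle (e.g.\ a maximal path has an endpoint whose neighbors all lie on the path, producing a back edge and hence a cycle). Since $\mathcal{G}$ is bipartite, this cycle has even length $2s$ and visits $s$ distinct rows $i_1, \dots, i_s$ and $s$ distinct columns $j_1, \dots, j_s$, yielding $2s$ entries of $X^*$ that all lie in $(0,1)$. Counting the distinct columns used gives $s \le m$, hence $2s \le 2m$, which is the stated bound.

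The main obstacle I anticipate is expository rather than mathematical: one must translate cleanly between the graph-theoretic cycle and the index notation of the statement, and verify that the distinctness of the rows and columns, needed both for a genuine cycle and for the $s \le m$ bound, is built into the definition of a cycle. The integrality step itself is short, but it is the crux of the proof: it is precisely because the row sums equal the integer $1$ and the column sums equal the integer $b$ that no fractional entry can sit in isolation, and this is exactly what supplies the branching required to close up a cycle.
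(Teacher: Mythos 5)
Your proof is correct, and it reaches the same destination by a slightly more abstract route than the paper. The shared crux is identical in both arguments: because each row sums to the integer $1$ and each column sums to the integer $b$, a fractional entry can never be alone in its line, so fractional entries always branch. Where you differ is in how the cycle is then extracted. The paper performs an explicit walk: starting from one fractional entry it alternately hops along rows and columns, extending a path $\mathcal{C}'_s$ and checking at each step whether the new row revisits an earlier column, closing the cycle by the pigeonhole argument once all $m$ columns would otherwise be exhausted (this is also where its bound $2s \le 2m$ comes from). You instead encode the fractional support as a bipartite graph, observe that every non-isolated vertex has degree at least two, and invoke the standard lemma that such a graph contains a cycle, which is automatically of even length $2s$ with $s$ distinct columns, giving $s \le m$. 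Your packaging buys a cleaner argument — it avoids the paper's somewhat delicate case analysis during path extension and makes the distinctness of the rows and columns in the cycle immediate from the definition of a graph cycle — at the cost of appealing to an external (though entirely standard) graph-theoretic fact rather than constructing the cycle by hand. Both proofs are complete and correct.
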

\begin{proof}
Assume $X_{i_1j_1}^* \in (0,1)$. Since $\sum_j X_{i_1j}^* =1$, there exists $ j_2 \neq j_1$ with $X_{i_1j_2}^* \in (0,1)$.
Similarly, $\sum_i X_{ij_2}^* =b$ implies $\exists i_2 \neq i_1$ with $X_{i_2j_2}^* \in (0,1)$.
If $X_{i_2j_1}^* \in (0,1)$, we obtain the cycle $\mathcal{C} = (X_{i_1j_1}^*, X_{i_1j_2}^*, X_{i_2j_2}^*, X_{i_2j_1}^*)$.

Otherwise, assume that a fractional cycle $\mathcal{C}'_s = \{ X_{i_1j_1}^*, X_{i_1j_2}^*, \dots, X_{i_sj_s}^* \}$ is constructed. If $\exists t \in \{1,\dots,s-1\}$ with $X_{i_sj_t}^* \in (0,1)$, then the closed cycle $\mathcal{C} = \{ X_{i_tj_t}^*, X_{i_tj_{t+1}}^*, \dots, X_{i_sj_s}^*, X_{i_sj_t}^* \}$ satisfies \cref{lem: equivalent1}. 
Else, the row-sum constraint $\sum_j X_{i_sj}^* =1$ implies $\exists j_{s+1} \neq j_s$ with $X_{i_sj_{s+1}}^* \in (0,1)$, extending $\mathcal{C}_s'$ to include $X_{i_sj_{s+1}}$.
If $\exists t \in \{1,\dots,s-1\}$ with $X_{i_tj_{s+1}}^* \in (0,1)$, the cycle $\mathcal{C} = \{ X_{i_tj_{s+1}}^*, X_{i_tj_{t+1}}^*\dots, X_{i_sj_{s+1}}^*\}$ satisfies \cref{lem: equivalent1}.
Else, extend $\mathcal{C}_{s}'$ to $\mathcal{C}_{s+1}'$.

When extended to $\mathcal{C}_m = \{ X_{i_1j_1}^*, \dots, X_{i_mj_m}^* \}$, the column indices $\{j_1,\dots,j_m\}$ form a permutation of $\{1,\dots,m\}$. By row $i_m$'s constraint $\sum_j X_{i_mj}^* =1$, $\exists t \in \{1,\dots,m-1\}$ with $X_{i_mj_t}^* \in (0,1)$. Thus, the cycle $\mathcal{C} = \{ X_{i_tj_t}^*, X_{i_tj_{t+1}}^*, \dots, X_{i_mj_m}^*, X_{i_mj_t}^* \}$ satisfies \cref{lem: equivalent1}. 
\end{proof}

\begin{proposition}
\label{pro:concave}
   Let $\nu_{f_1} = \lambda_{\text{max}}(A)$, $\nu_{f_2} = \frac{1}{4}$, where $\lambda_{\text{max}}(A)$ is the maximum eigenvalue of $A$.  Then when $\eta > 4\lambda_{\text{max}}(A)$, the function $F(X)$ is strongly concave with positive parameter $\eta \nu_{f_2} - \nu_{f_1}$ over $\mathcal{F}_2$.
\end{proposition}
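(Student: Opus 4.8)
The plan is to reduce the claim to the concavity of an auxiliary function and then exploit the separable structure of the regularizer. Recall that $F$ being strongly concave over the convex set $\mathcal{F}_2$ with parameter $\mu := \eta\nu_{f_2} - \nu_{f_1}$ means precisely that $H(X) := F(X) + \frac{\mu}{2}\|X\|_F^2$ is concave on $\mathcal{F}_2$. The key algebraic observation I would use is that $\frac{\mu}{2} = \frac{\eta \nu_{f_2}}{2} - \frac{\nu_{f_1}}{2}$, which lets me split $H$ as
\begin{equation*}
H(X) = \Big[f_1(X) - \tfrac{\nu_{f_1}}{2}\|X\|_F^2\Big] + \eta\Big[f_2(X) + \tfrac{\nu_{f_2}}{2}\|X\|_F^2\Big],
\end{equation*}
so it suffices to prove that each bracketed term is concave on $\mathcal{F}_2$.

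For the first bracket I would argue via the Hessian. Writing $\langle A, XX^\top\rangle = \sum_{j} X_{\cdot j}^\top A X_{\cdot j}$ in terms of the columns $X_{\cdot j}$ of $X$, the (vectorized) Hessian of $f_1$ is $I_m \otimes A$, whose largest eigenvalue is $\lambda_{\max}(A) = \nu_{f_1}$. Hence $\nabla^2\big(f_1 - \frac{\nu_{f_1}}{2}\|\cdot\|_F^2\big) = I_m\otimes A - \nu_{f_1} I \preceq 0$ everywhere, so the first bracket is concave (the linear term $\langle G, X\rangle$ is irrelevant to curvature).

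The second bracket is where the work lies, and I would exploit separability. On $\mathcal{F}_2$ every entry satisfies $0 \le X_{ij} \le 1$, so $|X_{ij}| = X_{ij}$ and $f_2(X) + \frac{\nu_{f_2}}{2}\|X\|_F^2 = \sum_{i,j} g(X_{ij})$ with $g(t) = t^{1/2} + \frac{1}{8}t^2$ (using $\nu_{f_2} = \tfrac14$). Since a sum of concave functions of the individual coordinates is concave, it reduces to showing $g$ is concave on $[0,1]$. For $t \in (0,1]$ one computes $g''(t) = -\frac14 t^{-3/2} + \frac14 \le 0$, with equality only at $t = 1$; concavity then extends to the closed interval $[0,1]$ by continuity of $g$ at $t = 0$. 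Combining the two concave brackets shows $H$ is concave, i.e. $F$ is strongly concave over $\mathcal{F}_2$ with parameter $\mu = \eta\nu_{f_2} - \nu_{f_1}$, which is positive exactly when $\eta > 4\lambda_{\max}(A)$.

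I expect the main obstacle to be handling the nonsmoothness of $f_2$ at the boundary of the box: $t\mapsto t^{1/2}$ has an unbounded derivative at $t=0$ and is not twice differentiable there, so a single global Hessian argument for $F$ does not apply. The separable reformulation sidesteps this, replacing the $nm$-dimensional problem with the scalar concavity of $g$ on $[0,1]$ together with a continuity argument at $t=0$; the cancellation $\frac{\nu_{f_2}}{2} = \frac18$ against the $-\frac14 t^{-3/2}$ curvature of the square root at $t=1$ is exactly what pins down the threshold $\eta = 4\lambda_{\max}(A)$.
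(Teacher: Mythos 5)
Your proof is correct, and it follows the same basic decomposition as the paper: split $F = f_1 + \eta f_2$, bound the curvature of each summand separately, and add. The difference is in how the curvature bounds are certified. The paper works with secant (convex-combination) inequalities, deriving the one for $f_1$ from the descent lemma with modulus $\nu_{f_1}$ and simply asserting that $f_2$ "is known to be strongly concave with parameter $\tfrac14$"; you instead pass to the equivalent quadratic-shift characterization, handle $f_1$ by the Hessian $I_m\otimes A \preceq \lambda_{\max}(A) I$, and, more substantively, actually \emph{prove} the $f_2$ claim by reducing to the scalar function $g(t)=t^{1/2}+\tfrac18 t^2$ on $[0,1]$ and checking $g''(t)=-\tfrac14 t^{-3/2}+\tfrac14\le 0$ on $(0,1]$, with the continuity argument at $t=0$. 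That last step is a genuine addition: it supplies the justification the paper omits, correctly restricts the claim to the box $[0,1]^{nm}\supseteq\mathcal{F}_2$ (where $\tfrac14$ is the right modulus, attained at $t=1$), and makes transparent why the threshold is exactly $\eta=4\lambda_{\max}(A)$. No gaps.
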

\begin{proof}
For function \( f_1(X) \), by the second-order Taylor expansion and the definition of \( \nu_{f_1} \), we have:
    \begin{equation}
    \label{eq:taylor1}
        f_1(Y) - f_1(X) - \langle \nabla f_1(X), Y - X \rangle \leq \frac{\nu_{f_1}}{2} \|Y - X\|_{\text{F}}^2, \quad \forall X, Y \in \mathcal{F}_2.
    \end{equation}
    
    Using the convex combination \( Z = tX + (1-t)Y \) for \( t \in [0,1] \) and substituting into \cref{eq:taylor1}, we derive the inequality for the convex combination of \( f_1 \):
    \begin{equation}
    \label{eq:taylor2}
        f_1(tX + (1-t)Y) \geq t f_1(X) + (1-t) f_1(Y) - \frac{\nu_{f_1}}{2} t(1-t) \|Y - X\|_{\text{F}}^2, ~\forall X, Y \in \mathcal{F}_2, \, \forall t \in [0,1].
    \end{equation}
    
    For function \( f_2(X) \), it is known to be strongly concave with the parameter \( \nu_{f_2} \), which implies:
    \begin{equation}
    \label{eq:taylor3}
        f_2(tX + (1-t)Y) \geq t f_2(X) + (1-t) f_2(Y) + \frac{\nu_{f_2}}{2} t(1-t) \|Y - X\|_{\text{F}}^2,~ \forall X, Y \in \mathcal{F}_2, \, \forall t \in [0,1].
    \end{equation}
    
    Multiplying \cref{eq:taylor3} by \( \eta \) yields that for $\forall X, Y \in \mathcal{F}_2, \, \forall t \in [0,1]$,
    \begin{equation}
    \label{eq:taylor4}
        \eta f_2(tX + (1-t)Y) \geq \eta \left( t f_2(X) + (1-t) f_2(Y) + \frac{\nu_{f_2}}{2} t(1-t) \|Y - X\|_{\text{F}}^2 \right).
    \end{equation}
    
    Summing \cref{eq:taylor2} and \cref{eq:taylor4} and using the definition \( F(X) = f_1(X) + \eta f_2(X) \), we obtain:
    \begin{align}
    \label{eq:final}
    F(tX + (1-t)Y) &\geq t F(X) + (1-t) F(Y) + \frac{t(1-t)}{2} \left( \eta \nu_{f_2} - \nu_{f_1} \right) \|Y - X\|_{\text{F}}^2,
    \end{align}
for $\forall X, Y \in \mathcal{F}_2, \, \forall t \in [0,1]$.
     Thus, \( F(X) \) is strongly concave over \( \mathcal{F}_2 \) with positive parameter \( \eta \nu_{f_2} - \nu_{f_1} \).
\end{proof}
\begin{theorem}[Equivalence between Problem \cref{eq:ori-pro} and Problem \cref{eq:l1/2-pro}]
\label{thm:penalty} 
If $\eta > 4\|A\|_\infty$, then:
\begin{enumerate}
    \item Any local minimizer of problem \cref{eq:l1/2-pro} is feasible and locally optimal for problem \cref{eq:ori-pro}. On the other hand, each feasible point of problem \cref{eq:ori-pro} is a local minimizer of problem \cref{eq:l1/2-pro}.
    \item Problem \cref{eq:ori-pro} and problem \cref{eq:l1/2-pro} share identical globally optimal solutions.
\end{enumerate}
\end{theorem}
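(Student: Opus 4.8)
The plan is to derive both items from two structural facts together with the concavity already available. First, under the standing hypothesis $\eta > 4\|A\|_\infty$ we have (recalling $\lambda_{\max}(A)\le\|A\|_\infty$ for the symmetric positive semidefinite $A$) that $\eta > 4\lambda_{\max}(A)$, so \cref{pro:concave} applies and $F = f_1 + \eta f_2$ is strongly concave on $\mathcal{F}_2$. Second, on $\mathcal{F}_1$ every entry is $0$ or $1$, hence $f_2(X) = \sum_{ij} X_{ij}^{1/2} = \sum_{ij} X_{ij} = n$ is constant, so $F \equiv f_1 + \eta n$ on $\mathcal{F}_1$: the two objectives differ only by an additive constant on the binary feasible set. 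I will also use that, by Hoffman--Kruskal integrality, the extreme points of $\mathcal{F}_2$ are exactly the matrices of $\mathcal{F}_1$.

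For the first assertion of item~1 (a local minimizer $X^*$ of \cref{eq:l1/2-pro} is feasible for \cref{eq:ori-pro}), I argue by contradiction. If $X^*$ had a fractional entry, \cref{lem: equivalent1} supplies a cycle $\mathcal{C}$ whose entries lie in $(0,1)$; letting $E$ be the $\{0,\pm1\}$ matrix equal to $+1,-1$ alternately along $\mathcal{C}$ and $0$ elsewhere, the row/column alternation gives $E\mathbf{1}_m = 0$ and $\mathbf{1}_n^\top E = 0$, so $X^* \pm \epsilon E \in \mathcal{F}_2$ for all small $\epsilon > 0$. The map $\epsilon \mapsto F(X^* + \epsilon E)$ is strongly concave, whence $F(X^*) > \frac{1}{2} F(X^*+\epsilon E) + \frac{1}{2} F(X^*-\epsilon E)$, so at least one of $X^* \pm \epsilon E$ strictly decreases $F$, contradicting local optimality. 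Hence $X^* \in \mathcal{F}_1$. Local optimality of $X^*$ for \cref{eq:ori-pro} then follows by restricting the neighborhood inequality $F(X^*) \le F(X)$ to $X \in \mathcal{F}_1 \subset \mathcal{F}_2$ and subtracting the constant $\eta n$.

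For the converse of item~1 (every $X^* \in \mathcal{F}_1$ is a local minimizer of \cref{eq:l1/2-pro}), I examine a feasible perturbation $D := X - X^*$ with $X \in \mathcal{F}_2$ near $X^*$, so $D\mathbf{1}_m = 0$, $\mathbf{1}_n^\top D = 0$, and $0 \le X^* + D \le 1$. The crucial combinatorial observation is that $D$ cannot be supported only on the $1$-entries of $X^*$: since each row of $X^*$ carries a single $1$, if $D$ vanished on all $0$-entries the zero-row-sum condition would force $D = 0$. Thus some $0$-entry $(i,j)$ has $D_{ij} > 0$, and tracking the largest-magnitude entry through the zero-sum constraints yields $\sum_{X^*_{ij}=0} \sqrt{D_{ij}} \ge \sqrt{\|D\|_\infty/(m-1)}$. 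As $f_1$ is smooth, $f_1(X^*+D) - f_1(X^*) = O(\|D\|_\infty)$, whereas the increment of $\eta f_2$ contains the nonnegative term $\eta \sum_{X^*_{ij}=0} \sqrt{D_{ij}}$ of order $\sqrt{\|D\|_\infty}$, which dominates for $\|D\|_\infty$ small. Hence $F(X^*+D) > F(X^*)$ for all small nonzero feasible $D$, so $X^*$ is a strict local minimizer.

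Finally, item~2 follows by combining item~1 with the constant-gap identity; global minimizers of both problems exist by continuity and compactness. A global minimizer $X^*$ of \cref{eq:l1/2-pro} is binary by item~1, and for every $X \in \mathcal{F}_1$ the inequality $F(X^*) \le F(X)$ reduces, after removing $\eta n$, to $f_1(X^*) \le f_1(X)$, so $X^*$ solves \cref{eq:ori-pro}; conversely any minimizer of \cref{eq:ori-pro} attains the same minimal $f_1$-value and hence the same $F$-value, and since $X^*$ minimizes $F$ over all of $\mathcal{F}_2$ so does it. I expect the converse of item~1 to be the main obstacle: it is precisely where the $\ell_{1/2}$ kink and the assignment geometry must be shown to interact, namely that the constraints force every feasible move to activate a zero entry so that the $\sqrt{\cdot}$ growth outweighs the smooth first-order change, a mechanism the $\ell_1$ penalty cannot supply since it is constant on $\mathcal{F}_2$.
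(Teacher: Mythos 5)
Your proof is correct, but both halves of item~1 are argued by a genuinely different route than the paper's. For the forward direction, the paper also starts from the fractional cycle of \cref{lem: equivalent1}, but then writes down the pseudo-Hessian $H$ of the reduced problem along the cycle and shows $v^\top H v<0$ for the alternating direction $v$ when $\eta\ge 4\|A\|_\infty$, contradicting the second-order necessary condition; you instead evaluate $F$ at the two points $X^*\pm\epsilon E$ and invoke the strong-concavity midpoint inequality of \cref{pro:concave} (using $\lambda_{\max}(A)\le\|A\|_\infty$, which the paper also uses), which reaches the same contradiction without any Hessian computation and without worrying about the precise form of a second-order condition for this nonsmooth objective. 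For the converse, the paper enumerates all $\tfrac{n!}{(b!)^m}-1$ directions $D^l$ toward the other binary feasible points, shows each is strictly increasing on $(0,\tilde t^{\,l})$ via \cref{eq:deltaf1}--\cref{eq:deltaf2}, and then uses strong concavity to propagate the strict increase to the whole convex hull $\mathrm{Conv}(\bar X,\tilde t)$ containing a feasible neighborhood; your argument is local and direct, bounding an arbitrary feasible perturbation $D$ by showing the zero-sum constraints force a zero entry of $X^*$ to activate with $D_{ij}\gtrsim\|D\|_\infty/(m-1)$, so that the $\eta\sqrt{\cdot}$ gain of order $\sqrt{\|D\|_\infty}$ beats the $O(\|D\|_\infty)$ smooth change. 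This avoids the exponential enumeration and, notably, reveals that the converse holds for every $\eta>0$, the threshold being needed only for the forward direction. Two small points you should make explicit: the decrease of $f_2$ at the $1$-entries of $X^*$ must be bounded, e.g.\ via $\sqrt{1+D_{ij}}-1\ge D_{ij}\ge-\|D\|_\infty$ summed over the $n$ ones, so that it is absorbed into the $O(\|D\|_\infty)$ term; and a one-line verification that the alternating $\pm1$ assignment along the cycle cancels in every row and column, so $E\mathbf 1_m=0$ and $\mathbf 1_n^\top E=0$. Item~2 is handled identically to the paper.
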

\begin{proof}
Let $X^*$ be a local minimizer of problem \cref{eq:l1/2-pro}.
By \cref{lem: equivalent1}, if $X^*$ has fractional entries, there exists a cycle $\mathcal{C} = \{ X_{i_1j_1}^*, \dots, X_{i_sj_s}^*, X_{i_sj_1}^* \}$ with each entry in $(0,1)$. Fixing entries outside $\mathcal{C}$, consider the reduced problem. The direction $v = (1,-1,1,-1,\dots)^\top$ lies in the tangent cone. The pseudo-Hessian matrix is:\\
{\scriptsize
\begin{equation*}
H = 
     \begin{pmatrix}
 A_{i_1i_1} &  &   &  &  &  &  A_{i_1i_s}\\
  & A_{i_1i_1} & A_{i_1i_2}  &   & &  & \\
  & A_{i_2i_1} & A_{i_2i_2} &  &  & &\\
  &  &  &  \ddots &  &   &   \\
  &  &  &  & A_{i_{s-1}i_{s-1}} & A_{i_{s-1}i_s} & \\
    &  &  &  & A_{i_{s}i_{s-1}} & A_{i_{s}i_s} & \\
    A_{i_si_1} & & & & & &  A_{i_si_s}
  
     \end{pmatrix}  - \frac{\eta}{4} 
     \begin{pmatrix}
         X_{i_1i_1}^{-\frac{3}{2}}&  &  &  &  \\
         & X_{i_1j_2}^{-\frac{3}{2}} &  &  &\\
         &  &  \ddots &  &\\
         &  &  &  X_{i_si_s}^{-\frac{3}{2}} \\
         &  &  &  &  X_{i_sj_1}^{-\frac{3}{2}}
     \end{pmatrix}.
 \end{equation*}
}
Thus, when \( \eta \geq 4 \| A \|_{\infty} \), the following holds:  
\begin{equation*}
    v^T H v \leq 2\sum_{t=1}^s |A_{i_ti_t}| + 2\sum_{t=1}^s |A_{i_ti_{t+1}}| - \frac{\eta}{4} \left(\sum_{t=1}^s X_{i_ti_t}^{-\frac{3}{2}} + \sum_{t=1}^sX_{i_ti_{t+1}}^{-\frac{3}{2}}\right)
    < 2s \| A \|_{\infty} - \frac{s}{2} \eta \leq 0,
\end{equation*}  
contradicting the second-order necessary condition. Thus $X^*$ doesn't have fractional entries. Since \cref{eq:ori-pro} has a discrete domain, $X^*$ is locally optimal for the problem \cref{eq:ori-pro}.

Let $\bar{X}$ be a feasible point of the problem \cref{eq:ori-pro} and $X^l,~ l = 1,2, \dots, \frac{n!}{(b!)^m } -1$ denote all remaining feasible points of the problem \cref{eq:ori-pro}.
For any fixed $l$, consider the feasible direction $D^l = X^l -\bar{X}$. It follows that $\bar{X} + t D^l \in \mathcal{F}_2$ for all $t \in [0,1]$. Define:
\begin{equation}
    \begin{aligned}
        & U_1^l = \{(i,j) \mid \bar{X}_{ij} = 0 , X_{ij}^l =0\}, \quad U_2^l = \{(i,j) \mid \bar{X}_{ij} = 0 , X_{ij}^l =1 \},  \\
        & U_3^l = \{(i,j) \mid \bar{X}_{ij} = 1 , X_{ij}^l =0\}, \quad U_4^l = \{(i,j) \mid \bar{X}_{ij} = 1 , X_{ij}^l =1\}. \\
    \end{aligned}
\end{equation}

Clearly, we can see that $|U_1^l| \geq 0,~|U_4^l| \geq 0$ and $|U_2^l| = |U_3^l| \geq 2, ~ \|D^l\|_F^2 = 2|U_2^l| \geq 4$. By the mean-value theorem, for any $t\in (0,1]$, there exists $\xi \in (0,1)$, such that:
\begin{equation}
\label{eq:deltaf1}
\begin{aligned}
    f_1(\bar{X} + t D^l) - f_1(\bar{X}) & =  t \langle \nabla f_1 (\bar{X} + \xi t D^l) , D^l \rangle \\
    & \geq -t \| D^l\|_F \| \nabla f_1 (\bar{X} + \xi t D^l) \|_F \\
    & \geq -t \| D^l\|_F \left(\nu_{f_1} \| \bar{X} + \xi t D^l \|_F + \|G\|_F \right)\\
    & \geq - t \| D^l\|_F  \left( \nu_{f_1}\left(\|\bar{X}\|_F + \xi t \| D^l\|_F \right) + \|G\|_F  \right)\\
    & \geq - t \| D^l\|_F  \left( \nu_{f_1}\left(\sqrt{n} +  \| D^l\|_F\right) + \|G\|_F  \right),\\
\end{aligned}
\end{equation}
where the second inequality follows from the Lipschitz continuity of $\nabla f_1(X)$ with parameter $\nu_{f_1}$, and the last inequality uses $|\bar{X}|_F = \sqrt{n}$.

For any $t \in (0, \frac{1}{2}]$, we have:  
\begin{equation}
\label{eq:deltaf2}
\begin{aligned}
    f_2(\bar{X} + t D^l) - f_2(\bar{X}) &= \sum_{(i,j) \in \bigcup_{s=1}^4 U_s^l} \left( (\bar{X}_{ij} + t D_{ij}^l)^{1/2} - \bar{X}_{ij}^{1/2} \right) \\
    &= |U_2^l| t^{1/2} + |U_3^l| \left( (1-t)^{1/2} - 1 \right) \\
    &\geq \frac{t}{2} \|D^l\|_F^2 \left( t^{-1/2} - \sqrt{2} \right),
\end{aligned}
\end{equation}
where the last inequality is due to $\|D^l\|_F^2 = 2|U_2^l|$ and the Lagrange mean-value theorem.

Define 
\[
\tilde{t}^l = \left( \frac{\eta \|D^l\|_F}{2 \left( \nu_{f_1} \left( \sqrt{n} + \|D^l\|_F \right) + \|G\|_F \right) + \sqrt{2}\eta \|D^l\|_F} \right)^2 < \frac{1}{2},
\]
and let $\tilde{t} = \min\left\{ \tilde{t}^l \mid l = 1, 2, \dots, \frac{n!}{(b!)^m} - 1 \right\}$.

Then, for all $t \in (0, \tilde{t})$ and all $l = \frac{n!}{(b!)^m } -1$, we have:
\begin{equation}
    F(\bar{X} + t D^l ) - F(\bar{X}) > 0.
\end{equation}

This implies that $D^l$ for $l = 1, 2, \dots, \frac{n!}{(b!)^m} - 1$ are strictly increasing feasible directions. Let $\text{Conv}(\bar{X}, \tilde{t})$ denote the convex hull of the points $\bar{X}$ and $\bar{X} + \tilde{t} D^l$,  $l = 1, 2, \dots, \frac{n!}{(b!)^m} - 1$. Since $\eta > 4\|A\|_{\infty} \geq 4\lambda_{\text{max}}(A)$, the function $F(X)$ is strongly concave, as proved in \cref{pro:concave}. Thus, for any $X \in \text{Conv}(\bar{X}, \tilde{t})$, we have $F(X) > F(\bar{X})$. Moreover, one can always choose a sufficiently small fixed $t > 0$ such that $B(\bar{X}, t) \cap \mathcal{F}_2 \subset \text{Conv}(\bar{X}, \tilde{t})$, where $B(\bar{X}, t) = \{ X \in \mathbb{R}^{n \times m} \mid \|X - \bar{X}\|_F^2 \leq t \}$. Consequently, $\bar{X}$ is a local minimizer of the
problem \cref{eq:l1/2-pro}.

Since all local minimizers of the problem \cref{eq:l1/2-pro} are in $\mathcal{F}_1$,  problem \cref{eq:l1/2-pro} reduces to:
\begin{equation}
\label{eq: local-feasible11}
\underset{X \in \mathcal{F}_1}{\min} \quad  \frac{1}{2} \left\langle A, X X^T \right\rangle + \left\langle G, X \right\rangle + \eta \|X\|_{1/2}^{1/2}.
\end{equation}

For any feasible point in $\mathcal{F}_1$, $\|X\|_p^p=n$ remains the same. Thus, problem \cref{eq: local-feasible11} degrades to problem \cref{eq:ori-pro}, which implies that problem \cref{eq:ori-pro} and \cref{eq:l1/2-pro} are equivalent and share the same global optimal solutions.
\end{proof} 

To address computational intractability of problem \cref{eq:ori-pro}, we relax binary constraints to continuous constraints.
However, continuous relaxation may yield fractional solutions incompatible with assignments constraints. 
Motivated by the sparsity pattern of the variable, we augment the objective with an $\ell_{1/2}$ quasi-norm penalty.
\Cref{thm:penalty} provides theoretical guarantees for this exact penalty formulation. However, problem \cref{eq:l1/2-pro} is still NP-hard, it is difficult to find it's global optimum. Thus, we only compute it's KKT point. Following \cref{01kkt} shows that calculating problem \cref{eq:l1/2-pro} is sufficient.

\begin{theorem}[Binary Nature of KKT Points]
\label{01kkt}
    When $\eta > \frac{2}{\sqrt{2}-1} \sqrt{n}(\|G\|_F + \sqrt{n}\|A\|_F )$, any KKT point of the problem \cref{eq:l1/2-pro} is a feasible point of the problem \cref{eq:ori-pro}. 
\end{theorem}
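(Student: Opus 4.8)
The plan is to argue by contradiction: assume $X^*$ is a KKT point of \cref{eq:l1/2-pro} possessing at least one entry in $(0,1)$, and produce a feasible perturbation along which first-order stationarity fails once $\eta$ exceeds the stated threshold. First I would invoke \cref{lem: equivalent1} to extract from the fractional part of $X^*$ a cycle $\mathcal{C} = \{X^*_{i_1j_1}, X^*_{i_1j_2}, \dots, X^*_{i_sj_s}, X^*_{i_sj_1}\}$ of $2s \le 2m$ entries all lying in $(0,1)$. Along $\mathcal{C}$ I would build the alternating direction $D$ with $D_{i_tj_t} = +1$, $D_{i_tj_{t+1}} = -1$ (cyclic indices), and zeros elsewhere. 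Since each participating row and column of $\mathcal{C}$ carries exactly one $+1$ and one $-1$, the matrix $D$ satisfies $D\mathbf{1}_m = 0$ and $\mathbf{1}_n^\top D = 0$; because every cycle entry is strictly interior to $[0,1]$, both $X^* + \tau D$ and $X^* - \tau D$ remain in $\mathcal{F}_2$ for all sufficiently small $\tau > 0$, so $\pm D$ are feasible directions at $X^*$.

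Next I would turn stationarity into a scalar identity. On $\mathcal{C}$ the penalty $f_2(X) = \sum_{ij}\sqrt{X_{ij}}$ is smooth, with $\partial f_2/\partial X_{ij} = \tfrac{1}{2}(X^*_{ij})^{-1/2}$, and the box constraints are inactive, so the only active constraints there are the affine row and column equalities. Applying \cref{first order for differentiable} to both $+D$ and $-D$ forces $\langle \nabla F(X^*), D\rangle = 0$, and since $D$ has vanishing row and column sums it annihilates the equality-constraint normals, reducing the stationarity condition to
\[
\frac{\eta}{2}\sum_{(i,j)\in\mathcal{C}} D_{ij}\,(X^*_{ij})^{-1/2} \;=\; -\,\langle \nabla f_1(X^*),\,D\rangle.
\]
I would then bound the right-hand side using $\nabla f_1(X) = AX + G$, the estimate $\|X^*\|_F \le \sqrt{n}$ (valid because $0 \le X^* \le 1$ with unit row sums gives $\sum_{ij}(X^*_{ij})^2 \le \sum_{ij} X^*_{ij} = n$), and $\|D\|_F = \sqrt{2s}$, yielding $|\langle \nabla f_1(X^*), D\rangle| \le (\sqrt{n}\,\|A\|_F + \|G\|_F)\sqrt{2s}$, which already reproduces the $(\|G\|_F + \sqrt{n}\,\|A\|_F)$ factor of the threshold.

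The decisive and hardest step is to bound the left-hand side from below, i.e.\ to show that the alternating sum $\big|\sum_{\mathcal{C}} D_{ij}(X^*_{ij})^{-1/2}\big|$ cannot be too small for a genuinely fractional cycle. Here I would exploit that every factor obeys $(X^*_{ij})^{-1/2} > 1$ together with the $\sqrt{\cdot}$-concavity estimate already used in \cref{thm:penalty} (the $t^{-1/2} - \sqrt{2}$ bound), orienting $D$ so that the smallest cycle entry is driven toward the boundary; this is where the constant $\tfrac{2}{\sqrt{2}-1}$ and the remaining $\sqrt{n}$ (from the cycle length $2s \le 2m \le 2n$) should enter. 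Combining this lower bound with the upper bound above produces a strict inequality that is incompatible with the identity once $\eta > \tfrac{2}{\sqrt{2}-1}\sqrt{n}(\|G\|_F + \sqrt{n}\,\|A\|_F)$, the contradiction excluding fractional entries; since then $X^* \in \{0,1\}^{n\times m}$ and $X^* \in \mathcal{F}_2$, it is feasible for \cref{eq:ori-pro}. I expect the main obstacle to be precisely the possibility of cancellation in this alternating sum: a ``balanced'' cycle makes the naive signed sum vanish, so the bound must rely on the boundary/concavity estimate and on a careful choice of orientation (and step) that keeps the perturbation feasible, rather than on the raw gradient identity alone.
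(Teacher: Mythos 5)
There is a genuine gap, and it sits exactly where you predicted: the lower bound on the alternating sum cannot be established, because that sum can be identically zero at a fractional KKT point. Testing first-order stationarity along both $+D$ and $-D$ for a cycle direction $D$ with zero row and column sums yields only the \emph{equality}
\begin{equation*}
\frac{\eta}{2}\sum_{(i,j)\in\mathcal{C}} D_{ij}\,(X^*_{ij})^{-1/2} \;=\; -\,\langle \nabla f_1(X^*),\,D\rangle ,
\end{equation*}
and this equality is perfectly consistent with fractional entries no matter how large $\eta$ is: take, for instance, a $4$-cycle with all four entries equal to $\tfrac12$, so that $(X^*_{ij})^{-1/2}\equiv\sqrt{2}$ and the alternating sum vanishes term by term; the KKT condition \cref{kkt_of_pen_pro} then only requires $A_i^\top X^*_j+G_{ij}$ to have the separable form $\nu_i+\mu_j$ on the cycle, which suitable $A,G$ can realize. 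No choice of orientation, and no appeal to concavity of $\sqrt{\cdot}$, can rescue a purely first-order cycle argument, because concavity enters second-order information: that is precisely the mechanism the paper uses in \cref{thm:penalty} to exclude fractional \emph{local minimizers} via the negative-definiteness of the pseudo-Hessian along $D$, but a KKT point need not satisfy any second-order necessary condition, so that route is unavailable here.

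The paper's proof of \cref{01kkt} avoids cycles altogether. It writes the stationarity condition on the fractional support as $A_i^\top X^*_j+G_{ij}+\tfrac{\eta}{2}(X^*_{ij})^{-1/2}=\nu_i+\mu_j$, multiplies it once by $X^*_{ij}$ and once by the entries of a binary test matrix $\tilde X\in\mathcal{F}_1$ supported on the fractional pattern, and sums; because both $X^*$ and $\tilde X$ have the same row and column sums over that pattern, the multipliers $\nu_i,\mu_j$ cancel upon subtraction. Choosing $\tilde X$ with $\tilde X_{i_0j_0}=1$ and using $\|X^*-\tilde X\|_F\le\sqrt{n}$ and $\sum (X^*_{ij})^{1/2}\le n$ then yields the quantitative lower bound $X^*_{i_0j_0}\ge\bigl(\tfrac{2}{\eta}\sqrt{n}(\|G\|_F+\sqrt{n}\|A\|_F)+1\bigr)^{-2}>\tfrac12$ for every fractional entry, and the contradiction comes from the combinatorics of the constraints (a row with one fractional entry must have at least two, which cannot both exceed $\tfrac12$), not from a directional derivative. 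If you want to salvage your outline, you would need to replace your step on the alternating sum with an argument of this multiplier-elimination type; your setup of the cycle, the feasible direction, and the bound $|\langle\nabla f_1(X^*),D\rangle|\le(\sqrt{n}\|A\|_F+\|G\|_F)\sqrt{2s}$ are all correct but do not suffice on their own.
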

\begin{proof}
    Suppose $X^*$ is a KKT point of the problem \cref{eq:l1/2-pro}. Define $U'(X^*) = \{(i,j) \mid X^*_{ij} \in (0,1)\}$, $U^1(X^*) = \{(i,j) \mid X^*_{ij} =1\}$,
$U'(X^*_{\cdot j}) = \{ i \mid X^*_{ij} \in (0,1)\}$, $U'(X^*_{i\cdot}) = \{ j \mid X^*_{ij} \in (0,1)\}$. Then we let:
    \begin{equation}
 \sum_{i\in U'(X^*_{\cdot j})} X^*_{ij} = b_j , ~\quad \forall j \in U'(X^*_{i\cdot}) , \sum_{j\in U'(X^*_{i\cdot})} X^*_{ij} = 1 , ~\quad \forall i \in U'(X^*_{\cdot j}).
    \end{equation}

By the KKT condition \cref{kkt_of_pen_pro}, we have:
\begin{equation}
\label{eq:xijkkt}
    A_i^TX^*_j + G_{ij} + \frac{1}{2}\eta (X_{ij}^*)^{-1/2} = \nu_i + \mu_j, \quad \forall (i,j) \in U'(X^*).
\end{equation}

Multiplying by $X^*_{ij}$ on both sides of \cref{eq:xijkkt} and then summing them over $(i,j) \in U'(X^*)$, we have:
\begin{equation}
\label{eq:mulx1}
\begin{aligned}
    &\sum_{(i,j) \in U'(X^*)} \left( (A_i^TX^*_j + G_{ij} )X^*_{ij}  +  \frac{1}{2}\eta (X_{ij}^*)^{1/2}   \right) \\
    &  = \sum_{(i,j) \in U'(X^*)} \left( \nu_i + \mu_j   \right) X^*_{ij}  \\
    &= \sum_{i\in U'(X^*_{\cdot j})} \left( \nu_i\left( \sum_{j\in U'(X^*_{i\cdot})} X^*_{ij} \right) \right) +  \sum_{i\in U'(X^*_{i\cdot})} \left( \mu_i\left( \sum_{j\in U'(X^*_{\cdot j})} X^*_{ij} \right) \right) \\
    &= \sum_{i\in U'(X^*_{\cdot j})} \nu_i  + \sum_{i\in U'(X^*_{i\cdot})} \mu_j b_j.\\
\end{aligned}    
\end{equation}

Define $\mathcal{F}_1' = \{X \mid X \in \mathcal{F}_1, ~ U'(X) \subseteq U'(X^*), X_{ij} =1, \forall (i,j) \in U^1(X^*) \}$.
Consider any point $X \in \mathcal{F}_1'$, we have:
\begin{equation}
\label{eq:mulx2}
\begin{aligned}
    &\sum_{(i,j) \in U'(X^*)} \left( (A_i^TX^*_j + G_{ij} )X_{ij}  +  \frac{1}{2}\eta (X_{ij}^*)^{-1/2}  X_{ij} \right) \\
    &  = \sum_{(i,j) \in U'(X^*)} \left( \nu_i + \mu_j   \right) X_{ij}  \\
    &= \sum_{i\in U'(X^*_{\cdot j})} \left( \nu_i\left( \sum_{j\in U'(X^*_{i\cdot})} X_{ij} \right) \right) +  \sum_{i\in U'(X^*_{i\cdot})} \left( \mu_i\left( \sum_{i\in U'(X_{\cdot j})} X_{ij} \right) \right) \\
    &= \sum_{i\in U'(X^*_{\cdot j})} \nu_i  + \sum_{i\in U'(X^*_{i\cdot})} \mu_j b_j.\\
\end{aligned}    
\end{equation}

Combined \cref{eq:mulx1} with \cref{eq:mulx2} we can obtain:
\begin{equation}
\label{eq:c1}
    \sum_{(i,j) \in U'(X^*)} \frac{1}{2}\eta (X^*_{ij})^{-1/2} X_{ij} = \sum_{(i,j) \in U'(X^*)} (A_i^TX^*_j + G_{ij} ) (X_{ij} - X^*_{ij}) + \sum_{(i,j) \in U'(X^*)}  \frac{1}{2}\eta (X^*_{ij})^{1/2}.
\end{equation}

Given any $(i_0,j_0)\in U'(X^*)$, we can choose some special $\tilde{X} \in \mathcal{F}_1'$ such that $\tilde{X}_{i_0j_0} = 1$, then:
\begin{equation}
\label{eq:c2}
    \sum_{(i,j) \in U'(X^*)} (X^*_{ij})^{-1/2} \tilde{X}_{ij} = (X^*_{i_0j_0})^{-1/2} + \sum_{\substack{(i,j) \in U'(X^*)\\ (i,j) \neq (i_0, j_0)}} (X^*_{ij})^{-1/2} \tilde{X}_{ij} \geq (X^*_{i_0j_0})^{-1/2}+ n-1.
\end{equation}

By the fact that $\|X^* - \tilde{X}\|_F \leq  \sqrt{n} $, we have:
\begin{equation}
\label{eq:c3}
    \sum_{(i,j) \in U'(X^*)}  (A_i^TX^*_j + G_{ij} ) (X^*_{ij} - \tilde{X}_{ij}) \leq \|A_i^TX^*_j + G_{ij}\|_F \|X^* - \tilde{X}\|_F \leq \sqrt{n}(\|G\|_F + \sqrt{n}\|A\|_F ).
\end{equation}

Combining \cref{eq:c1}, \cref{eq:c2}, \cref{eq:c3} with $\sum_{(i,j) \in U'(X^*)} (X^*_{ij})^{1/2} \leq n$, we conculde that:
\begin{equation}
    X^*_{i_0j_0} \geq \left( \frac{2}{\eta} \sqrt{n}(\|G\|_F + \sqrt{n}\|A\|_F ) +1\right)^{-2},~\forall (i_0,j_0)\in U'(X^*)
\end{equation}

When  $\eta > \frac{2}{\sqrt{2}-1} \sqrt{n}(\|G\|_F + \sqrt{n}\|A\|_F )$, we see that any $(i_0,j_0)\in U'(X^*)$, $ X^*_{i_0j_0} > \frac{1}{2}$, which makes a contradiction. Thus, $U'(X^*) =\emptyset$, $X^*$ is a feasible point for the problem \cref{eq:ori-pro}. 
\end{proof}
\section{ADMM Algorithm}
\label{sec: algo}
In \cref{sec: model}, we reformulate problem \cref{eq:ori-pro} into a more solvable form, specifically problem \cref{eq:l1/2-pro}.
However, due to the presence of the nonsmooth term and the constraint set 
$\mathcal{F}_2$, many first-order methods, such as subgradient projection, proximal point projection, do not admit closed-form solutions in their corresponding subproblems. Therefore, we use the variable splitting method and reformulate problem \cref{eq:l1/2-pro} into problem \cref{eq:l1/2-proxy}. 
Despite this transformation, problem \cref{eq:l1/2-proxy} remains difficult to solve globally due to its nonconvex and nonsmooth nature. 
Therefore, we aim to compute a high quality KKT point of the problem rather than its exact global optimum. It is resonable because any KKT point of problem \cref{eq:l1/2-proxy} is a local minimum of problem \cref{eq:ori-pro} by \cref{pro:Equivalence_of_KKT_points} and \cref{01kkt}.  
To this end, we adopt the ADMM framework, as the resulting subproblems admit closed-form solutions. Moreover, ADMM offers an effective approach for obtaining a KKT point in problems that exhibit nonconvexity and nonsmoothness.
 We also provide a detailed discussion on the optimality conditions and the termination criterion. The convergence guarantee and the finite-step termination property are established in the subsequent section.
\subsection{Variable Splitting}
We adopt the variable splitting technique and reformulate problem \cref{eq:l1/2-pro} into problem \cref{eq:l1/2-proxy}. To compute a KKT point of the problem \cref{eq:l1/2-proxy}, we employ ADMM owing to its efficiency in handling structured nonconvex problems. ADMM decomposes the problem into simpler subproblems that are solved easier. We begin by constructing the augmented Lagrangian function:
\begin{equation*}
L_\beta(X, Y, \Lambda) = \frac{1}{2} \left\langle A, X Y^{\top} \right\rangle + \langle G, Y \rangle + \eta  \|X\|^{1/2}_{1/2} + \langle \Lambda, Y-X \rangle + \frac{\beta}{2} \| Y - X \|_F^2,
\end{equation*}
where $\Lambda \in \mathbb{R}^{n \times m}$ is the dual variable (or Lagrangian multiplier) associated with the constraint $Y = X$, and $\beta > 0$ is the penalty parameter that controls the weight of the quadratic penalty for constraint violation. 

The algorithm alternately updates the three variables: \( X \), \( Y \), and \( \Lambda \). The subsequent section details the update process for each variable.
\subsection{Update of Variables}
\label{subsection:Update of Variables}
We now detail the \(k\textsuperscript{th}\) iteration of the ADMM algorithm. First, with \( \Lambda^{k-1} \) and \( Y^{k-1} \) fixed, the subproblem for updating \( X \) is formulated as follows:
\begin{equation}
\label{eq: update Y}
\begin{aligned}
X^{k} 
&= \underset{X \in \mathbb{R}^{n\times m}}{\arg\min} \quad L_\beta(X, Y^{k-1}, \Lambda^{k-1}) \\
&= \underset{X \in \mathbb{R}^{n\times m}}{\arg\min} \quad 
   \frac{\beta}{2} \left\| X - Y^{k-1} - \frac{1}{\beta} \left( \Lambda^{k-1} - \frac{1}{2} A Y^{k-1} \right) \right\|_F^2 
   + \eta \|X\|^{1/2}_{1/2}, \\
\text{s.t.} \quad 
&0 \leq X \leq 1.
\end{aligned}
\end{equation}

Defining \( R^k = Y^{k-1} + \frac{1}{\beta} \left( \Lambda^{k-1} - \frac{1}{2} A Y^{k-1} \right) \), problem \cref{eq: update Y} decouples into the following independent subproblem for each entry \( x \) of \( X \):
\begin{equation} 
\label{eq: Update_Y_1}
\begin{aligned}  
\min_{x} & \quad \frac{\beta}{2} \left( x - r \right)^2 + \eta x^{1/2}, \\
\text{s.t.} & \quad 0 \leq x \leq 1,
\end{aligned}
\end{equation}
where \( r \) denotes the corresponding entry of \( R^k \).

The problem \cref{eq: Update_Y_1} admits a closed-form solution. Specifically, for \( 0 < x < 1 \), the first-order optimality condition yields the following equation:
\begin{equation}
\label{x*}
\beta (x - r) + \frac{1}{2} \eta x^{-1/2} = 0, \quad 0 < x < 1.
\end{equation}

To solve it, we apply the substitution \( t = x^{-1/2} \), which transforms it into a cubic equation:
\[
\frac{1}{2}\eta t^3 - r t^2 + \beta = 0, \quad t > 1.
\]

This cubic equation is then solved using the method described in ~\cite{xu2012l_}.
If the cubic equation has at least one real root \( t > 1 \), we define the candidate solution \( x^* = \{(\max\, \{ t >1 |  \, \frac{1}{2}\eta t^3 - r t^2 + \beta = 0 \} )^{-2} \)\}. Otherwise, we set \( x^* = \emptyset \). The global minimizer for \cref{eq: Update_Y_1} is found by comparing the objective function values at the boundary points \( x = 0 \) and \( x = 1 \), and at the stationary point candidate \( x^* \):
\begin{equation}
\label{eq: Update_Y_2}
x^k = \underset{x \in \{0,1\} \cup x^*}{\arg\min} \ \frac{\beta}{2}(x - r)^2 + \eta x^{1/2}.
\end{equation}

This process yields the updated matrix \( X^{k} \) in an entry-wise manner.

Next, with \( \Lambda^{k-1} \) and \( X^{k} \) fixed, the subproblem for updating \( Y \) is formulated as follows:
\begin{equation}
\label{eq:update x}
\begin{aligned}
Y^{k} &= \underset{Y \in \mathbb{R}^{n\times m}}{\arg\min} \quad  L_\beta(X^{k}, Y, \Lambda^{k-1}) \\
&= \underset{Y \in \mathbb{R}^{n\times m}}{\arg\min} \quad \frac{\beta}{2} \left\| Y - X^{k} + \frac{1}{\beta} \left( \Lambda^{k-1} + \frac{1}{2} A X^{k} + G \right) \right\|_F^2, \\
\text{s.t.} \quad & \mathbf{1}_n^T Y = b ~ \mathbf{1}_m^T, \\
               & Y \mathbf{1}_m = \mathbf{1}_n.
\end{aligned}
\end{equation}

Defining $B^k = X^{k} - \frac{1}{\beta}\left(\Lambda^{k-1} + \frac{1}{2} A X^{k} + G\right)$, the problem simplifies to:
\begin{equation}
\label{eq:update x 1}
\begin{aligned}
\underset{Y \in \mathbb{R}^{n\times m}}{\min} \quad  & \frac{\beta}{2} \left\| Y - B^k \right\|_F^2, \\
\text{s.t.} \quad & \mathbf{1}_n^T Y = b ~ \mathbf{1}_m^T, \\
                & Y \mathbf{1}_m = \mathbf{1}_n.
\end{aligned}
\end{equation}

The problem \cref{eq:update x 1} is a projection onto an affine constraint set and admits the closed-form solution:
\begin{equation}
\label{eq: up_y}
    Y^{k} = B^k-\frac{1}{n}\mathbf{1}_n\mathbf{1}_n^{\top}B^k + \frac{1}{m}(\mathbf{1}_n - B^k\mathbf{1}_m + \frac{1}{n}\mathbf{1}_n\mathbf{1}_n^TB^k\mathbf{1}_m)\mathbf{1}_m^{\top}.
\end{equation}

Finally, the dual variable (multiplier) \( \Lambda \) is updated via:
\begin{equation}
\label{eqn:lambda}
    \Lambda^{k}  = \Lambda^{k-1} + \beta(Y^{k} - X^{k}).
\end{equation}

The three variables \( X \), \( Y \), and \( \Lambda \) are updated alternately. The availability of closed-form solutions for both the \( X \) subproblem and the \( Y \) subproblem enhances the numerical stability and robustness of the algorithm.

\subsection{Optimality Conditions and Termination Criterion}
Building upon the described algorithm, we now analyze its optimality conditions in light of \cref{def: kkt} and establish a corresponding termination criterion.

We define the feasible sets as $\mathcal{X}= \{X \in \mathbb{R}^{n \times m} \mid 0\leq X \leq 1 \}$ and $\mathcal{Y}= \{Y \in \mathbb{R}^{n \times m} \mid \mathbf{1}_n^T Y = b ~ \mathbf{1}_m^T,\, Y \mathbf{1}_m = \mathbf{1}_n \}$. The normal cone to the box constraint set $\mathcal{X}$ at a point $X$ is characterized entry-wise. A matrix $M$ belongs to $N_{\mathcal{X}}(X)$ if and only if each entry $M_{ij}$ satisfies:
\begin{equation*}
    M_{ij} \in
    \begin{cases}
        \{0\}, & \text{if } X_{ij} \in (0,1), \\
        [0, \infty), & \text{if } X_{ij} = 1, \\
        (-\infty, 0], & \text{if } X_{ij} = 0.
    \end{cases}
\end{equation*}

The gradients of these constraint functions are given by $\nabla (Y \mathbf{1}_m - \mathbf{1}_n) = I \otimes \mathbf{1}_m^{\top}$ and $\nabla (\mathbf{1}_n^{\top} Y - b  \mathbf{1}_m^{\top}) = \mathbf{1}_n \otimes I$. Consequently, the normal cone $N_{\mathcal{Y}}(Y)$ is the linear space spanned by the adjoints of these gradient operators:
\begin{equation*}
N_{\mathcal{Y}}(Y) = \left\{ Z \in \mathbb{R}^{n \times m} \;\middle|\;
Z = \nu \mathbf{1}_m^{\top} + \mathbf{1}_n \mu^{\top},\;
 \nu \in \mathbb{R}^n,\; \mu \in \mathbb{R}^m
\right\}.
\end{equation*}

Based on \cref{def: kkt}, a point $(X^*,M^*,Z^*)$ is a KKT point of the problem \cref{eq:l1/2-pro} if it satisfies:
\begin{equation}
\label{kkt_of_pen_pro} 
0 \in A_i^{\top} X_j + G_{ij} + \eta \, \partial |X_{ij}|^{1/2} + M_{ij}^* + Z_{ij}^*.
\end{equation}

Similarly, $(X^*,Y^*,\Lambda^*,M^*,Z^*)$ is a KKT point of problem \cref{eq:l1/2-proxy} if the following conditions hold:
\begin{subequations}
\label{overall_kkt}
\begin{align}
    0 &= Y^* - X^*, \label{overall_kkt_a} \\
    0 &\in -\Lambda_{ij}^* +  \frac{1}{2} A^{\top}_i Y^*_j  + \eta \partial(|X_{ij}^*|)^{1/2} + M_{ij}^*, \quad \ M_{ij}^* \in N_{\mathcal{X}}(X^*_{ij}) \quad \forall i,j, \label{overall_kkt_d} \\
    0 &= \Lambda_{ij}^* +  \frac{1}{2} A^{\top}_i X^*_j  + G_{ij} + Z_{ij}^*, \quad Z^* \in N_{\mathcal{Y}}(Y^*_{ij}) \quad \forall i,j. \label{overall_kkt_e}
\end{align}
\end{subequations}

\begin{proposition}[Equivalence of KKT points]
\label{pro:Equivalence_of_KKT_points}
The KKT points of problem~\cref{eq:l1/2-pro} and problem~\cref{eq:l1/2-proxy} are equivalent; that is, \(X^*\) is a KKT point of one problem if and only if it is a KKT point of the other.
\end{proposition}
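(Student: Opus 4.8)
The plan is to prove both implications by exploiting the hard equality constraint $Y - X = 0$, which forces $Y^* = X^*$ at every KKT point, and then to pass between the two stationarity systems by eliminating (respectively reconstructing) the splitting multiplier $\Lambda^*$. The key algebraic observation I would rely on is that summing the two stationarity conditions \cref{overall_kkt_d} and \cref{overall_kkt_e} of the split problem cancels $\Lambda^*$ and, after replacing $Y^*$ by $X^*$, merges the two half-gradients $\frac{1}{2} A_i^{\top} Y_j^*$ and $\frac{1}{2} A_i^{\top} X_j^*$ into the single term $A_i^{\top} X_j^*$ that appears in \cref{kkt_of_pen_pro}.

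For the forward direction, I would start from a point $(X^*, Y^*, \Lambda^*, M^*, Z^*)$ satisfying \cref{overall_kkt}. Condition \cref{overall_kkt_a} gives $Y^* = X^*$; together with $X^* \in \mathcal{X}$ and $Y^* \in \mathcal{Y}$ this yields $X^* \in \mathcal{F}_2$, so $X^*$ is feasible for problem \cref{eq:l1/2-pro}, while the memberships $M^* \in N_{\mathcal{X}}(X^*)$ and $Z^* \in N_{\mathcal{Y}}(X^*) = N_{\mathcal{Y}}(Y^*)$ are inherited directly. Substituting $Y^* = X^*$ into \cref{overall_kkt_d} and adding the result to \cref{overall_kkt_e} eliminates $\Lambda^*$ and produces $0 \in A_i^{\top} X_j^* + G_{ij} + \eta\,\partial|X_{ij}^*|^{1/2} + M_{ij}^* + Z_{ij}^*$, which is exactly \cref{kkt_of_pen_pro}; hence $(X^*, M^*, Z^*)$ is a KKT point of problem \cref{eq:l1/2-pro}.

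For the converse, I would begin from a KKT point $(X^*, M^*, Z^*)$ of problem \cref{eq:l1/2-pro} and construct the missing variables. Setting $Y^* := X^*$ makes \cref{overall_kkt_a} hold and splits the feasibility $X^* \in \mathcal{F}_2$ into the box membership for $X^*$ and the affine membership for $Y^*$, while the normal-cone conditions carry over unchanged. I would then define the multiplier entrywise by $\Lambda_{ij}^* := -\frac{1}{2} A_i^{\top} X_j^* - G_{ij} - Z_{ij}^*$, which makes \cref{overall_kkt_e} hold by construction; substituting this $\Lambda^*$ together with $Y^* = X^*$ into the right-hand side of \cref{overall_kkt_d} reduces it, after cancellation, back to \cref{kkt_of_pen_pro}, so that \cref{overall_kkt_d} holds as well. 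Thus $(X^*, Y^*, \Lambda^*, M^*, Z^*)$ satisfies \cref{overall_kkt}.

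Since the argument is essentially bookkeeping, I do not expect a deep obstacle; the only point requiring care is the consistency of the nonsmooth and conical objects across the two formulations. Concretely, I must confirm that a single selected subgradient $s_{ij} \in \partial|X_{ij}^*|^{1/2}$ serves simultaneously in \cref{kkt_of_pen_pro} and in \cref{overall_kkt_d}, and that $N_{\mathcal{Y}}(Y^*) = N_{\mathcal{Y}}(X^*)$ once $Y^* = X^*$ is imposed. Both are immediate because the identification $Y^* = X^*$ is exact and the regularizer $\eta\|X\|_{1/2}^{1/2}$ is attached to the same variable $X$ in both problems, so no reselection of subgradients or recomputation of normal cones is required.
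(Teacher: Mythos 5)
Your proposal is correct and follows essentially the same route as the paper: both directions are handled by imposing $Y^*=X^*$ and either summing \cref{overall_kkt_d} and \cref{overall_kkt_e} to cancel $\Lambda^*$ (recovering \cref{kkt_of_pen_pro}) or reconstructing $\Lambda^*$ from one of the two stationarity conditions. The only cosmetic difference is that you define $\Lambda^*$ from the $Y$-condition \cref{overall_kkt_e} (an equality, so no subgradient selection is needed) whereas the paper defines it from \cref{overall_kkt_d}; this is immaterial.
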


\begin{proof}
Let \(X^*\) be a KKT point of problem~\cref{eq:l1/2-pro}. Then there exist multipliers \(M^*\) and \(Z^*\) satisfying the KKT conditions in~\cref{kkt_of_pen_pro}. 
Consequently, there exist \(\Lambda^*\) and \(Y^*\) such that
\begin{equation}
\label{eq:Lambda^*}
\begin{aligned}
&0 = Y^* - X^*,\\
&\Lambda^*_{ij} = \tfrac{1}{2}A_i^\top Y_j^* + \eta\, \partial |X_{ij}^*|^{1/2} + M_{ij}^*, \quad \forall i,j,
\end{aligned}
\end{equation}
which correspond precisely to conditions~\cref{overall_kkt_a} and~\cref{overall_kkt_d}. 
By comparing~\cref{kkt_of_pen_pro} and~\cref{eq:Lambda^*}, we obtain condition~\cref{overall_kkt_e}. 

Conversely, if \(X^*\) is a KKT point of problem~\cref{eq:l1/2-proxy}, then by summing~\cref{overall_kkt_d} and~\cref{overall_kkt_e}, it follows that \(X^*\) satisfies the KKT conditions of problem~\cref{eq:l1/2-pro}. 
Hence, the two problems share identical KKT points.
\end{proof}

However, since ADMM algorithm generates only sequences of iterations $(X^k, Y^k, \Lambda^k)$, we design a termination criterion to check how well these iterates satisfy the KKT conditions in \cref{overall_kkt}.

Consider the subproblem of updating \( X \). Since \( X^{k} \) is computed as an exact solution to its subproblem, it must satisfy the following optimality conditions:
\begin{equation}
\label{kkt_x} 
   0 \in \beta(X^{k}_{ij} - Y^{k-1}_{ij}) - \Lambda^{k-1}_{ij} + \frac{1}{2} A^{\top}_i Y^{k-1}_j  + \eta \partial (|X_{ij}^k|)^{1/2} + M_{ij}^k, \quad M_{ij}^k \in N_{\mathcal{X}}(X^k_{ij}) \quad \forall i,j. 
\end{equation}

Comparing the conditions in \cref{kkt_x} with the target KKT conditions \cref{overall_kkt_d}, it therefore suffices to ensure that the quantity \( h^{k} \) is sufficiently small, where \( h^{k} \) is defined as:
\begin{equation*}
h^{k} = \left\| \frac{1}{2} A (Y^{k} - Y^{k-1}) + \beta (Y^{k-1} - Y^{k}) \right\|_F = \left\| \left( \frac{1}{2} A - \beta I \right) (Y^{k} - Y^{k-1}) \right\|_F.
\end{equation*}

Similarly, the subproblem for \( Y \) \cref{eq:update x 1} is solved exactly, ensuring that the updated \( Y^k \) satisfies:
\begin{equation}
\label{kkt y}
0 = \Lambda^{k}_{ij} + \frac{1}{2} A_{i}^{\top} X^{k}_{j} + G_{ij} + Z^k_{ij}, \quad  Z^k \in N_{\mathcal{Y}}(Y^k)\quad \forall i,j.
\end{equation}

Thus, condition \cref{kkt y} ensures that \cref{overall_kkt_e} is satisfied by the iterates. However, the primal feasibility condition \cref{overall_kkt_a}, \( Y^* = X^* \), is generally not fulfilled until convergence. In summary, the residuals of the algorithm can be determined as \cref{eq:kkt_res}: 
\begin{equation}
\label{eq:kkt_res}
    \begin{aligned}
    & h^{k} = \| \frac{1}{2} A (Y^{k} - Y^{k-1}) + \beta (Y^{k-1} - Y^{k}) \|_F, \\
    & p^{k} = \beta \| Y^{k} - X^{k} \|_F .
\end{aligned}
\end{equation}

Lastly, the overall framework of the ADMM algorithm is outlined in \cref{alg:algorithm framework}.
\begin{algorithm}
\caption{ADMM Algorithm for Solving Problem \cref{eq:l1/2-proxy}}
\label{alg:algorithm framework}
\begin{algorithmic}[1]
\REQUIRE Penalty parameters \( \beta > 0 \), \( \eta > 0 \); 
\STATE \textbf{Initialize:} \( X^0, Y^0 \); \( \Lambda^0 = \mathbf{0} \)

\FOR{k=1,2,3,\dots}
    \STATE \( R^k \gets Y^{k-1} + \frac{1}{\beta} \left( \Lambda^{k-1} - \frac{1}{2} A Y^{k-1} \right) \) \COMMENT{Prepare for \( X \) update}
    \STATE \( X^{k} \gets \text{Solve subproblem \cref{eq: Update_Y_2} entry-wise using } R^k \) 
    \STATE \( B^k \gets X^{k} - \frac{1}{\beta} \left( \Lambda^{k-1} + \frac{1}{2} A X^{k} + G \right) \) \COMMENT{Prepare for \( Y \) update}
    \STATE \( Y^{k} \gets B^k - \frac{1}{n} \mathbf{1}_n \mathbf{1}_n^{\top} B^k + \frac{1}{m} \left( \mathbf{1}_n - B^k \mathbf{1}_m + \frac{1}{n} \mathbf{1}_n \mathbf{1}_n^{\top} B^k \mathbf{1}_m \right) \mathbf{1}_m^{\top} \) \COMMENT{Closed-form solution \cref{eq: up_y}}
    \STATE \( \Lambda^{k} \gets \Lambda^{k-1} + \beta (Y^{k} - X^{k}) \) \COMMENT{Dual variable update}
\ENDFOR
\end{algorithmic}
\end{algorithm}

\section{Theoretical Analysis}
\label{Theoretical} 
This section presents a comprehensive theoretical analysis of \cref{alg:algorithm framework}, culminating in a detailed convergence theorem and a proof of its finite-step termination property.

\subsection{Convergence Proof}
We begin by reformulating \cref{eq:l1/2-proxy} into the following equivalent constrained problem using indicator functions:
\begin{equation*}
\min_{X, Y} \quad 
\frac{1}{2} \left\langle A, X Y^\top \right\rangle 
+ \left\langle G, Y \right\rangle 
+ \eta \|X\|^{1/2}_{1/2} 
 + \mathbb{I}_{\mathcal{Y}}(Y) + \mathbb{I}_{\mathcal{X}}(X), 
\quad \text{s.t.} \quad 
Y - X = 0.
\end{equation*}

The augmented Lagrangian function for this reformulated problem is:
\begin{equation}
\label{eq:augmented-lagrangian} 
\begin{aligned}
    L_{\beta}(X, Y, \Lambda) = & 
    \frac{1}{2} \left\langle A, X Y^\top \right\rangle 
    + \left\langle G, Y \right\rangle 
    + \eta \|X\|^{1/2}_{1/2}  + \mathbb{I}_{\mathcal{Y}}(Y) +\mathbb{I}_{\mathcal{X}}(X) \\
    +& \left\langle \Lambda, Y - X \right\rangle 
    + \frac{\beta}{2} \|Y - X\|_F^2. \\
\end{aligned}
\end{equation}

\begin{assumption}
\label{assu 1}
    The sequence of dual variables $\{\Lambda^k\}$ in \cref{alg:algorithm framework} is bounded.
\end{assumption}
This is a common assumption in the convergence analysis of nonconvex optimization algorithms \cite{bolte2018nonconvex,bourkhissi2025convergence,cohen2022dynamic}. While the boundedness of primal iterates $\{(X^k, Y^k)\}$ can often be guaranteed under certain conditions (e.g., a compact feasible set or a coercive augmented Lagrangian), ensuring the boundedness of the dual sequence $\{\Lambda^k\}$ in nonconvex settings is considerably more challenging. Traditional coercivity-based analysis techniques do not extend straightforwardly to this setting. As noted in~\cite{hallak2023adaptive}, definitive theoretical guarantees on the boundedness of dual variables in general nonconvex ADMM remain an open challenge. In  \cref{sec:experiments}, we show that for practical scenarios, this assumption is valid.

Under \cref{assu 1}, we now establish the boundedness of the iterative sequence generated by \cref{alg:algorithm framework}.

 \begin{proposition}[Boundedness of Iterates]
\label{pro: sq bound}
    Under \cref{assu 1}, the sequence of iterates $\{ (X^k, Y^k, \Lambda^k) \}$ generated by \cref{alg:algorithm framework} is bounded.
\end{proposition}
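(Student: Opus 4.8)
The plan is to exploit the asymmetry in how the three blocks are updated: two of them inherit boundedness immediately from the structure of the algorithm, and the third is a Lipschitz image of the first two. First I would observe that $X^k$ is, by construction, the exact minimizer of the $X$-subproblem \cref{eq: update Y} over the box $\mathcal{X} = \{X : 0 \leq X \leq 1\}$, so every iterate satisfies $0 \leq X^k \leq 1$ entrywise and hence $\|X^k\|_F \leq \sqrt{nm}$. Thus $\{X^k\}$ is bounded with no further argument. The boundedness of $\{\Lambda^k\}$ is supplied directly by \cref{assu 1}. These two sequences therefore require no additional work.

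The only nontrivial block is $\{Y^k\}$, precisely because the feasible set $\mathcal{Y}$ is an affine subspace determined by fixed row and column sums and is therefore \emph{not} compact; boundedness of $Y^k$ cannot be read off the constraints alone. Here I would invoke the closed form \cref{eq: up_y}, in which $Y^k$ is the orthogonal projection onto $\mathcal{Y}$ of the matrix $B^k = X^k - \frac{1}{\beta}\left(\Lambda^{k-1} + \frac{1}{2} A X^k + G\right)$. Since $X^k$ and $\Lambda^{k-1}$ are already known to be bounded and $A$, $G$, $\beta$ are fixed, the sequence $\{B^k\}$ is bounded. The formula \cref{eq: up_y} expresses $Y^k$ as an affine function of $B^k$ (it subtracts fixed rank-one corrections from $B^k$ and adds a fixed affine term), hence it is Lipschitz continuous, and a Lipschitz map carries bounded sets to bounded sets; therefore $\{Y^k\}$ is bounded as well. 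Combining the three bounds establishes the boundedness of $\{(X^k, Y^k, \Lambda^k)\}$.

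I expect no genuine obstacle in this proof; the single point requiring care is exactly the non-compactness of $\mathcal{Y}$, which forces us to propagate the bound through the explicit affine projection rather than appealing to compactness of the feasible region. If one prefers to avoid manipulating \cref{eq: up_y} entrywise, an equivalent and cleaner route is to fix any reference point $\hat{Y} \in \mathcal{Y}$ and use the nonexpansiveness of the Euclidean projection onto an affine set, which gives $\|Y^k - \hat{Y}\|_F \leq \|B^k - \hat{Y}\|_F$; a uniform bound on $\{B^k\}$ then transfers immediately to $\{Y^k\}$.
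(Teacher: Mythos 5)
Your proof is correct, and it follows the paper's decomposition exactly for two of the three blocks: $\{X^k\}$ is bounded by the box constraint and $\{\Lambda^k\}$ by \cref{assu 1}. Where you diverge is the argument for $\{Y^k\}$. The paper simply rearranges the dual update \cref{eqn:lambda} to write $Y^{k} = X^{k} + \frac{1}{\beta}(\Lambda^{k} - \Lambda^{k-1})$, so boundedness of $Y^k$ drops out in one line from the bounds on $X^k$ and $\Lambda^k$, with no reference to the $Y$-subproblem at all. You instead go through the $Y$-update itself: $B^k$ is bounded because $X^k$ and $\Lambda^{k-1}$ are, and the projection onto the affine set $\mathcal{Y}$ is nonexpansive (or, equivalently, the explicit formula \cref{eq: up_y} is an affine, hence Lipschitz, map), so $\{Y^k\}$ is bounded. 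Both arguments are valid. The paper's route is shorter and more elementary; yours is marginally more robust in that it would survive modifications to the dual update (e.g.\ a relaxed or damped multiplier step) since it only uses the structure of the $Y$-subproblem, whereas the paper's identity is tied to the exact form of \cref{eqn:lambda}. Your closing remark correctly identifies the one point requiring care --- the non-compactness of $\mathcal{Y}$ --- which is precisely why neither proof can read the bound off the constraints directly.
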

\begin{proof}
    The boundedness of $\{X^k\}$ follows directly from the box constraint $0 \leq X \leq 1$. From the dual variable update rule $ \Lambda^{k} = \Lambda^{k-1} + \beta(Y^{k} - X^{k}) $, we can express $Y^{k}$ as:
\[
Y^{k} = X^{k} + \frac{1}{\beta}(\Lambda^{k} - \Lambda^{k-1}).
\]

Under \cref{assu 1}, it follows that the sequence $\{Y^k\}$ is also bounded. This completes the proof.
\end{proof}

Leveraging \cref{assu 1} and \cref{pro: sq bound}, we now derive two key properties:
\begin{enumerate}
    \item The sequence of augmented Lagrangian values \( \{L_\beta ( X^k, Y^k, \Lambda^k )\} \) is bounded below.
    \item The norm of updates of dual variables \( \|\Lambda^{k+1} - \Lambda^k\|_F \) is bounded by the norms of the updates of the primary variables \( \|X^{k+1} - X^k\|_F \) and \( \|Y^{k+1} - Y^k\|_F \).  
\end{enumerate}

These properties are formally stated in the following propositions.
\begin{proposition}[Boundedness Below of Augmented Lagrangian]
    Under \cref{assu 1}, the sequence of augmented Lagrangian values $ \{ L_\beta ( X^k, Y^k, \Lambda^k ) \} $ is bounded below.
\end{proposition}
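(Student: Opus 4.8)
The plan is to evaluate $L_\beta$ directly at the iterates and bound each of its terms separately, relying on \cref{assu 1} together with the boundedness of the primal iterates established in \cref{pro: sq bound}. The first observation is that the two indicator functions in \cref{eq:augmented-lagrangian} vanish along the trajectory: by construction $X^k$ solves the box-constrained $X$-subproblem, so $X^k \in \mathcal{X}$, and $Y^k$ is the exact projection \cref{eq: up_y} onto the affine set $\mathcal{Y}$, so $\mathbb{I}_{\mathcal{X}}(X^k) = \mathbb{I}_{\mathcal{Y}}(Y^k) = 0$. Hence $L_\beta(X^k,Y^k,\Lambda^k)$ reduces to the finite expression $\tfrac12\langle A, X^k (Y^k)^\top\rangle + \langle G, Y^k\rangle + \eta\|X^k\|_{1/2}^{1/2} + \langle \Lambda^k, Y^k - X^k\rangle + \tfrac{\beta}{2}\|Y^k - X^k\|_F^2$, and it suffices to bound this quantity below uniformly in $k$.

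Next I would bound each surviving term. The penalty term $\tfrac{\beta}{2}\|Y^k - X^k\|_F^2$ is nonnegative, and the regularizer is controlled by $0 \le \eta\|X^k\|_{1/2}^{1/2} \le \eta\, nm$, since $0 \le X^k \le 1$ forces each entry to satisfy $0 \le (X_{ij}^k)^{1/2} \le 1$. The bilinear term is bounded in modulus via Cauchy--Schwarz by $\tfrac12\|A\|_F \|X^k\|_F \|Y^k\|_F$ and the linear term by $\|G\|_F\|Y^k\|_F$; because $\{X^k\}$ and $\{Y^k\}$ are bounded by \cref{pro: sq bound}, both remain bounded. The only remaining term is the coupling term $\langle \Lambda^k, Y^k - X^k\rangle$, which I would control using $|\langle \Lambda^k, Y^k - X^k\rangle| \le \|\Lambda^k\|_F (\|Y^k\|_F + \|X^k\|_F)$ and invoking the boundedness of $\{\Lambda^k\}$ from \cref{assu 1}. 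Collecting these estimates produces a finite lower bound independent of $k$.

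An equivalent and slightly cleaner route uses the dual update \cref{eqn:lambda}, which gives $Y^k - X^k = \tfrac1\beta(\Lambda^k - \Lambda^{k-1})$; substituting this into the last two terms rewrites them purely in terms of multiplier increments, after which \cref{assu 1} again delivers boundedness. Either way, the substantive point---and the only place where the argument genuinely relies on a hypothesis rather than routine continuity---is the coupling term $\langle \Lambda^k, Y^k - X^k\rangle$: without control on the dual variables this inner product could drive $L_\beta$ toward $-\infty$, which is precisely why \cref{assu 1} is imposed. All other terms are bounded by elementary estimates on the compact region where the primal iterates live, so I anticipate no real difficulty there; the entire content of the statement is funneled through the dual boundedness assumption.
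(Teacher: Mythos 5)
Your proposal is correct and follows essentially the same route as the paper: both arguments reduce to observing that every term of $L_\beta(X^k,Y^k,\Lambda^k)$ is a continuous function evaluated along the bounded sequence $\{(X^k,Y^k,\Lambda^k)\}$ (primal boundedness from \cref{pro: sq bound}, dual boundedness from \cref{assu 1}), with the indicator terms contributing nothing negative. The paper packages the multiplier and penalty terms via completing the square into $-\tfrac{1}{2\beta}\|\Lambda^k\|_F^2 + \tfrac{\beta}{2}\|Y^k - X^k + \tfrac{1}{\beta}\Lambda^k\|_F^2$ rather than bounding $\langle \Lambda^k, Y^k - X^k\rangle$ directly, but this is a cosmetic difference, not a different argument.
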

\begin{proof}
We begin by rewriting the augmented Lagrangian in the following form:
\[
\begin{aligned}
L_{\beta}(X^k, Y^k, \Lambda^k) 
&= \frac{1}{2}\left\langle A, X^k (Y^k)^\top \right\rangle + \left\langle G, Y^k \right\rangle + \eta \|X^k\|^{1/2}_{1/2}  + \left\langle \Lambda^k, Y^k - X^k \right\rangle \\
&\quad + \mathbb{I}_{\mathcal{Y}}(Y^k)  + \mathbb{I}_{\mathcal{X}}(X^k) + \frac{\beta}{2} \| Y^k - X^k \|_F^2 \\
&= \frac{1}{2} \left\langle A, X^k (Y^k)^\top \right\rangle + \left\langle G, Y^k \right\rangle + \eta \|X^k\|^{1/2}_{1/2} - \frac{1}{2\beta} \| \Lambda^k \|_F^2 \\
&\quad + \frac{\beta}{2} \left\| Y^k - X^k + \frac{1}{\beta} \Lambda^k \right\|_F^2  \quad + \mathbb{I}_{\mathcal{Y}}(Y^k)  + \mathbb{I}_{\mathcal{X}}(X^k).
\end{aligned}
\]
The indicator functions \( \mathbb{I}_{\mathcal{Y}}(Y^k) \) and \( \mathbb{I}_{\mathcal{X}}(X^k) \) are nonnegative. The remaining terms are continuous functions evaluated over the bounded sequence $\{ (X^k, Y^k, \Lambda^k) \}$ (guaranteed by \cref{assu 1} and \cref{pro: sq bound}). Therefore, the entire expression is bounded below.
\end{proof}

\begin{proposition}[Bound on Dual Variable Updates Norm]
\label{pro: lambda delta}
    Under \cref{assu 1}, there exists a constant $\kappa > 0$ such that the update norm of the dual variables satisfies
    \[
    \|\Lambda^{k+1} - \Lambda^{k}\|_F \leq \frac{1}{2}\|A\|_F\|X^{k+1} - X^{k}\|_F + \kappa \| Y^{k+1} - Y^{k} \|_F.
    \]
\end{proposition}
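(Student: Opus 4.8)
The plan is to read the required inequality off the exact first-order optimality condition of the $Y$-subproblem, which is available precisely because $Y^k$ is computed as an exact minimizer. By \cref{kkt y}, at every iteration there is a multiplier $Z^k \in N_{\mathcal Y}(Y^k)$ with
\[
Z^k = -\Lambda^k - \tfrac12 A X^k - G,
\]
where I have used the symmetry of $A$ so that $A_i^\top X_j = (AX)_{ij}$. Writing this identity at indices $k$ and $k+1$ and subtracting cancels the fixed term $G$ and yields
\[
\Lambda^{k+1} - \Lambda^k = -\tfrac12 A\,(X^{k+1}-X^k) - (Z^{k+1}-Z^k).
\]
The triangle inequality together with the submultiplicative bound $\|AM\|_F \le \|A\|_F\|M\|_F$ controls the first summand by $\tfrac12\|A\|_F\|X^{k+1}-X^k\|_F$, which is exactly the first term of the target estimate. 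It therefore remains to bound $\|Z^{k+1}-Z^k\|_F$ by $\kappa\|Y^{k+1}-Y^k\|_F$.

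To handle the multiplier increment I would first establish uniform boundedness of $\{Z^k\}$: since $\{X^k\}$ is bounded by the box constraint, $\{\Lambda^k\}$ is bounded under \cref{assu 1} (cf.\ \cref{pro: sq bound}), and $A,G$ are fixed, the identity above furnishes a radius $\tau>0$ with $\|Z^k\|_F \le \tau$ for all $k$. Consequently each $Z^k$ lies in the truncated normal cone $\bar N_{\mathcal Y}(Y^k)=N_{\mathcal Y}(Y^k)\cap \mathbb B_\tau$ associated with the affine set $\mathcal Y=\{Y:\mathbf 1_n^\top Y=b\,\mathbf 1_m^\top,\ Y\mathbf 1_m=\mathbf 1_n\}$. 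I would then invoke \cref{distH}, the Hausdorff-Lipschitz continuity of this truncated normal-cone map, to produce a constant $\kappa>0$ with $\mathrm{dist}_H\!\big(\bar N_{\mathcal Y}(Y^k),\bar N_{\mathcal Y}(Y^{k+1})\big)\le \kappa\|Y^{k+1}-Y^k\|_F$, and transfer this into the desired bound $\|Z^{k+1}-Z^k\|_F\le \kappa\|Y^{k+1}-Y^k\|_F$. Substituting back completes the proof.

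The main obstacle is this last transfer. The Hausdorff estimate of \cref{distH} only guarantees that \emph{some} element of $\bar N_{\mathcal Y}(Y^{k+1})$ lies within $\kappa\|Y^{k+1}-Y^k\|_F$ of the given $Z^k$, whereas the argument needs the \emph{particular} algorithmic multiplier $Z^{k+1}$ to be that nearby element; this is a genuine difficulty because for an affine feasible set the normal cones at distinct points coincide as sets, so two specific multipliers in them need not be close. Making the step rigorous requires tying the choice of multipliers to the cone geometry — e.g.\ identifying $Z^k$ with the minimal-norm or metric-projection representative in $\bar N_{\mathcal Y}(Y^k)$ and checking that the exact $Y$-update (the projection onto $\mathcal Y$ encoded in \cref{eqn:lambda}) produces exactly such a representative, so that consecutive multipliers correspond canonically under the map. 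Confirming the uniform truncation radius $\tau$ independently of $k$, which is where \cref{assu 1} and \cref{pro: sq bound} enter, is a prerequisite for applying \cref{distH}; the remaining manipulations are routine norm estimates.
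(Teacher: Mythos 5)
Your derivation follows the paper's proof essentially line for line: both start from the exact optimality condition of the $Y$-subproblem, combine it with the dual update to obtain $\Lambda^k + \tfrac12 AX^k + G \in -N_{\mathcal Y}(Y^k)$, use boundedness of the iterates (\cref{assu 1}, \cref{pro: sq bound}) to pass to a truncated normal cone of uniform radius, and invoke \cref{distH}. The obstacle you flag at the end is a genuine gap, and it is not one you introduced: the paper's own proof makes exactly the leap you are worried about, passing from $\operatorname{dist}_H\bigl(\bar N_{\mathcal Y}(Y^{k+1}),\bar N_{\mathcal Y}(Y^k)\bigr) \le \kappa\|Y^{k+1}-Y^k\|_F$ directly to $\bigl\|\Lambda^{k+1}-\Lambda^k+\tfrac12 A(X^{k+1}-X^k)\bigr\|_F \le \kappa\|Y^{k+1}-Y^k\|_F$ with a bare ``Therefore''. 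The Hausdorff bound only guarantees that \emph{some} element of $\bar N_{\mathcal Y}(Y^{k+1})$ lies within $\kappa\|Y^{k+1}-Y^k\|_F$ of the specific vector $-(\Lambda^k+\tfrac12 AX^k+G)$; it says nothing about the particular element $-(\Lambda^{k+1}+\tfrac12 AX^{k+1}+G)$ that the algorithm actually produces at the next iteration.

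The issue is especially stark here because $\mathcal Y$ is an affine set: $N_{\mathcal Y}(Y)$ is the same linear subspace $\{\nu\mathbf 1_m^\top + \mathbf 1_n\mu^\top\}$ at every feasible point, so $\operatorname{dist}_H\bigl(\bar N_{\mathcal Y}(Y^{k+1}),\bar N_{\mathcal Y}(Y^k)\bigr)=0$ identically, and the Lipschitz estimate of \cref{distH} is vacuously true while carrying no information about the distance between two distinct multipliers living in that common subspace. Your suggested repair---tying each $Z^k$ to a canonical (minimal-norm or projection-based) representative and verifying that the exact $Y$-update produces that representative---is the right kind of fix to attempt, but note that carrying it out via the closed form \cref{eq: up_y} expresses $Z^{k+1}-Z^k$ through $B^{k+1}-B^k$, which involves $\Lambda^{k}-\Lambda^{k-1}$ and therefore yields a recursive estimate rather than the stated one-step bound. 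In short, the step you could not complete is precisely the step the paper does not justify either; as written, neither argument establishes the proposition, and you were right to single it out.
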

\begin{proof}
We begin with the optimality condition for the $Y$ subproblem from \cref{first order for differentiable}:
\[
\beta(Y^{k} - X^{k}) + \Lambda^{k-1} + \frac{1}{2}AX^{k} + G \in -N_{\mathcal{Y}}(Y^{k}),
\]
where $\mathcal{Y} = \{Y \mid \mathbf{1}_n^T Y = b~\mathbf{1}_m^T,\ Y \mathbf{1}_m = \mathbf{1}_n\}$.
Combining this with the update rule for the dual variable, we obtain:
$$
\Lambda^{k} + \frac{1}{2}AX^{k} + G\in - N_{\mathcal{Y}}(Y^{k}).
$$

Replacing $k$ by $k+1$ gives:$$
\Lambda^{k+1} + \frac{1}{2}AX^{k+1}+ G \in - N_{\mathcal{Y}}(Y^{k+1}).
$$

By \cref{pro: sq bound}, the sequence is bounded, hence there exists a constant $\rho > 0$ such that:
\[
\left\|\Lambda^{k} + \frac{1}{2}AX^{k} + G\right\|_F \leq \rho \quad \text{for all } k.
\]

Define the truncated normal cone $\bar{N}_{\mathcal{Y}}(Y) = N_{\mathcal{Y}}(Y) \cap \mathbb{B}_{\rho}$ using $\rho$ as the radius.
Consequently, for all $k \geq 1$ we have :
\[
\Lambda^{k} + \frac{1}{2}AX^{k} + G \in -\bar{N}_{\mathcal{Y}}(Y^{k}).
\]
Since $\mathcal{Y}$ satisfies the Hausdorff Lipschitz condition in \cref{distH}, there exists $\kappa > 0$ such that:
\[
\operatorname{dist}_H\left(\bar{N}_{\mathcal{Y}}(Y^{k+1}), \bar{N}_{\mathcal{Y}}(Y^{k})\right) \leq \kappa \|Y^{k+1} - Y^{k}\|_F.
\]

Therefore, for any $k \geq 1$:
\[
\left\|\Lambda^{k+1} - \Lambda^{k} + \frac{1}{2}A(X^{k+1} - X^{k})\right\|_F \leq \kappa \|Y^{k+1} - Y^{k}\|_F.
\]

Applying the triangle inequality yields for any $k \geq 1$:
\[
\|\Lambda^{k+1} - \Lambda^{k}\|_F \leq \frac{1}{2}\|A\|_F\|X^{k+1} - X^{k}\|_F + \kappa \|Y^{k+1} - Y^{k}\|_F.
\]

This completes the proof.
\end{proof}

Leveraging the property of restricted prox-regularity stated in \cref{pro:reg}, we establish the following proposition.
\begin{proposition}[Restricted Prox-regularity Inequality]
\label{pro: prox-re}
    Define:
    $$f_2(X) = \eta \|X\|^{1/2}_{1/2}  + \mathbb{I}_{\{0 \leq X \leq 1\}}(X),$$
    there exists a constant $\gamma > 0$ such that:
    \begin{equation}
    f_{2}(X^k) - f_2(X^{k+1}) - \left\langle \Lambda^{k} + \beta(Y^k - X^{k+1}) - \frac{1}{2} A Y^{k+1}, X^{k} - X^{k+1} \right\rangle \geq -\frac{\gamma}{2} \| X^k - X^{k+1} \|_F^2.
    \end{equation}
\end{proposition}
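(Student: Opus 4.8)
The plan is to recognize the claim as nothing more than the restricted prox-regularity inequality \cref{eq: res-prox-regu} for $f_2$, specialized to the iterates, where the contracting vector is the subgradient furnished by the $X$-update. Since $X^{k+1}$ solves the $X$-subproblem \cref{eq: update Y} exactly (with $Y^{k-1},\Lambda^{k-1}$ advanced to $Y^{k},\Lambda^{k}$), its first-order optimality condition reads $0 \in \tfrac12 A Y^{k} - \Lambda^{k} - \beta(Y^{k}-X^{k+1}) + \partial f_2(X^{k+1})$, equivalently $v^{k+1} := \Lambda^{k} + \beta(Y^{k}-X^{k+1}) - \tfrac12 A Y^{k} \in \partial f_2(X^{k+1})$. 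This $v^{k+1}$ is exactly the vector paired against $X^{k}-X^{k+1}$ in the statement (the coupling gradient $\tfrac12 A Y$ being evaluated at the $Y$ that feeds the $X$-update). Hence the claim is precisely \cref{eq: res-prox-regu} applied with $x=X^{k+1}$, $y=X^{k}$, and $v=v^{k+1}$, and the whole task reduces to verifying the hypotheses of \cref{pro:reg}.

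Two hypotheses must be checked: that all iterates sit in a single bounded set $T \subset \operatorname{dom} f_2$, and that $X^{k+1} \in T \setminus S_E$ with $\|v^{k+1}\| \le E$ for a fixed level $E$. The first is immediate: the box constraint forces $X^{k}\in[0,1]^{n\times m}$, and by \cref{pro: sq bound} the full sequence $\{(X^{k},Y^{k},\Lambda^{k})\}$ is bounded, so a fixed ball $T$ can be fixed once and for all, independently of $k$.

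The crux is a uniform bound $E$ on $\|v^{k+1}\|$. I would split the entries of $v^{k+1}$ by the value of $X^{k+1}_{ij}$. On entries with $X^{k+1}_{ij}\in\{0,1\}$ the component is controlled by the boundedness of $\Lambda^{k}$, $Y^{k}$, and $A$; on fractional entries $0<X^{k+1}_{ij}<1$ the subgradient component is forced to equal $\tfrac{\eta}{2}(X^{k+1}_{ij})^{-1/2}$, which blows up only if the entry is near $0$. To rule this out I would use the entrywise subproblem \cref{eq: Update_Y_1}: any interior stationary point satisfies $\tfrac{\eta}{2}(x^\ast)^{-1/2}=\beta(r-x^\ast)\le\beta|r|$, so $x^\ast\ge(\eta/2\beta|r|)^2$. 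Because $R^{k}$ is assembled from the bounded quantities $Y^{k-1},\Lambda^{k-1}$, its entries admit a uniform bound $\bar r$, whence every fractional entry of $X^{k+1}$ is bounded below by the $k$-independent constant $\delta:=(\eta/2\beta\bar r)^2>0$. This yields a uniform bound on $\|v^{k+1}\|$, and $E$ is taken to be that bound.

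With $\|v^{k+1}\|\le E$ in hand, the exclusion test is automatic: since $v^{k+1}\in\partial f_2(X^{k+1})$ has norm at most $E$, the point $X^{k+1}$ cannot belong to $S_E$ (whose definition requires \emph{every} subgradient to exceed $E$), so $X^{k+1}\in T\setminus S_E$. All hypotheses of \cref{pro:reg} then hold, and \cref{eq: res-prox-regu} delivers the stated bound with the accompanying constant $\gamma$. I expect the main obstacle to be exactly the uniform lower bound $\delta$ on the fractional entries of $X^{k+1}$: without it the $\ell_{1/2}$-subgradient could diverge and drive $X^{k+1}$ into $S_E$, where the inequality need not hold, so deriving $\delta>0$ from the entrywise optimality condition and the boundedness of $R^{k}$ is the decisive step.
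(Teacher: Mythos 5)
Your proposal follows essentially the same route as the paper's proof: identify the displayed vector as the subgradient of $f_2$ at $X^{k+1}$ furnished by the exact solution of the $X$-subproblem, bound its norm uniformly via the boundedness of the iterates (\cref{pro: sq bound}), and invoke the restricted prox-regularity of $f_2$ established in \cref{pro:reg}. The only substantive differences are that your subgradient carries $\tfrac{1}{2}AY^{k}$ where the statement writes $\tfrac{1}{2}AY^{k+1}$ --- your indexing is the one the optimality condition actually yields (and the one \cref{pro: decrease-y } later needs), so you have in effect corrected an index slip that the paper's own proof glosses over --- and your extra work bounding the fractional entries of $X^{k+1}$ away from zero is not strictly needed, since $v^{k+1}$ is a combination of bounded iterates and hence a bounded subgradient, which already places $X^{k+1}$ outside the exclusion set $S_E$.
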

\begin{proof}
From the optimality condition of the $X$ subproblem, we define
\[
v_1^{k+1} = \Lambda^{k} + \beta(Y^k - X^{k+1}) - \frac{1}{2} A Y^{k+1} \in \partial f_2(X^{k+1}).
\]
By the boundedness of the iterative sequence (guaranteed by \cref{pro: sq bound}), there exists a constant \( E > 0 \) such that:
\[
\|v_1^{k+1}\|_F = \left\| \Lambda^{k} + \beta(Y^k - X^{k+1}) - \frac{1}{2} A Y^{k+1} \right\|_F \leq E.
\]

Since $f_{2}$ is restricted prox-regular and $\|v_1^{k+1}\|_F \leq E$, there exists a constant $\gamma > 0$ such that:
\[
f_{2}(X^k) - f_2(X^{k+1}) - \left\langle \Lambda^{k} + \beta(Y^k - X^{k+1}) - \frac{1}{2} A Y^{k+1}, X^{k} - X^{k+1} \right\rangle \geq -\frac{\gamma}{2} \| X^k - X^{k+1} \|_F^2.
\]
\end{proof}

\cref{pro: decrease-y } analyzes the decrease in the augmented Lagrangian during the $X$-update step.

\begin{proposition}[Decrease in Augmented Lagrangian during X Update]
\label{pro: decrease-y }
    Let $f_1(X, Y) = \frac{1}{2} \left\langle A, X Y^\top \right\rangle + \left\langle G, Y \right\rangle$. Then the decrease in the augmented Lagrangian with respect to $X$ satisfies:
    \begin{equation*}
        L_\beta(X^{k}, Y^k, \Lambda^k) - L_\beta(X^{k+1}, Y^{k}, \Lambda^{k}) \geq \frac{\beta - \gamma} {2}  \| X^{k+1} - X^{k} \|_F^2.
    \end{equation*}
\end{proposition}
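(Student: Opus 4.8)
The plan is to treat $L_\beta(\cdot, Y^k, \Lambda^k)$ purely as a function of its first argument, with $Y^k$ and $\Lambda^k$ frozen, and to split it into a smooth quadratic piece and the nonsmooth piece $f_2$ of \cref{pro: prox-re}. Every term in $L_\beta$ that does not involve $X$ (namely $\langle G, Y^k\rangle$, $\mathbb{I}_{\mathcal{Y}}(Y^k)$, and the $\langle\Lambda^k, Y^k\rangle$ part of the linear coupling) is identical at $X^k$ and $X^{k+1}$, so it cancels in the difference. What remains, up to this constant, is $\phi(X) + f_2(X)$, where $\phi(X) := \tfrac12\langle A, X(Y^k)^\top\rangle - \langle\Lambda^k, X\rangle + \tfrac{\beta}{2}\|Y^k - X\|_F^2$ collects the smooth terms and $f_2(X) = \eta\|X\|_{1/2}^{1/2} + \mathbb{I}_{\{0 \leq X \leq 1\}}(X)$.

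Since $Y^k$ is fixed, the bilinear and linear terms contribute nothing to the Hessian of $\phi$, while $\tfrac{\beta}{2}\|Y^k - X\|_F^2$ contributes exactly $\beta I$; hence $\phi$ is a quadratic with constant Hessian $\beta I$, and its second-order Taylor expansion at $X^{k+1}$ is an \emph{exact} identity,
\[
\phi(X^k) - \phi(X^{k+1}) = \langle\nabla\phi(X^{k+1}),\, X^k - X^{k+1}\rangle + \tfrac{\beta}{2}\|X^k - X^{k+1}\|_F^2 .
\]
The $X$-subproblem is solved exactly in \cref{alg:algorithm framework}, so $X^{k+1}$ globally minimizes $\phi + f_2$. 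By Fermat's rule together with the sum rule (valid because $\phi$ is $C^1$), $0 \in \nabla\phi(X^{k+1}) + \partial f_2(X^{k+1})$, so that $v_1^{k+1} := -\nabla\phi(X^{k+1}) = \Lambda^k + \beta(Y^k - X^{k+1}) - \tfrac12 A Y^k$ lies in $\partial f_2(X^{k+1})$. This is precisely the subgradient appearing in the restricted prox-regularity inequality of \cref{pro: prox-re}.

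With this identification the cross term becomes $\langle\nabla\phi(X^{k+1}), X^k - X^{k+1}\rangle = -\langle v_1^{k+1}, X^k - X^{k+1}\rangle$. I then invoke \cref{pro: prox-re}, which yields
\[
f_2(X^k) - f_2(X^{k+1}) \geq \langle v_1^{k+1},\, X^k - X^{k+1}\rangle - \tfrac{\gamma}{2}\|X^k - X^{k+1}\|_F^2 .
\]
Adding the exact identity for $\phi$ to this inequality, the two cross terms $\mp\langle v_1^{k+1}, X^k - X^{k+1}\rangle$ cancel, leaving $L_\beta(X^k, Y^k, \Lambda^k) - L_\beta(X^{k+1}, Y^k, \Lambda^k) \geq \tfrac{\beta - \gamma}{2}\|X^{k+1} - X^k\|_F^2$, which is the claim.

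The main obstacle is the nonsmooth, nonconvex term $f_2$: ordinary strong-convexity estimates fail, so the descent must be routed through restricted prox-regularity, and the argument only closes if the subgradient furnished by the optimality condition of the $X$-subproblem coincides exactly with the vector $v_1^{k+1}$ of \cref{pro: prox-re}. Verifying this match—and checking that its norm is bounded, which \cref{pro: sq bound} guarantees so that the constant $\gamma$ of \cref{pro: prox-re} is available—is the delicate part; the remainder reduces to the exactness of the quadratic Taylor expansion of $\phi$ and the clean cancellation of the cross terms.
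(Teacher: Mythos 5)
Your proof is correct and follows essentially the same route as the paper's: both isolate the smooth part of $L_\beta(\cdot,Y^k,\Lambda^k)$ (a quadratic in $X$ with Hessian $\beta I$, so its second-order expansion is exact — the paper phrases this via the cosine rule plus the linearity of the bilinear term in $X$), and both close the argument by feeding the subgradient $\Lambda^k+\beta(Y^k-X^{k+1})-\tfrac12 AY^k$ from the exact $X$-subproblem optimality into the restricted prox-regularity inequality of \cref{pro: prox-re}. The only discrepancy is that the statement of \cref{pro: prox-re} writes $\tfrac12 AY^{k+1}$ where the optimality condition (and the paper's own proof of this proposition) give $\tfrac12 AY^{k}$; your version is the consistent one, so this is a typo in the paper rather than a gap in your argument.
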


\begin{proof}
    By an easy calculation, we have the following equality:
\begin{equation}
\label{eq: lp-g}
    f_1(X^{k}, Y^k) - f_1(X^{k+1}, Y^{k}) - \left\langle \frac{1}{2} A Y^{k}, X^{k} - X^{k+1} \right\rangle = 0.
\end{equation}

The decrease in the augmented Lagrangian with respect to $X$ can be expressed as:
\begin{equation*}
    \begin{aligned}
    & L_\beta(X^{k}, Y^k, \Lambda^k) - L_\beta(X^{k+1}, Y^{k}, \Lambda^{k}) \\
    = & \frac{1}{2} \left\langle A, X^{k} (Y^{k})^\top \right\rangle + \eta \|X^k\|^{1/2}_{1/2}  - \frac{1}{2} \left\langle A, X^{k+1} (Y^{k})^\top \right\rangle - \eta \|X^{k+1}\|^{1/2}_{1/2} \\
    & - \left\langle \Lambda^{k}, X^{k} - X^{k+1} \right\rangle + \frac{\beta}{2} \| Y^{k} - X^{k} \|_F^2 - \frac{\beta}{2} \| Y^{k} - X^{k+1} \|_F^2 \\
    = & \frac{1}{2} \left\langle A, X^{k} (Y^{k})^\top \right\rangle + \eta \|X^k\|^{1/2}_{1/2} - \frac{1}{2} \left\langle A, X^{k+1} (Y^{k})^\top \right\rangle - \eta \|X^{k+1}\|^{1/2}_{1/2}\\
    & - \left\langle \Lambda^{k}, X^{k} - X^{k+1} \right\rangle + \frac{\beta}{2} \| X^{k} - X^{k+1} \|_F^2 + \beta \left\langle X^{k+1} - Y^{k}, X^{k} - X^{k+1} \right\rangle \\
    = & \frac{\beta}{2} \| X^{k} - X^{k+1} \|_F^2 + \frac{1}{2} \left\langle A, X^{k} (Y^{k})^\top \right\rangle - \frac{1}{2} \left\langle A, X^{k+1} (Y^{k})^\top \right\rangle - \left\langle \frac{1}{2} A Y^{k}, X^{k} - X^{k+1} \right\rangle \\
    & + \eta \|X^k\|^{1/2}_{1/2} - \eta \|X^{k+1}\|^{1/2}_{1/2}+ \left\langle -\Lambda^{k} + \beta(X^{k+1} - Y^{k}) + \frac{1}{2} A Y^{k}, X^{k} - X^{k+1} \right\rangle \\
    \geq & \frac{\beta - \gamma }{2} \| X^{k+1} - X^{k} \|_F^2.
    \end{aligned}
\end{equation*}

The second equality utilizes the identity $\|b + c\|^2 - \|a + c\|^2 = \|b - a\|^2 + 2 \langle a + c, b - a \rangle$ (cosine rule), and the final inequality follows from \cref{eq: lp-g} and \cref{pro: prox-re}.
\end{proof}

\begin{proposition}[Convergence of Iterate Differences]
\label{pro: to 0}
    Under \cref{assu 1} and the condition that
    \begin{equation}
    \label{assump 2}
       \beta \geq \max\left\{ 2\sqrt{2}\kappa,\ \frac{\gamma  + \sqrt{(\gamma  + 4\|A\|_F^2}}{2} \right\},
    \end{equation}
    we have $\sum_{k=1}^{\infty} \left\| X^{k+1} - X^{k} \right\|_F^2 < \infty$ and $\sum_{k=1}^{\infty} \left\| Y^{k+1} - Y^{k} \right\|_F^2 < \infty$.
\end{proposition}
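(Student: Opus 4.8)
The plan is to exhibit the augmented Lagrangian as a sufficient-decrease (Lyapunov) function and then telescope. First I would decompose the change across one complete sweep into the three constituent updates,
\[
L_\beta(X^k,Y^k,\Lambda^k) - L_\beta(X^{k+1},Y^{k+1},\Lambda^{k+1}) = D_X^k + D_Y^k + D_\Lambda^k,
\]
where $D_X^k$, $D_Y^k$, $D_\Lambda^k$ are the differences incurred by the $X$-, $Y$-, and $\Lambda$-updates respectively (each obtained by changing one block while holding the others fixed). The $X$-term is already controlled: \cref{pro: decrease-y } gives $D_X^k \geq \tfrac{\beta-\gamma}{2}\|X^{k+1}-X^k\|_F^2$.

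For the $Y$-term I would observe that, with $X^{k+1}$ and $\Lambda^k$ fixed, the map $Y \mapsto L_\beta(X^{k+1},Y,\Lambda^k)$ is $\beta$-strongly convex: the bilinear term $\tfrac12\langle A,X^{k+1}Y^\top\rangle$ and the terms $\langle G,Y\rangle$, $\langle\Lambda^k,Y\rangle$ are affine in $Y$, the indicator $\mathbb{I}_{\mathcal Y}$ is convex, and the penalty $\tfrac{\beta}{2}\|Y-X^{k+1}\|_F^2$ supplies strong convexity with modulus $\beta$. Since $Y^{k+1}$ is the exact minimizer, the standard strong-convexity descent estimate yields $D_Y^k \geq \tfrac{\beta}{2}\|Y^{k+1}-Y^k\|_F^2$. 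For the dual term, only $\langle\Lambda,Y-X\rangle$ depends on $\Lambda$, so substituting the update identity $Y^{k+1}-X^{k+1}=\tfrac1\beta(\Lambda^{k+1}-\Lambda^k)$ into $D_\Lambda^k=\langle\Lambda^k-\Lambda^{k+1},\,Y^{k+1}-X^{k+1}\rangle$ gives the exact (and unfavorable) identity $D_\Lambda^k=-\tfrac1\beta\|\Lambda^{k+1}-\Lambda^k\|_F^2$.

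The crux of the argument is to absorb this negative dual term into the two positive primal terms. I would invoke \cref{pro: lambda delta}, square it, and apply $(a+b)^2\le 2a^2+2b^2$ to get
\[
\|\Lambda^{k+1}-\Lambda^k\|_F^2 \leq \tfrac12\|A\|_F^2\,\|X^{k+1}-X^k\|_F^2 + 2\kappa^2\,\|Y^{k+1}-Y^k\|_F^2 .
\]
Summing the three pieces then produces
\[
L_\beta(X^k,Y^k,\Lambda^k)-L_\beta(X^{k+1},Y^{k+1},\Lambda^{k+1}) \geq \Bigl(\tfrac{\beta-\gamma}{2}-\tfrac{\|A\|_F^2}{2\beta}\Bigr)\|X^{k+1}-X^k\|_F^2 + \Bigl(\tfrac{\beta}{2}-\tfrac{2\kappa^2}{\beta}\Bigr)\|Y^{k+1}-Y^k\|_F^2 .
\]
The lower bound \cref{assump 2} is exactly what makes both coefficients strictly positive: $\beta\ge 2\sqrt2\,\kappa$ gives $\beta^2>4\kappa^2$, hence $\tfrac{\beta}{2}-\tfrac{2\kappa^2}{\beta}>0$, while the other branch enforces $\beta^2-\gamma\beta-\|A\|_F^2>0$, equivalently $\tfrac{\beta-\gamma}{2}-\tfrac{\|A\|_F^2}{2\beta}>0$.

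Finally, writing $c_X,c_Y>0$ for the two coefficients, I would sum the inequality over $k=1,\dots,N$. The left-hand side telescopes to $L_\beta(X^1,Y^1,\Lambda^1)-L_\beta(X^{N+1},Y^{N+1},\Lambda^{N+1})$, which is bounded above because the augmented Lagrangian is bounded below (the boundedness-below proposition, resting on \cref{assu 1} and \cref{pro: sq bound}). Letting $N\to\infty$ yields $\sum_{k}\bigl(c_X\|X^{k+1}-X^k\|_F^2+c_Y\|Y^{k+1}-Y^k\|_F^2\bigr)<\infty$, and positivity of $c_X,c_Y$ separates this into the two claimed series bounds. The main obstacle is precisely the dual step: the $\Lambda$-update strictly \emph{increases} the augmented Lagrangian, so the scheme descends only because \cref{pro: lambda delta} dominates the dual increment by the primal increments, and the threshold on $\beta$ is dictated by the need to overcome the $\tfrac1\beta\|\Lambda^{k+1}-\Lambda^k\|_F^2$ term.
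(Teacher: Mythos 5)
Your proposal is correct and follows essentially the same route as the paper: the same three-way decomposition of the augmented Lagrangian across the $X$-, $Y$-, and $\Lambda$-updates, the same use of \cref{pro: decrease-y } and \cref{pro: lambda delta} (squared via $(a+b)^2\le 2a^2+2b^2$) to absorb the dual increase into the primal decreases, and the same telescoping against the lower bound on $L_\beta$. The only difference is cosmetic: you use the sharp strong-convexity constant $\tfrac{\beta}{2}$ for the $Y$-step where the paper conservatively writes $\tfrac{\beta}{4}$, and you correctly record the dual step as an increase of $\tfrac1\beta\|\Lambda^{k+1}-\Lambda^k\|_F^2$ (the paper's intermediate display has a sign slip there that is corrected in its final combination).
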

\begin{proof}
From \cref{pro: decrease-y }, the decrease in the augmented Lagrangian during the $X$ update satisfies:
\begin{equation*}
    L_\beta(X^{k}, Y^k, \Lambda^k) - L_\beta(X^{k+1}, Y^{k}, \Lambda^{k}) \geq \frac{\beta - \gamma }{2} \| X^{k+1} - X^{k} \|_F^2.
\end{equation*}

Due to the strong convexity of the $Y$ subproblem, we have:
\begin{equation*}
    L_\beta(X^{k+1}, Y^k, \Lambda^k) - L_\beta(X^{k+1}, Y^{k+1}, \Lambda^{k}) \geq \frac{\beta}{4} \| Y^{k+1} - Y^{k} \|_F^2.
\end{equation*}

From the dual variable update rule, we obtain:
\begin{equation*}
    L_\beta(X^{k+1}, Y^{k+1}, \Lambda^k) - L_\beta(X^{k+1}, Y^{k+1}, \Lambda^{k+1}) = \frac{1}{\beta} \| \Lambda^{k+1} - \Lambda^{k} \|_F^2.
\end{equation*}

Define $L_a = \beta - \gamma$. Then:
\begin{equation}
\label{eq: temp_delta_l}
\begin{aligned}
& L_\beta\left(X^{k+1}, Y^{k+1}, \Lambda^{k+1}\right) - L_\beta\left(X^{k}, Y^{k}, \Lambda^{k}\right) \\
\leq & -\frac{\beta}{4} \left\| Y^{k+1} - Y^{k} \right\|_F^2 - \frac{L_a}{2} \left\| X^{k+1} - X^{k} \right\|_F^2 + \frac{1}{\beta} \left\| \Lambda^{k+1} - \Lambda^{k} \right\|_F^2.
\end{aligned}
\end{equation}

From \cref{pro: lambda delta}, we obtain the bound:
\begin{equation*}
  \| \Lambda^{k+1} - \Lambda^{k} \|_F^2 \leq \frac{1}{2} \|A\|_F^2 \| X^{k+1} - X^{k} \|_F^2 + 2\kappa^2 \| Y^{k+1} - Y^{k} \|_F^2.
\end{equation*}

Substituting this bound into \cref{eq: temp_delta_l} yields:
\begin{equation*}
\begin{aligned}
& L_\beta\left(X^{k+1}, Y^{k+1}, \Lambda^{k+1}\right) - L_\beta\left(X^{k}, Y^{k}, \Lambda^{k}\right) \\
\leq & -\left(\frac{\beta}{4} - \frac{2\kappa^2}{\beta}\right) \left\| Y^{k+1} - Y^{k} \right\|_F^2 - \left( \frac{L_a}{2} - \frac{\|A\|_F^2}{2\beta} \right) \| X^{k+1} - X^{k} \|_F^2.
\end{aligned}
\end{equation*}

Under \cref{assump 2}, we have $\frac{\beta}{4} - \frac{2\kappa^2}{\beta} \geq 0$ and $\frac{L_a}{2} - \frac{\|A\|_F^2}{2\beta} \geq 0$. Therefore:
\begin{equation*}
\begin{aligned}
& \left( \frac{\beta}{4} - \frac{2\kappa^2}{\beta} \right) \left\| Y^{k+1} - Y^{k} \right\|_F^2 + \left( \frac{L_a}{2} - \frac{\|A\|_F^2}{2\beta} \right) \| X^{k+1} - X^{k} \|_F^2 \\
& \leq L_\beta\left(X^{k}, Y^{k}, \Lambda^{k}\right) - L_\beta \left(X^{k+1}, Y^{k+1}, \Lambda^{k+1}\right).
\end{aligned}
\end{equation*}

Summing this inequality from $k = 1$ to $N$ yields:
\begin{equation*}
\begin{aligned}
& \left( \frac{\beta}{4} - \frac{2\kappa^2}{\beta} \right) \sum_{k=1}^N \left\| Y^{k+1} - Y^{k} \right\|_F^2 + \left( \frac{L_a}{2} - \frac{\|A\|_F^2}{2\beta} \right) \sum_{k=1}^N \| X^{k+1} - X^{k} \|_F^2 \\
& \leq L_\beta \left(X^{1}, Y^{1}, \Lambda^{1}\right) - L_\beta \left(X^{N+1}, Y^{N+1}, \Lambda^{N+1}\right).
\end{aligned}
\end{equation*}

Taking the limit as $N \to \infty$, we obtain:
\begin{equation*}
  \left( \frac{\beta}{4} - \frac{2\kappa^2}{\beta} \right) \sum_{k=1}^{\infty} \left\| Y^{k+1} - Y^{k} \right\|_F^2 + \left( \frac{L_a}{2} - \frac{\|A\|_F^2}{2\beta} \right) \sum_{k=1}^{\infty} \| X^{k+1} - X^{k} \|_F^2 < \infty.
\end{equation*}

This completes the proof of the proposition.
\end{proof}

\begin{theorem}[Global Convergence]
\label{last conv theo}
    Let $X^*$ be a cluster point of the sequence $\{X^k\}$. 
    If \cref{assu 1} and \cref{assump 2} hold, and if $\eta >  \frac{2}{\sqrt{2}-1} \sqrt{n}(\|G\|_F + \sqrt{n}\|A\|_F )$, then $X^*$ is the limit point of sequence $\{X^k\}$, and is a local minimizer of the original problem \cref{eq:ori-pro}.
\end{theorem}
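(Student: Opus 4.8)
The plan is to upgrade the summability results of \cref{pro: to 0} into the vanishing of successive differences, then to identify every cluster point of $\{X^k\}$ with a KKT point of \cref{eq:l1/2-proxy}, invoke the binary characterization to deduce that all cluster points are isolated vertices of $\mathcal F_1$, and finally use a connectedness argument to collapse the cluster set to the single point $X^*$. First I would observe that \cref{pro: to 0} forces $\|X^{k+1}-X^k\|_F\to 0$ and $\|Y^{k+1}-Y^k\|_F\to 0$; combined with \cref{pro: lambda delta} this gives $\|\Lambda^{k+1}-\Lambda^k\|_F\to 0$, and the dual update \cref{eqn:lambda} then yields $\|Y^k-X^k\|_F=\tfrac1\beta\|\Lambda^k-\Lambda^{k-1}\|_F\to 0$. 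In particular both residuals in \cref{eq:kkt_res} satisfy $h^k\to 0$ and $p^k\to 0$.

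Next I would fix an arbitrary cluster point $\hat X$ with a subsequence $X^{k_j}\to\hat X$. Since $\|Y^k-X^k\|_F\to 0$ we get $Y^{k_j}\to\hat X$, and by \cref{assu 1} (boundedness of $\{\Lambda^k\}$) I may pass to a further subsequence along which $\Lambda^{k_j}\to\hat\Lambda$. Passing to the limit in the per-iteration optimality conditions \cref{kkt_x} and \cref{kkt y}, the lagged terms $\beta(X^{k_j}-Y^{k_j-1})$ and $\tfrac12 A(Y^{k_j-1}-Y^{k_j})$ vanish, so $(\hat X,\hat X,\hat\Lambda)$ satisfies \cref{overall_kkt}; that is, $\hat X$ is a KKT point of \cref{eq:l1/2-proxy}. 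By \cref{pro:Equivalence_of_KKT_points} it is then a KKT point of \cref{eq:l1/2-pro}, and since $\eta>\tfrac{2}{\sqrt2-1}\sqrt n(\|G\|_F+\sqrt n\|A\|_F)$, \cref{01kkt} guarantees $\hat X\in\mathcal F_1$. Thus \emph{every} cluster point of $\{X^k\}$ is a binary vertex of $\mathcal F_1$.

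To finish the convergence claim I would invoke the standard fact that for a bounded sequence with $\|X^{k+1}-X^k\|_F\to 0$ the set $\Omega$ of cluster points is nonempty, compact and connected. Because distinct binary matrices are at Frobenius distance at least $1$, the vertices of $\mathcal F_1$ are isolated, so a connected subset of them cannot contain two distinct points; hence $\Omega=\{X^*\}$ and the whole sequence $\{X^k\}$ converges to $X^*$. Finally, the assumed lower bound on $\eta$ exceeds $4\|A\|_\infty$ (indeed $\tfrac{2}{\sqrt2-1}\,n\|A\|_F=(2\sqrt2+2)\,n\|A\|_F\ge 4\sqrt n\,\|A\|_F\ge 4\|A\|_\infty$ for $n\ge 1$), so \cref{thm:penalty} applies: the feasible point $X^*\in\mathcal F_1$ is a local minimizer of \cref{eq:l1/2-pro} and therefore a local minimizer of the original problem \cref{eq:ori-pro}.

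I expect the delicate step to be the limit passage in the KKT identification, because the subgradient of the $\ell_{1/2}$ term blows up as an entry tends to $0^+$. The remedy is that $\partial|x|^{1/2}=\mathbb R$ at $x=0$, so a subgradient sequence attached to entries shrinking to zero is absorbed by the limiting subdifferential, whereas for entries bounded away from zero the map $x\mapsto\tfrac\eta2 x^{-1/2}$ is continuous; together with the closedness of the graph of the limiting subdifferential and of the normal-cone maps $N_{\mathcal X}$ and $N_{\mathcal Y}$, this legitimizes the inclusion in the limit and completes Step two.
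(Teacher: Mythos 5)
Your proposal is correct and follows essentially the same route as the paper's proof: vanishing successive differences via \cref{pro: to 0} and \cref{pro: lambda delta}, identification of every cluster point as a KKT point of \cref{eq:l1/2-proxy} and hence of \cref{eq:l1/2-pro}, binarity via \cref{01kkt}, and the connectedness-of-cluster-set versus discreteness-of-$\mathcal{F}_1$ argument to collapse to a single limit. You are in fact somewhat more careful than the paper on two points—the closedness of the limiting subdifferential when passing to the limit in \cref{kkt_x}, and the explicit justification (via \cref{thm:penalty}) that feasibility for \cref{eq:ori-pro} yields local minimality—but these are refinements of the same argument, not a different one.
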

\begin{proof}
    Let $\{ X^*, Y^*, \Lambda^* \}$ be a cluster point of the bounded sequence $\{ X^k, Y^k, \Lambda^k \}$. Then there exists an index set $\mathbb{K} \subseteq \mathbb{N}$ such that:
\[
\lim_{k \in \mathbb{K}, k \to \infty} (X^k, Y^k, \Lambda^k) = (X^*, Y^*, \Lambda^*).
\]

By \cref{pro: to 0}, we have:
\[
\lim_{k \in \mathbb{K}, k \to \infty} (X^{k+1} - X^{k}) = 0 \quad \text{and} \quad \lim_{k \in \mathbb{K}, k \to \infty} (Y^{k+1} - Y^{k}) = 0.
\]

By \cref{pro: lambda delta}, we also have $\lim_{k \in \mathbb{K}, k \to \infty} (\Lambda^{k+1} - \Lambda^{k}) = 0$. Consequently, it follows that:
\[
\lim_{k \in \mathbb{K}, k \to \infty} (Y^{k} - X^{k}) = Y^* - X^* = 0.
\]

Since $Y^*$ satisfies $\mathbf{1}_n^\top Y^* = b~ \mathbf{1}_m^\top$ and $Y^* \mathbf{1}_m = \mathbf{1}_n$, and $Y^* = X^*$, it follows that $\mathbf{1}_n^\top X^* = b~ \mathbf{1}_m^\top$, $X^* \mathbf{1}_m = \mathbf{1}_n$, and $0 \leq X^* \leq 1$.
Therefore, $X^*$ is a feasible point for \cref{eq:l1/2-proxy}.

Since $X^*$ and $Y^*$ are KKT points for the $X$ and $Y$ subproblems, respectively, the following optimality conditions hold according to \cref{kkt_x} and \cref{kkt y}:
\begin{subequations}
\label{eq:kkt-conditions}
\begin{align}
    & 0 \in -\Lambda^*_{ij} + \frac{1}{2}A_i^\top X_j^* + \eta \partial |X_{ij}^{*}|^{1/2} + M_{ij}^*, \quad M_{ij}^* \in N_{\mathcal{X}}(X^{*}_{ij}),\quad \forall i,j,\label{eq:kkt-condition1} \\
    & 0 = \Lambda_{ij}^* + \frac{1}{2}A_i^\top X_j^* + G_{ij} + Z_{ij}^*, \quad Z_{ij}^* \in N_{\mathcal{Y}}(X^{*}_{ij}), \quad \forall i,j.\label{eq:kkt-condition2} 
\end{align}
\end{subequations}

Summing these two inclusions yields:
\[
0 \in A_i^\top X_j^* + \eta \partial |X_{ij}^{*}|^{1/2} + G_{ij} + M_{ij}^* + Z_{ij}^*, \quad M_{ij}^* \in N_{\mathcal{X}}(X^{*}_{ij}), \quad Z_{ij}^* \in N_{\mathcal{Y}}(X^{*}_{ij}),~ \forall i,j.
\]

This shows that the cluster point $X^*$ satisfies the KKT conditions for the penalty problem \cref{eq:l1/2-pro} as specified in \cref{kkt_of_pen_pro}, establishing that $X^*$ is a KKT point for \cref{eq:l1/2-pro}. By \cref{01kkt},  $X^*$ is also a feasible point and local minimizer of the problem \cref{eq:ori-pro}. 
Since the set of cluster points of a bounded sequence
is connected, and the feasible set of the binary problem \cref{eq:ori-pro} is discrete, thus the iterative
sequence $\{ X^k\}$ can have only one cluster point. This implies that the entire sequence
$\{ X^k\}$ converges to $X^*$. 
\end{proof}

\subsection{Finite-step termination property}
In this subsection, we demonstrate the finite-step termination property of \cref{alg:algorithm framework}. It highlights an appealing feature of \cref{alg:algorithm framework}.
\begin{theorem}
\label{thm:finite}
Assume that for the limit point \( X^* \) of \( \{X^k\} \), strict complementarity holds such that \( M_{ij}^* > 0 \) whenever \( X_{ij}^* = 1 \). With \cref{assu 1}, \cref{assump 2}, and \(  \eta > \frac{2}{\sqrt{2}-1} \sqrt{n}(\|G\|_F + \sqrt{n}\|A\|_F ), \beta > \frac{1}{4} \eta \) holds,  \cref{alg:algorithm framework} terminates in finite iterations.
\end{theorem}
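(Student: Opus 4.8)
The plan is to show that the iterates \emph{identify} the binary limit $X^*$ in finitely many steps and then freeze, so that the residuals $h^k$ and $p^k$ in \cref{eq:kkt_res} vanish exactly. By \cref{last conv theo}, the full sequence $\{X^k\}$ converges to a single limit $X^*$ that is binary and feasible for \cref{eq:ori-pro}, while $Y^k\to X^*$ and $\Lambda^k\to\Lambda^*$, with $(X^*,Y^*,\Lambda^*)$, $Y^*=X^*$, satisfying the KKT system \cref{eq:kkt-conditions}. The argument then splits into two stages: first, an entrywise analysis of the $X$-subproblem showing $X^k=X^*$ for all large $k$; second, a linear-algebra argument showing that once $X^k$ is frozen, the pair $(Y^k,\Lambda^k)$ stabilizes and the residuals become zero.

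For the first stage I would study the scalar problem \cref{eq: Update_Y_1}, namely $\phi_k(x)=\frac{\beta}{2}(x-r^k)^2+\eta x^{1/2}$ on $[0,1]$, whose global minimizer $X^k_{ij}$ is selected in \cref{eq: Update_Y_2} from $\{0,1\}$ together with the interior stationary candidates solving \cref{x*}. The crucial structural fact is that $\phi_k''(x)=\beta-\frac{\eta}{4}x^{-3/2}$ is nonnegative only for $x\ge x_c:=(\eta/4\beta)^{2/3}$, and the hypothesis $\beta>\frac14\eta$ guarantees $x_c\in(0,1)$; hence every interior local minimizer lies in $[x_c,1)$ and is bounded away from $0$ uniformly in $k$, so the returned value always lies in $\{0\}\cup[x_c,1)\cup\{1\}$. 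For an entry with $X^*_{ij}=0$, since $X^k_{ij}\to 0<x_c$, eventually $X^k_{ij}\notin[x_c,1]$, forcing $X^k_{ij}=0$ exactly. For an entry with $X^*_{ij}=1$, strict complementarity gives $\phi_*'(1^-)=\beta(1-r^*)+\frac{\eta}{2}=-M^*_{ij}<0$, obtained from $r^*_{ij}=X^*_{ij}+\frac1\beta(\Lambda^*_{ij}-\frac12 A_i^\top X^*_j)$ and \cref{eq:kkt-condition1}; this rules out interior stationary points approaching $1$, since their gradients would tend to $\phi_*'(1^-)\neq 0$, so for large $k$ the minimizer must equal $1$. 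As there are only $nm$ entries, there is a finite $K$ with $X^k=X^*$ for all $k\ge K$.

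For the second stage, fix $k\ge K$ so $X^k=X^*$, set $C:=\frac12 A X^*+G$, and let $P$ be the orthogonal projection onto the tangent subspace of the affine set $\mathcal{Y}$, so that $\Pi_{\mathcal{Y}}(W)=X^*+P(W-X^*)$ because $X^*\in\mathcal{Y}$. Then \cref{eq: up_y} and \cref{eqn:lambda} reduce to the affine recursion $\Lambda^k=(I-P)\Lambda^{k-1}-PC$ and $Y^k=X^*-\frac1\beta P(\Lambda^{k-1}+C)$. Decomposing $\Lambda$ along the tangent and normal subspaces shows that the normal component $(I-P)\Lambda^k$ is frozen, while the tangent component satisfies $P\Lambda^k=-PC$ after a single step; hence $\Lambda^k$ is constant for $k\ge K$. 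This in turn yields $Y^k=X^*=X^k$ for $k\ge K+1$ and $Y^k=Y^{k-1}$ for $k\ge K+2$. Consequently $p^k=\beta\|Y^k-X^k\|_F=0$ and $h^k=\|(\frac12 A-\beta I)(Y^k-Y^{k-1})\|_F=0$, so \cref{alg:algorithm framework} terminates in finitely many iterations.

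I expect the main obstacle to be the first stage, and within it the case $X^*_{ij}=1$: because the $\ell_{1/2}$ proximal map is nonconvex and its interior stationary branch can in principle accumulate at $1$, ruling this out genuinely requires the strict-complementarity hypothesis to force $\phi_*'(1^-)\neq 0$. The case $X^*_{ij}=0$ instead relies on the thresholding gap $x_c>0$ supplied by $\beta>\frac14\eta$. Care is also needed to confirm that \cref{eq: Update_Y_2} indeed returns the global minimizer, so that the candidate set is exactly $\{0\}\cup[x_c,1)\cup\{1\}$; granting this, as the algorithm's construction does, the remaining steps are routine.
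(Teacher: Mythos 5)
Your proposal is correct, and its core — finite identification of the binary pattern via an entrywise analysis of the $X$-prox subproblem, with a thresholding gap forcing the entries near $0$ to vanish exactly and strict complementarity forcing the entries near $1$ to hit $1$ exactly — is the same strategy as the paper's proof, though the mechanisms differ in instructive ways. For the zero entries the paper compares $\mathrm{obj}(X^k_{st})-\mathrm{obj}(0)$ directly using a uniform bound $N$ on $R^k$ and the choice $\zeta<(\eta/(\beta N))^2$, whereas you invoke the second-order condition $\phi_k''(x)=\beta-\tfrac{\eta}{4}x^{-3/2}\ge 0$ to confine interior minimizers to $[x_c,1)$ with $x_c=(\eta/(4\beta))^{2/3}$; this is cleaner and $N$-free, and it is airtight once one adds the (easy) observation that any critical point below $x_c$ is a local maximum on the increasing branch of $\phi_k$ and therefore can never be selected over $x=0$ in \cref{eq: Update_Y_2}. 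For the unit entries your limit of the stationarity residual, $0=\phi_k'(X^k_{ij})\to\beta(1-r^*)+\tfrac{\eta}{2}=-M^*_{ij}<0$, is the same computation the paper packages as $R^k_{st}<1+\tfrac{\eta}{2\beta}<R^*_{st}$ via the monotonicity of $l(x)=x+\tfrac{\eta}{2\beta}x^{-1/2}$; both versions implicitly rely on convergence of $R^k$ (hence of $\Lambda^k$), which \cref{last conv theo} does not literally assert, so you inherit the paper's minor gap rather than introduce a new one. Your second stage — the affine recursion $\Lambda^k=(I-P)\Lambda^{k-1}-PC$ showing that once $X^k$ freezes the pair $(Y^k,\Lambda^k)$ stabilizes in one step and the residuals $h^k,p^k$ become exactly zero — goes beyond the paper, which stops at identification of the binary iterate and declares termination; this addition is correct and makes precise in what sense the algorithm ``terminates.''
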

\begin{proof}
Recall that $R^{k} = Y^{k-1}+\frac{1}{\beta}\left(\Lambda^{k-1}-\frac{1}{2} A Y^{k-1}\right) $, by the boundness of the $\{X^k, Y^k, \Lambda^k\}$ sequence, there exists $N>0$, such that $|R_{ij}^{k}| <N$. By \cref{last conv theo}, we have: $\lim_{k \to \infty }X^k = X^* $.  Thus $\forall \zeta \in \left(0, \min\{ (\frac{\eta}{\beta N})^2) ,1-(\frac{\eta}{4\beta})^{2/3} \}\right)$, there exists $K_1>0$, such that $\forall k >K_1, |X_{ij}^k -X_{ij}^*| < \zeta$. Then $X_{ij}^k \in [0,\zeta)~ \text{or} ~ X_{ij}^k \in (1-\zeta,1], ~\forall k > K_1 $, depending on the corresponding $X^*_{ij}$ being 0 or 1. 

We first consider the entries of  
 $X^*$ where the value is 0. Suppose that there still exists $X_{st}^k \in (0, \zeta)$ in $X^k, ~k> K_1$. Consider the subproblem for $X$, define the objective function in \cref{eq: Update_Y_1} as $obj(\cdot)$, then:
\begin{equation*}
\begin{aligned}
    obj(X_{st}^k)  - obj(0) =& \frac{\beta}{2} (X_{st}^k)^2 - \beta R_{st}^k X_{st}^k + \eta (X_{st}^k)^{1/2} \\
    =& \beta X_{st}^k \left(\frac{1}{2} X_{st}^k +  \frac{\eta}{\beta}(X_{st}^k)^{-1/2} -  R_{st}^k\right) \\
    >& \beta X_{st}^k \left( -N +  \frac{\eta}{\beta} \zeta^{-1/2} \right) \\
    >& 0 \quad \text{for} \quad \zeta \in \left(0, \min \left\{ \left(\frac{\eta}{\beta N}\right)^2, 1-\left(\frac{\eta}{4\beta}\right)^{2/3} \right\} \right),
\end{aligned}
\end{equation*}
which makes a contradiction. Thus there does not contain any $X_{st}^k  \in (0,\zeta)$ in $X^k, ~ k>K_1$. 

Next, if there exists $X_{st}^k \in (1-\zeta,1), ~ k > K_1$, then by the first-order optimal condition \cref{x*}, we have: $R_{st}^k=X_{st}^k+\frac{\eta}{2\beta}(X_{st}^k)^{-1/2}$. Define the function $l(x)= x + \frac{\eta}{2\beta}x^{-1/2}$, then $l(x)$ is monotonically increasing in $(1-\zeta,1)$. Thus, we have: $R_{st}^k < 1+\frac{\eta}{2\beta}$.

For $R^*_{st} = X^*_{ij} + \frac{1}{\beta} (\Lambda_{st}^* - \frac{1}{2}A_i^{\top}X_j^*)$, by the optimality condition \cref{eq:kkt-condition1}, we have: $R_{st}^* \in X_{st}^* + \frac{\eta}{2\beta} (X_{st}^*)^{-1/2} + \frac{1}{\beta}M_{ij}^*=1+\frac{\eta}{2\beta} +\frac{1}{\beta}M_{ij}^* > 1 + \frac{\eta}{2\beta}$.

We now leverage the convergence of \(\{R_{st}^k\}\) to \(R_{st}^*\). For any \(\delta > 0\), there exists \(K_2 \in \mathbb{N}\) such that for all \(k > K_2\):
$|R_{st}^k - R_{st}^*| < \delta$.

Choosing \(\delta = \frac{1}{2}\left(R_{st}^* - \left(1 + \frac{\eta}{2\beta}\right)\right)\) gives:
\[
R_{st}^k > R_{st}^* - \frac{1}{2}\left(R_{st}^* - 1 - \frac{\eta}{2\beta}\right) = \frac{R_{st}^*}{2} + \frac{1}{2} + \frac{\eta}{4\beta} > \frac{1}{2}\left(1 + \frac{\eta}{2\beta}\right) + \frac{1}{2} + \frac{\eta}{4\beta} = 1 + \frac{\eta}{2\beta}.
\]

Thus, for all \(k > K_2\), \(R_{st}^k > 1 + \frac{\eta}{2\beta}\). Let \(K = \max\{K_1, K_2\}\). For \(k > K\), we derive a contradiction. This contradiction implies no such \(k > K\) can exist, meaning \(X_{st}^k \notin (1 - \zeta, 1)\) for all sufficiently large \(k\). By the algorithm's update rule, this forces \(X_{st}^k = 1\) within finite steps, establishing finite termination.
\end{proof}

In the above theorem, a strict complementarity  assumption is imposed. The strict complementarity condition strengthens the standard KKT complementarity by excluding degenerate cases in which both a constraint and its multiplier vanish simultaneously. 
In semidefinite and quadratic programming, it is often assumed as a key regularity condition ensuring well-behaved optimality, active-set identification, and convergence of interior-point, augmented Lagrangian, and ADMM-type methods \cite{basu2024complexityadpsc3,chua2008analyticitysdpsc4,de2019strictsdpsc5,SDPsc1,SDPSC2,sc,Boleysc,han2013localsc,raghunathan2014admmsc}

\section{Experimental results}
\label{sec:experiments}
This section presents experimental validations of the proposed algorithm for solving problem \cref{eq:ori-pro}, focusing on two key aspects: its application in MMD-based mini-batch selection (a practical scenario for deep learning training) and analyses of experimental test cases. 

\subsection{Problem Setup}
MMD is a popular statistical metric for quantifying the discrepancy between the marginal distributions of two datasets. It operates by mapping data into a high-dimensional feature space using kernel functions, then comparing the mean embeddings of the distributions. This kernel-based approach enables MMD to capture complex nonlinear distributional differences. This metric ranges from zero to positive values, where smaller MMD values indicate greater distributional similarity.
Consider two sets of samples $\mathbb{A} = \{a_1, a_2, \dots, a_{n_1}\}$ and $\mathbb{C} = \{c_1, c_2, \dots, c_{n_2}  \}$ drawn from different distributions. The empirical MMD between their distributions is given by~\cite{gretton2006kernel}:
\begin{equation*}
\left\|\frac{1}{n_1}\sum_{i=1}^{n_1} \phi(a_i) - \frac{1}{n_2} \sum_{i=1}^{n_2} \phi(c_i) \right\|_{\mathcal{H}}^2,
\end{equation*}
where $\phi: \mathcal{X} \to \mathcal{H}$ maps samples from input space $\mathcal{X}$ to a universal Reproducing Kernel Hilbert Space (RKHS) $\mathcal{H}$.

The training of deep learning models relies on the use of mini-batches. Typically, a training set $\mathbb{D}$ comprising $n$ samples is randomly partitioned into $m = \left\lceil n/b\right\rceil$ mini-batches, denoted as ${\mathbb{B}_1, \mathbb{B}_2, \dots, \mathbb{B}_m}$ before each epoch, where $b$ is a predefined hyperparameter known as batch size. These mini-batches are then sequentially fed into the model for training.
While the mini-batch gradient serves as an unbiased estimator of the true gradient over the entire training set, it can suffer from high variance due to the inherent variability introduced by random sampling~\cite{2010self-paced-learning,2015-pmlrzhaoa15-IS-pSMD-p-SDCA}. Increasing the batch size can reduce this variance but may suffer slower convergence speed~\cite{sample-size-matters-2012}. Carefully designed mini-batch selection methods can enhance the quality of gradient estimators without enlarging the batch size, thus balancing this trade-off.

Banerjee et al.~\cite{2021-AAAI-MMD} posited that the distribution of the selected samples should closely align with that of the unselected samples to ensure an accurate estimation of the entire training set. For the detail formulation, we refer readers to see \cite{2021-AAAI-MMD}. In contrast, we minimize the distributional discrepancy between all batches $\{\mathbb{B}_i\}_{i=1}^m$ and $\mathbb{D}$: 
\begin{equation}
\label{eq:frame1} \underset{\mathbb{B}_i,i=1,\dots,m}{\min} \quad  \frac{1}{m}\sum_{i=1}^{m}\left\| \frac{1}{b} \sum_{x_j \in \mathbb{B}_i}  \phi(x_j) - \frac{1}{n} \sum_{x_j \in \mathbb{D}}  \phi(x_j)\right\|^2_{\mathcal{H}},
\end{equation}
where $b$ represents the batch size, and $m$ denotes the number of batches. We define a binary assignment matrix $M \in \{0,1\}^{n \times m}$ with:
\[
M_{ij} = \begin{cases} 
1 & \text{if sample $i$ belongs to batch $\mathbb{B}_j$} \\ 
0 & \text{otherwise}
\end{cases}
\]
subject to $\sum_{j=1}^m M_{ij} = 1 ,~ \forall i=1,2,\dots,n$ and $\sum_{i=1}^nM_{ij} = b ,~ \forall j=1,2,\dots,m$ (each sample in exactly one batch, and each batch has $b$ samples).
Each row of $M$ corresponds to a unique sample in $\mathbb{D}$, preserving the original data ordering. This matrix representation transforms problem \cref{eq:frame1} into:
\begin{equation*}
\begin{aligned}
    \min_{M \in \{0,1\}^{n \times m}} \quad & \frac{1}{m} \left\| \frac{1}{b} M^\top \Phi - \frac{1}{n} \mathbf{1}_{n \times m}^\top \Phi \right\|_{\mathcal{H}}^2 \\
    \text{s.t.} \quad 
    & \mathbf{1}_n^\top M = b~ \mathbf{1}_m^\top, \\
    & M \mathbf{1}_m = \mathbf{1}_n,
\end{aligned}
\end{equation*}
where $\Phi = [\phi(x_1), \phi(x_2), \dots, \phi(x_n)]^\top$, and $\mathbf{1}_{n \times m}$ denotes the $n \times m$ all-ones matrix. Applying the kernel trick, we expand the objective to:
\begin{equation}
\label{eq: trace}
\begin{aligned}
    \min_{M \in \{0,1\}^{n \times m}} \quad & \frac{1}{m} \operatorname{tr} \left( \frac{1}{b^2} M^\top \Psi M - \frac{2}{n b} M^\top \Psi \mathbf{1}_{n \times m} + \frac{1}{n^2} \mathbf{1}_{n \times m}^\top \Psi \mathbf{1}_{n \times m} \right) \\
    \text{s.t.} \quad 
    & \mathbf{1}_n^\top M = b ~\mathbf{1}_m^\top, \\
    & M \mathbf{1}_m = \mathbf{1}_n, \\
\end{aligned}
\end{equation}
where $\Psi = \Phi \Phi^\top$ is the kernel matrix, which encodes pairwise sample similarities. As suggested in ~\cite{zhu2019aligning,zhu2019multi,zhu2020deep}, we adopt the Gaussian kernel. Leveraging $\Psi$'s symmetry, problem \cref{eq: trace} is a special case for problem \cref{eq:ori-pro} with $A = \frac{1}{b^2} \Psi$ and $G = -\frac{2}{nb} \Psi \mathbf{1}_{n \times m}$. 

We conduct comprehensive experiments evaluating the performance of our proposed algorithm for solving problem \cref{eq: trace}. Further, we compare the resulting mini-batch selection results against two baselines:
\begin{itemize}
    \item \textbf{Random}: Batches are sampled uniformly at random without replacement from the training set.
    \item \textbf{Vector}: The method is proposed in \cite{2021-AAAI-MMD}. Noting that their LP relaxation may not yield binary solutions, we implement a special case of our framework where $X$ reduces to a single column, which is structurally equivalent to their formulation.
    \item \textbf{Matrix}: Our proposed framework is applied on problem \cref{eq: trace}.
\end{itemize}

We evaluate our approach with several popular optimization methods: SGD, weighted SGD (WSGD), weighted SGD with momentum (WMSGD), and Adam on three datasets: one synthetic dataset, MNIST and CIFAR-10 Dataset, as detailed in next two subsections.
All results are averaged over three independent runs.

\subsection{Performance on Synthetic Dataset}
We construct a benchmark problem involving the sum of $K$ groups of quadratic functions. Each group $i$ contains $N_i$ quadratic subproblems, with total dimension $n = \sum_{i=1}^K N_i$:
\begin{equation*}
\min_{x} \sum_{i=1}^K \sum_{j=1}^{N_i} \left( \frac{1}{2} x^\top Q_{ij} x + g_{ij}^\top x \right),
\end{equation*}
where each $Q_{ij}$ is a positive definite matrix. The Gaussian kernel matrix is computed using the vectors $-Q_{ij}^{-1}g_{ij}$. All experiments were conducted for 400 epochs with a mini-batch size of 4. The theoretical optimum is given by:
\begin{equation*}
\begin{aligned}
    x^* &= -\left(\sum_{i=1}^K \sum_{j=1}^{N_i} Q_{ij}\right)^{-1} \left(\sum_{i=1}^K \sum_{j=1}^{N_i} g_{ij}\right), \\
    f^* &= \frac{1}{2} (x^*)^\top \left(\sum_{i=1}^K \sum_{j=1}^{N_i} Q_{ij}\right) x^* + \left(\sum_{i=1}^K \sum_{j=1}^{N_i} g_{ij}\right)^\top x^*.
\end{aligned}
\end{equation*}
\begin{table}[h!]
\caption{Performance comparison of error of $x$ and $f$ for different methods over various epochs and dataset sizes.}
\label{sy overall}
\begin{tabular}{c|c|cccccc}
\hline
\multirow{2}{*}{n}    & \multirow{2}{*}{Epoch} & \multicolumn{3}{c}{$\log_{10}(\text{Error of } x)$} & \multicolumn{3}{c}{$\log_{10}(\text{Error of } f)$} \\ \cline{3-8} 
                      &                        & \textbf{Random} & \textbf{Vector} & \textbf{Matrix} & \textbf{Random} & \textbf{Vector} & \textbf{Matrix} \\ \hline
\multirow{4}{*}{80}   & 100                    & -5.13           & -6.59           & \textbf{-7.10}  & -7.38           & -10.82          & \textbf{-11.99} \\
                      & 200                    & -5.81           & -7.00           & \textbf{-7.90}  & -8.41           & -10.90          & \textbf{-12.67} \\
                      & 300                    & -5.84           & -7.56           & \textbf{-7.95}  & -8.45           & -11.89          & \textbf{-12.73} \\
                      & 400                    & -5.45           & -7.34           & \textbf{-7.84}  & -7.51           & -11.67          & \textbf{-12.66} \\ \hline
\multirow{4}{*}{400}  & 100                    & -5.80           & -7.18           & \textbf{-7.90}  & -6.62           & -9.78           & \textbf{-11.15} \\
                      & 200                    & -5.90           & -7.71           & \textbf{-8.05}  & -6.85           & -10.71          & \textbf{-11.32} \\
                      & 300                    & -6.12           & -7.80           & \textbf{-8.38}  & -7.31           & -10.80          & \textbf{-11.93} \\
                      & 400                    & -6.36           & -7.79           & \textbf{-8.17}  & -7.74           & -10.78          & \textbf{-11.49} \\ \hline
\multirow{4}{*}{1200} & 100                    & -6.73           & -8.70           & \textbf{-8.80}  & -7.42           & -11.42          & \textbf{-11.78} \\
                      & 200                    & -6.70           & -8.49           & \textbf{-8.68}  & -7.49           & -11.12          & \textbf{-11.57} \\
                      & 300                    & -6.94           & \textbf{-8.98}  & -8.96           & -7.92           & \textbf{-12.21} & -11.98          \\
                      & 400                    & -7.44           & -8.12           & \textbf{-8.54}  & -7.44           & -10.39          & \textbf{-11.16} \\ \hline
\end{tabular}
\end{table}

\begin{figure}[h!]
\centering
\subfloat[\centering $\beta =20, ~\eta = 0.01$]{\label{fig:no01}\includegraphics[width=0.5\textwidth]{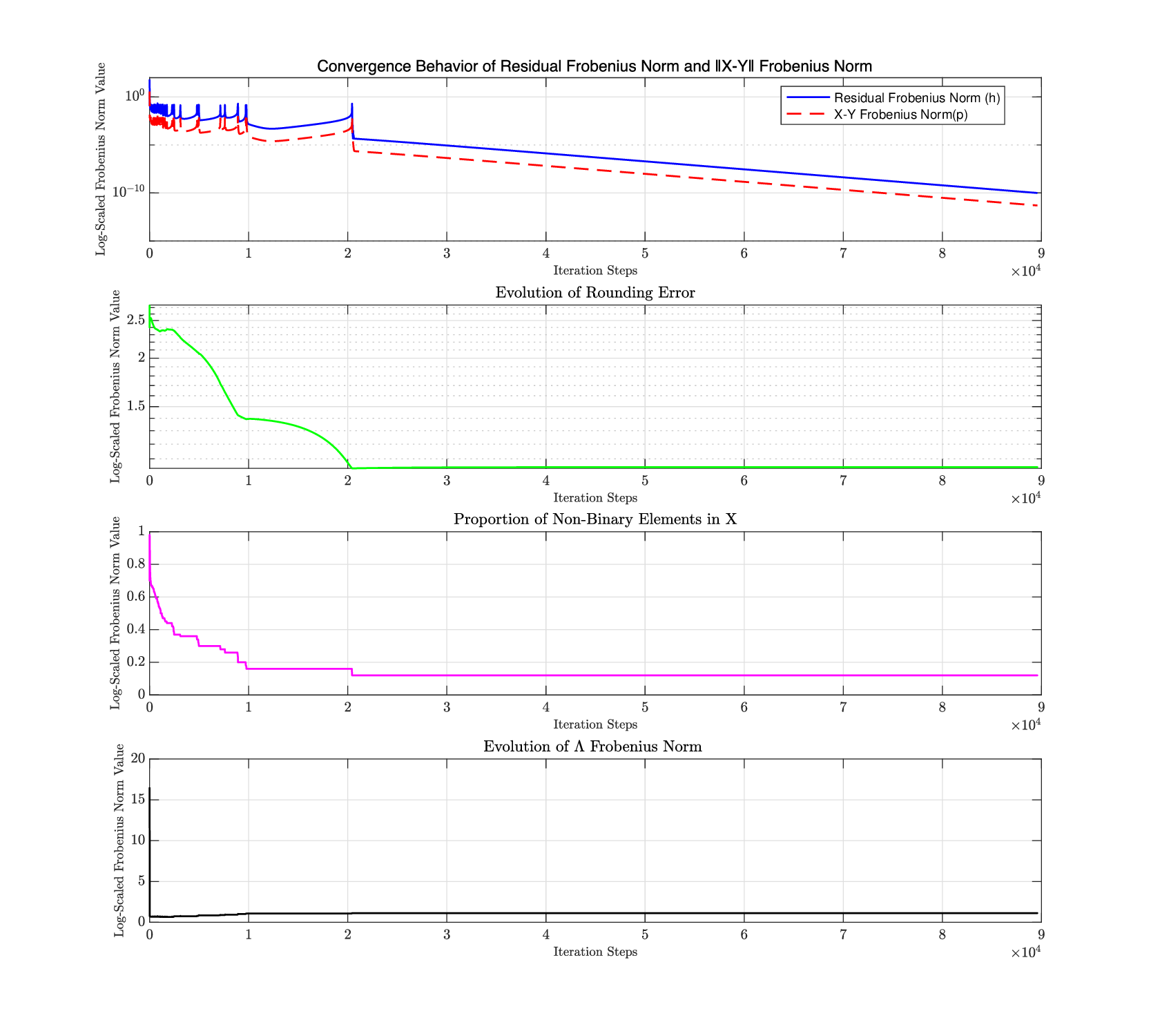}}
\subfloat[\centering $\beta =20, ~\eta = 1$]{\label{fig:to01}\includegraphics[width=0.5\textwidth]{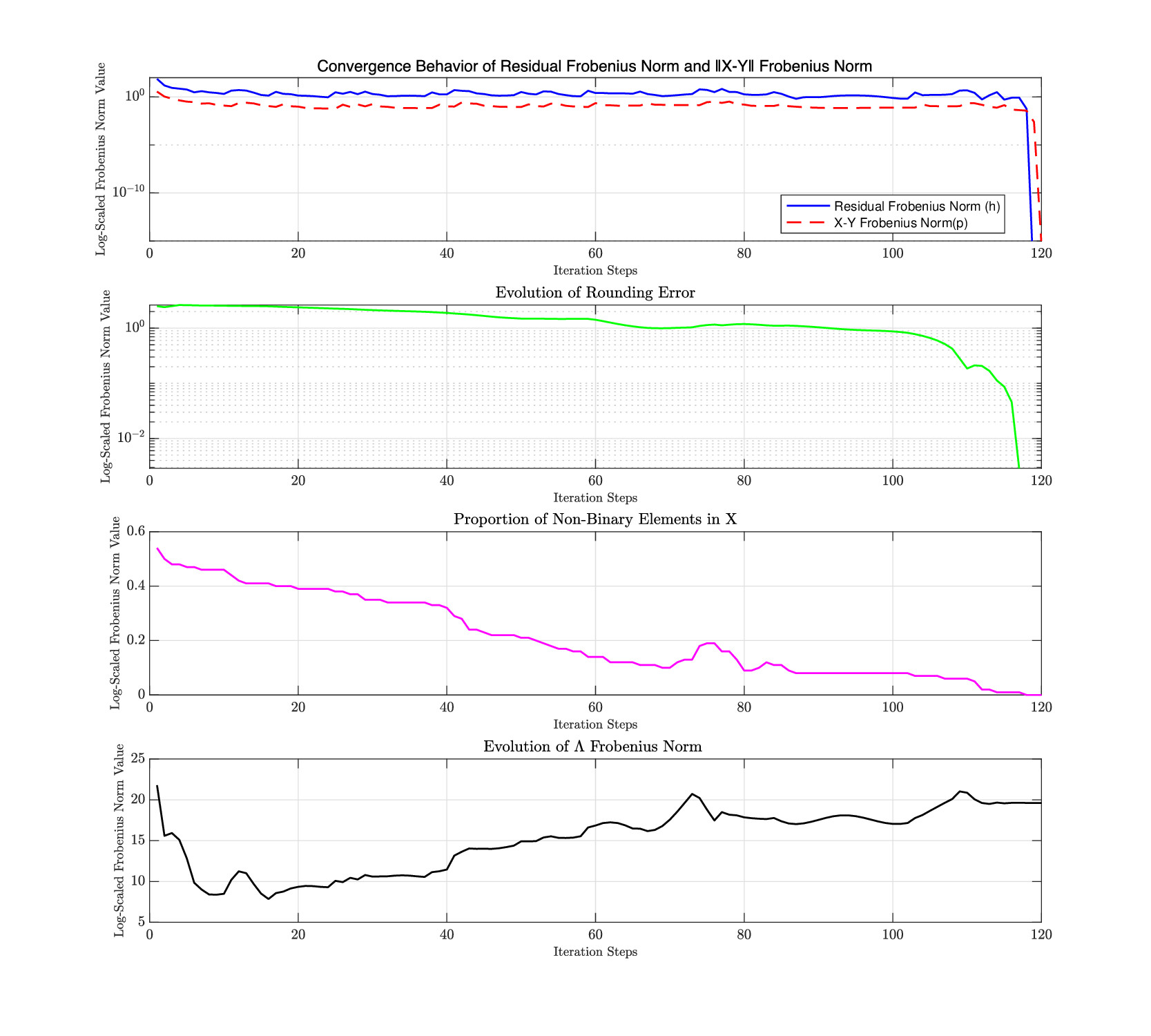}}

\subfloat[\centering $\beta =2000, ~\eta = 1$]{\label{fig:beta2000to01}\includegraphics[width=0.5\textwidth]{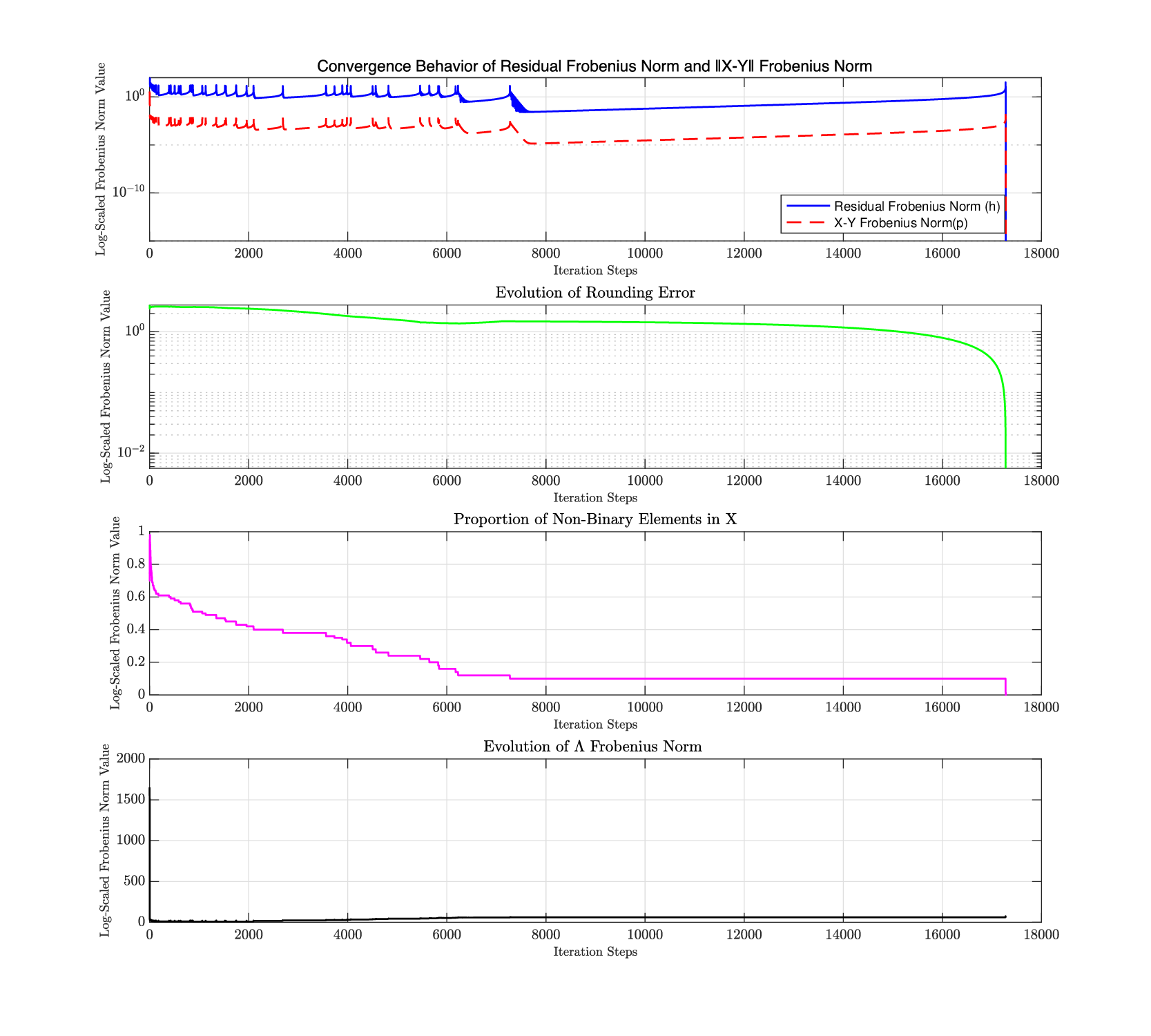}}
\subfloat[\centering $\beta =200000,~ \eta = 1$]{\label{fig:beta200000to01}\includegraphics[width=0.5\textwidth]{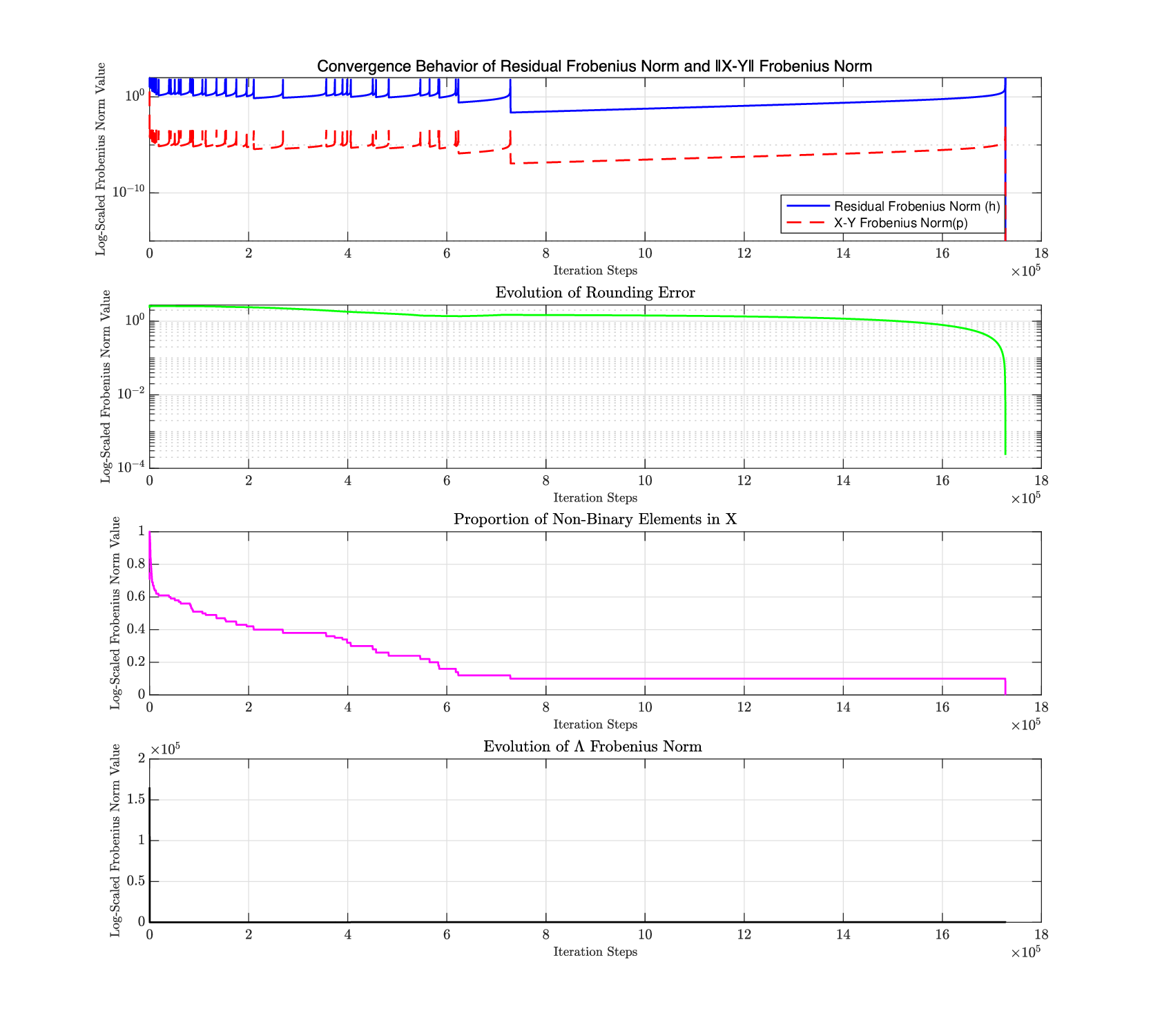}}
\caption{Convergence behavior under different $\beta,~\eta$ values.}
\label{fig:no01-to01}
\end{figure}

Firstly, we test the finite-step property of our algorithm and the boundedness of dual variable sequence $\{\Lambda^k \}$ in \cref{fig:no01-to01}. It demonstrates that \cref{alg:algorithm framework} achieves finite-step termination when $\eta$ is sufficiently large, as evidenced by \cref{fig:no01,fig:to01}. The first panel in each subfigure displays the termination conditions $h^k$ and $p^k$ defined in \cref{eq:kkt_res}. The second panel shows $\|X^k - \text{round}(X^k)\|_F$, where $\text{round}(X^k)$ denotes the rounded (binary) version of $X^k$. The third panel plots the fraction of non-binary elements in $X^k$, calculated as $\frac{1}{nm}\sum_{i=1}^n\sum_{j=1}^m \mathbb{I}(0 < X_{ij}^k < 1)$. The last panel plots the $\|\Lambda\|_F$ during iterate process. These results demonstrate that when $\eta=0.01$, the sequence $X^k$ converges to a non-integer solution $X^*$, whereas when $\eta=1$, it converges to a binary solution $X^*$ in finite steps. Increasing the value of $\beta$ reduces the convergence speed of \cref{alg:algorithm framework}, but the algorithm still terminates in a finite number of steps, as shown in \cref{fig:beta2000to01,fig:beta200000to01}. For all cases, $\|\Lambda\|_F$ is bounded,  which sustains the validity of \cref{assu 1}.

\cref{fig:synthesis re n400 f} presents the training results on the synthetic dataset. With access to the theoretical optimum \(f^*\), we evaluate convergence by computing \(\|f^k - f^*\|\) at each training step \(k\), where \(f^k\) is the objective value at iteration \(k\). Our \textbf{Matrix} method consistently achieves the lowest \(\|f^k - f^*\|\) values across SGD, WSGD, and WMSGD optimizers, with the \textbf{Vector} method performing between \textbf{Matrix} and \textbf{Random} across all iterations. Under Adam, \textbf{Matrix} and \textbf{Vector} achieve similar final error values, but \textbf{Matrix} demonstrates more stable convergence behavior throughout training compared to \textbf{Vector}. These results demonstrate that batch selection improves optimization, with our matrix-based method consistently outperforming vector-based batch selection. Comprehensive quantitative comparisons are summarized in \cref{sy overall}. We evaluate the method on three datasets with $n=80,400,1200$ respectively. The base-10 logarithm of the errors in $x$ and $f$ is computed every 100 epochs. Our \textbf{Matrix} method achieve lowest loss for almost all cases.

\begin{figure}[h!]
\centering
\subfloat[\centering $(f-\overset{*}{f})^2$ using SGD]{\label{fig:sgd}\includegraphics[width=0.40\textwidth]{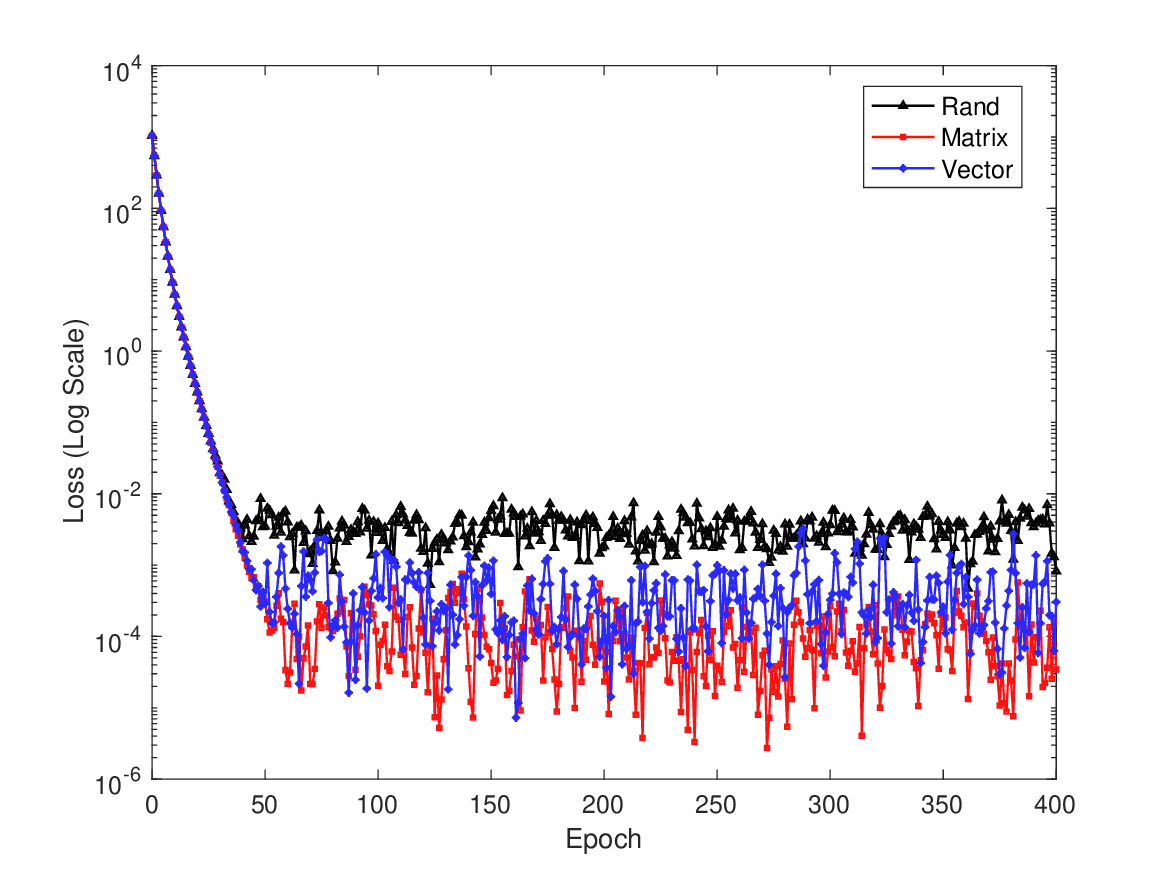}}
\subfloat[\centering $(f-\overset{*}{f})^2$ using WSGD]{\label{fig:wsgd}\includegraphics[width=0.40\textwidth]{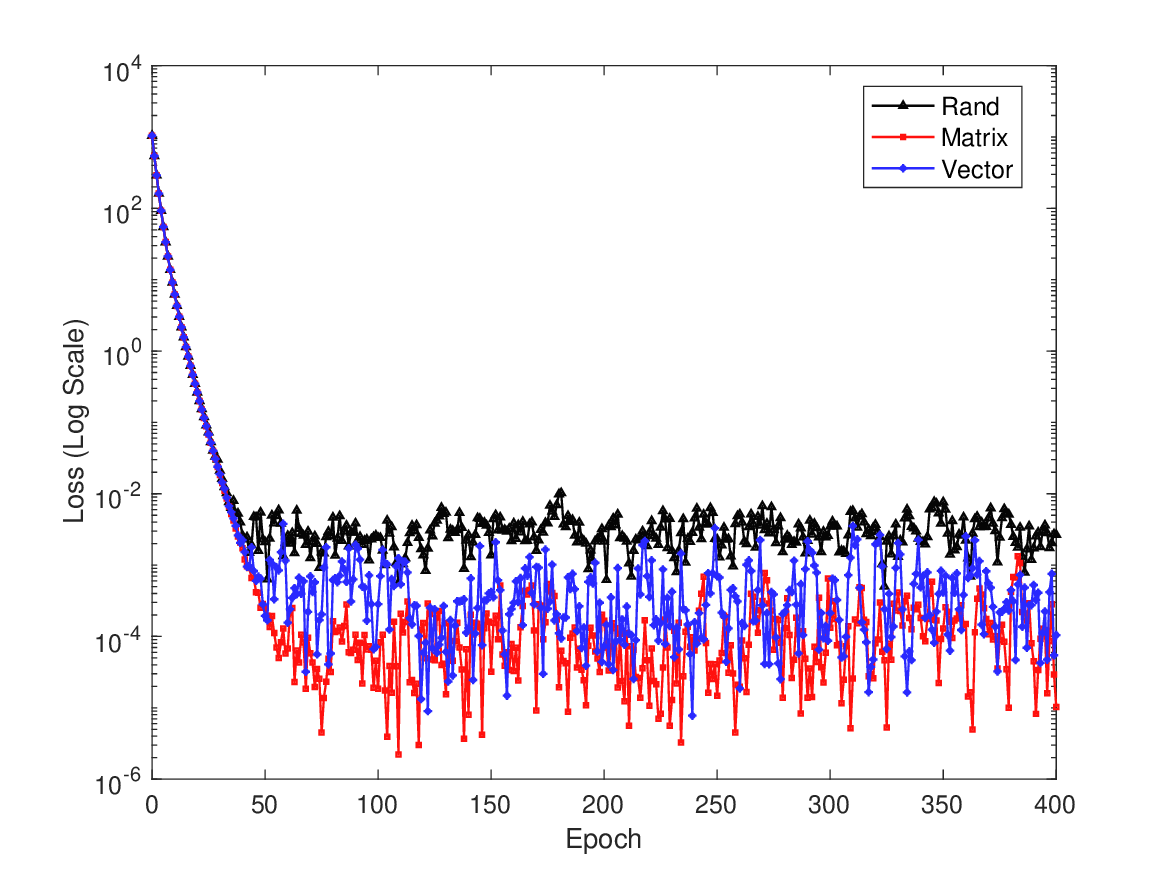}}

\subfloat[\centering $(f-\overset{*}{f})^2$ using WMSGD]{\label{fig:wmsgd}\includegraphics[width=0.40\textwidth]{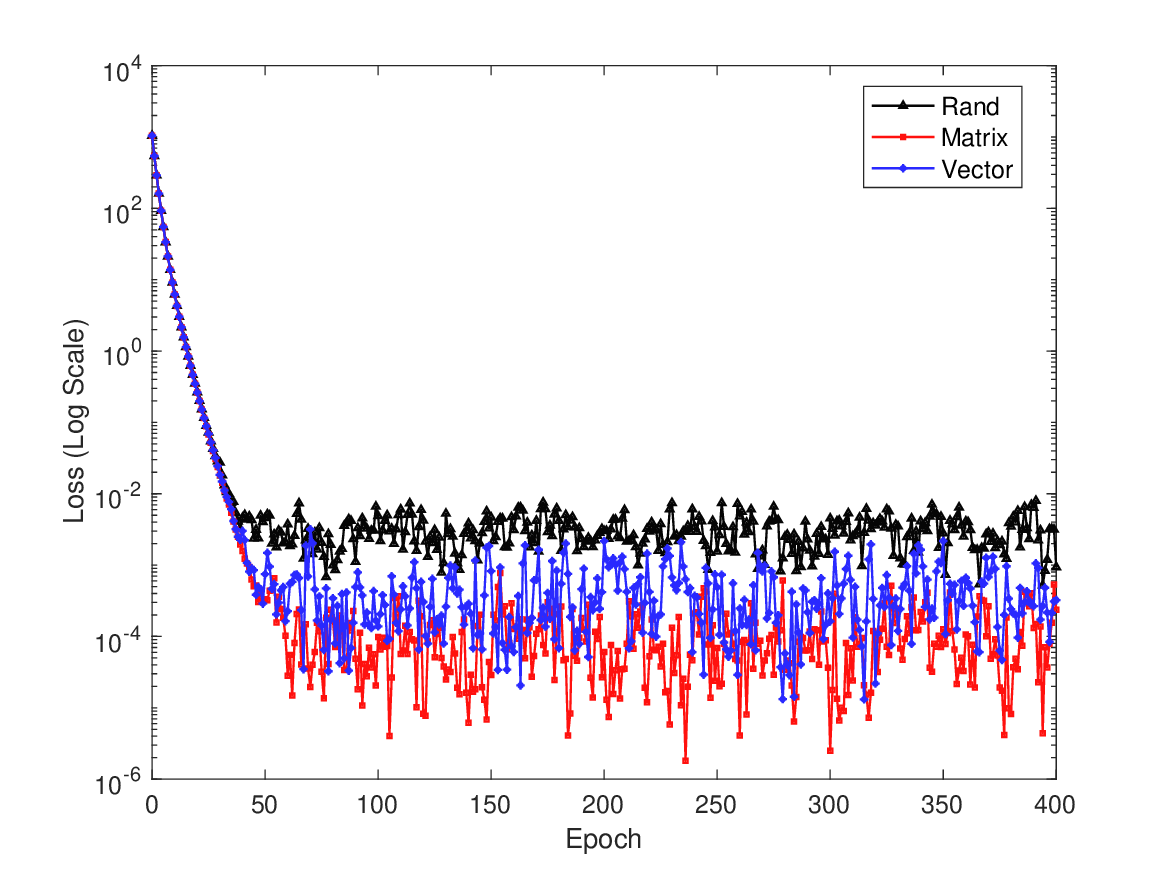}}
\subfloat[\centering $(f-\overset{*}{f})^2$ using ADAM]{\label{fig:adam}\includegraphics[width=0.40\textwidth]{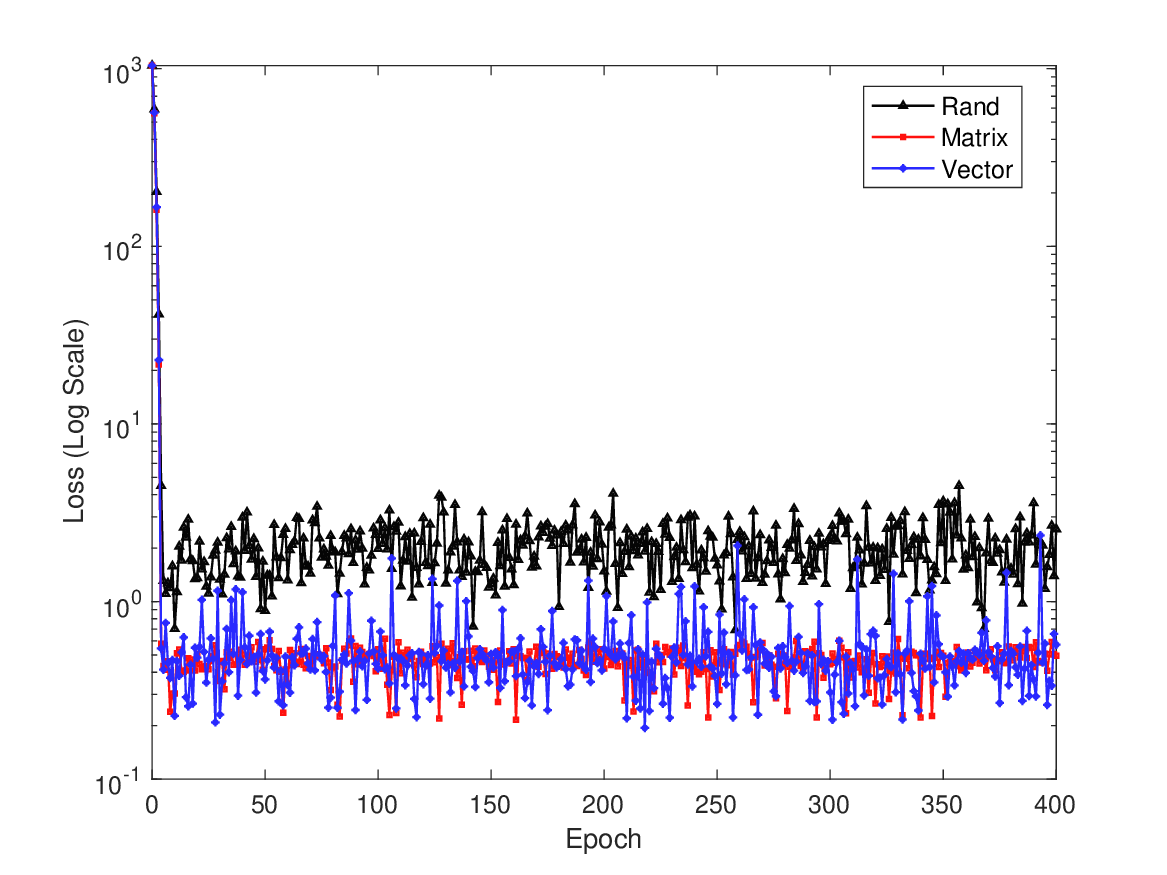}}

\subfloat[\centering $\|x-\overset{*}{x}\|^2$ using SGD]{\label{fig:sgd2}\includegraphics[width=0.40\textwidth]{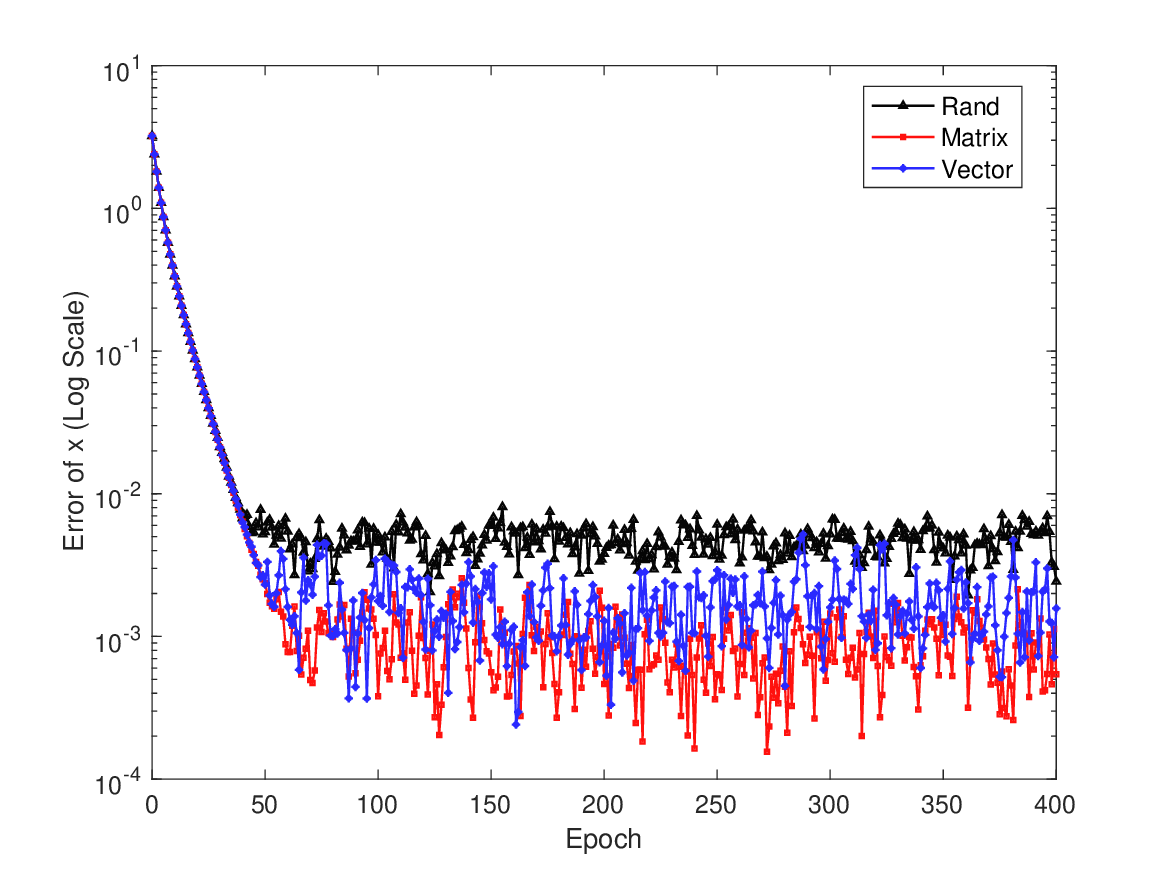}}
\subfloat[\centering $\|x-\overset{*}{x}\|^2$ using WSGD]{\label{fig:wsgd2}\includegraphics[width=0.40\textwidth]{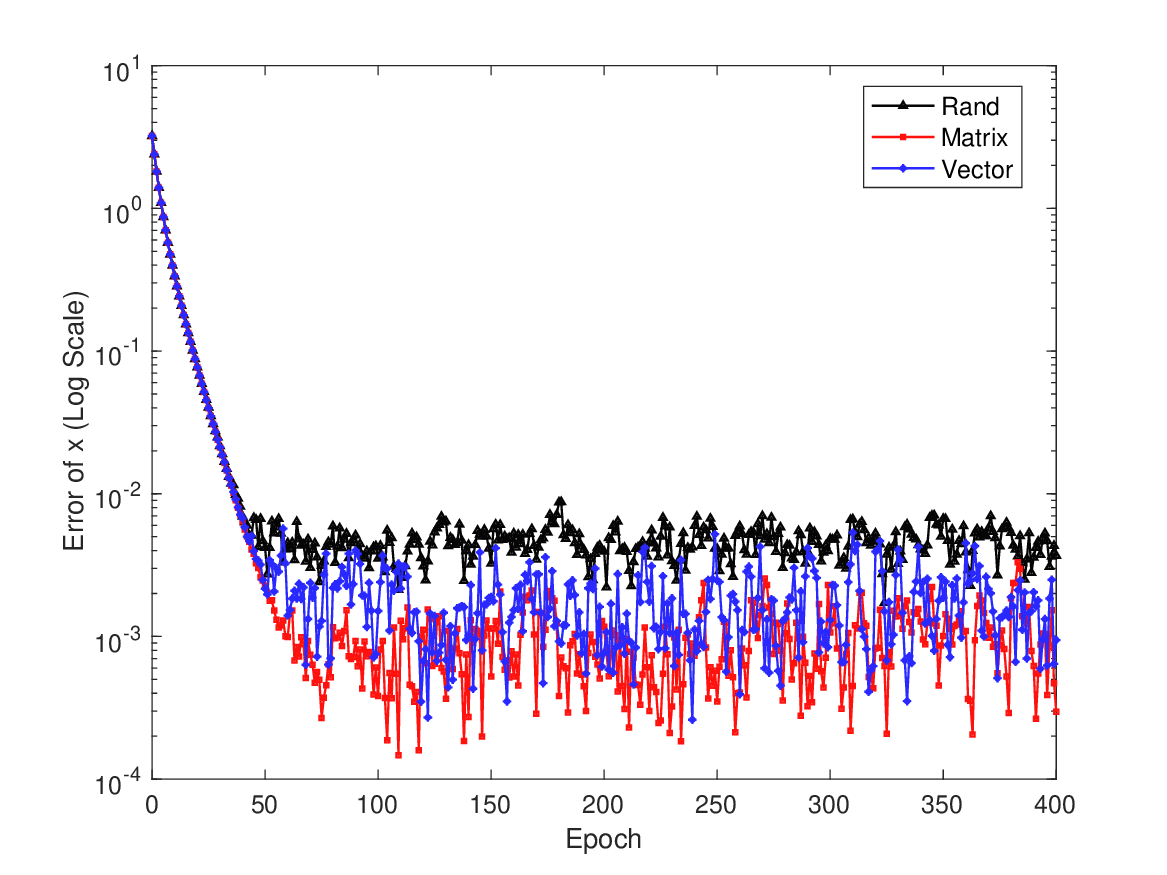}}

\subfloat[\centering $\|x-\overset{*}{x}\|^2$ using WMSGD]{\label{fig:wmsgd2}\includegraphics[width=0.40\textwidth]{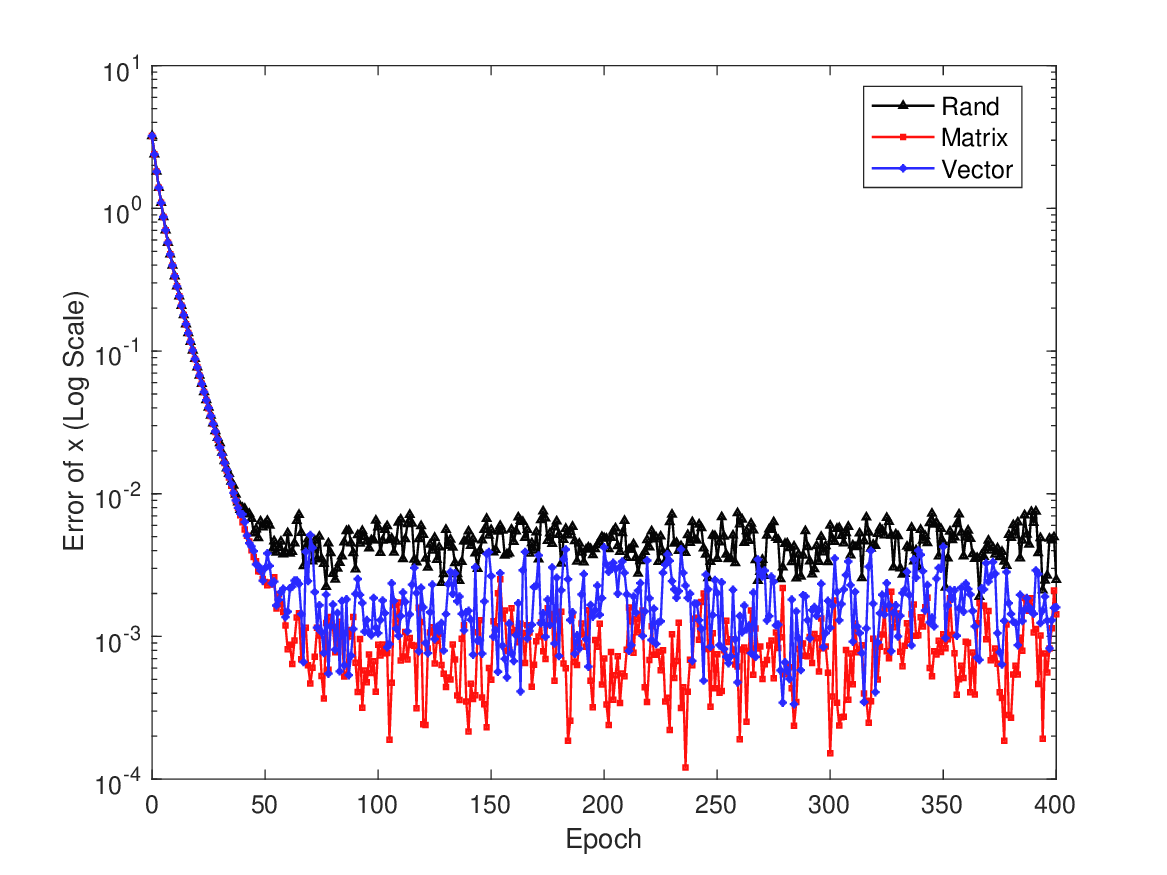}}
\subfloat[\centering $\|x-\overset{*}{x}\|^2$ using ADAM]{\label{fig:adam2}\includegraphics[width=0.40\textwidth]{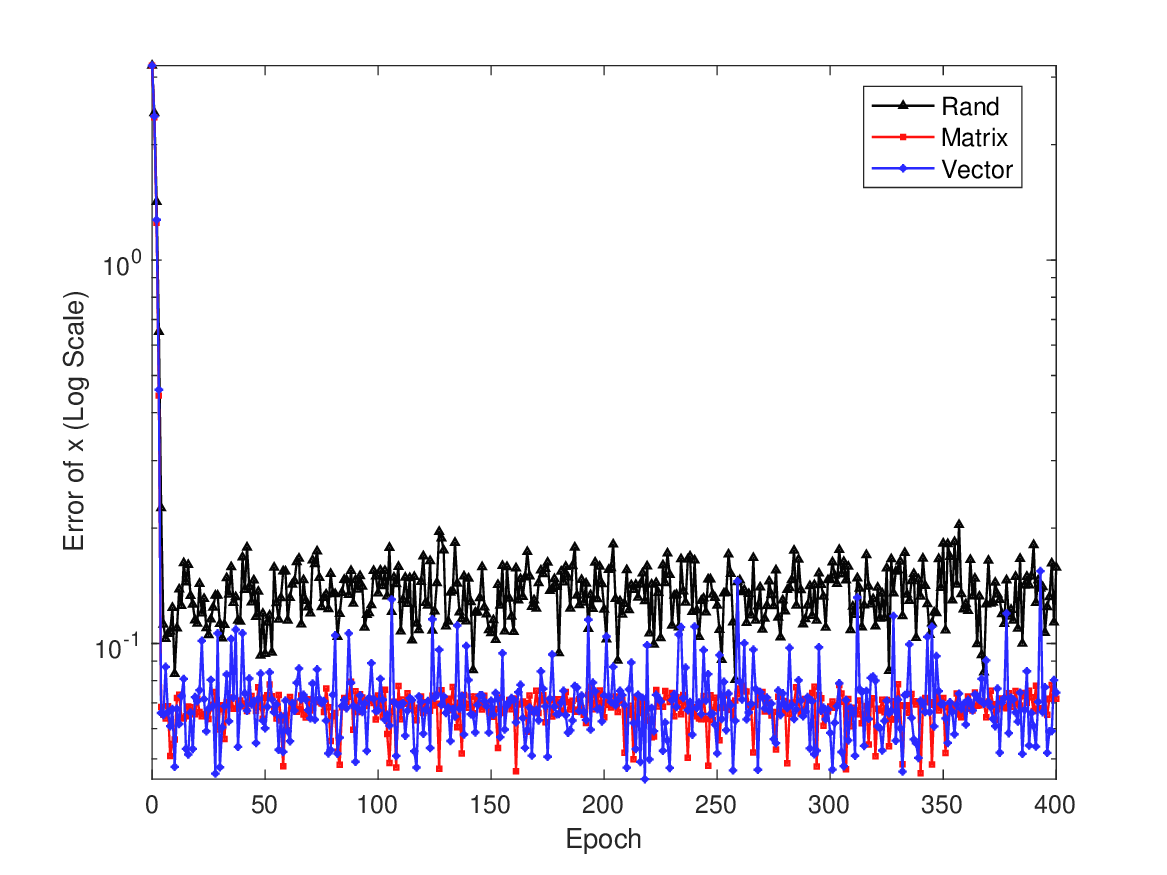}}
\caption{Convergence speed on synthetic dataset using different random optimization methods, n=400 case }
\label{fig:synthesis re n400 f}
\end{figure}
\subsection{Results on MNIST and CIFAR-10 Dataset}
For MNIST and CIFAR-10, we use the standard train/test splits provided with these datasets. We employ ResNet-18 as the backbone architecture for all experiments. All models are trained for 100 epochs, with mini-batch sizes of 256 for MNIST and 250 for CIFAR-10.

For MNIST, we vectorize images into 1D arrays and compute the kernel matrix directly from raw pixel values. For CIFAR-10, we first extract Histogram of Oriented Gradients (HOG) features \cite{albiol2008face} and then compute the kernel matrix from these representations. This approach is motivated by the fact that raw pixel values in CIFAR-10's natural color images are less effective for capturing semantic structures, whereas HOG features provide more discriminative edge and shape information.

\cref{fig:MNIST re} and \cref{fig:CIFAR-10 re} present experimental results on MNIST and CIFAR-10 datasets, respectively, comparing training loss and test error throughout optimization. Our \textbf{Matrix} approach consistently achieves the lowest training loss across all four optimizers, demonstrating accelerated convergence and strong compatibility with diverse optimization methods. In contrast, the \textbf{Vector} method exhibits inferior convergence with Adam, as evidenced by significantly higher loss and test error values.

\begin{figure}[h!]
\centering
\subfloat[\centering SGD Train Loss]{\label{fig:SGD Train Loss}\includegraphics[width=0.45\textwidth,,height=3.2cm]{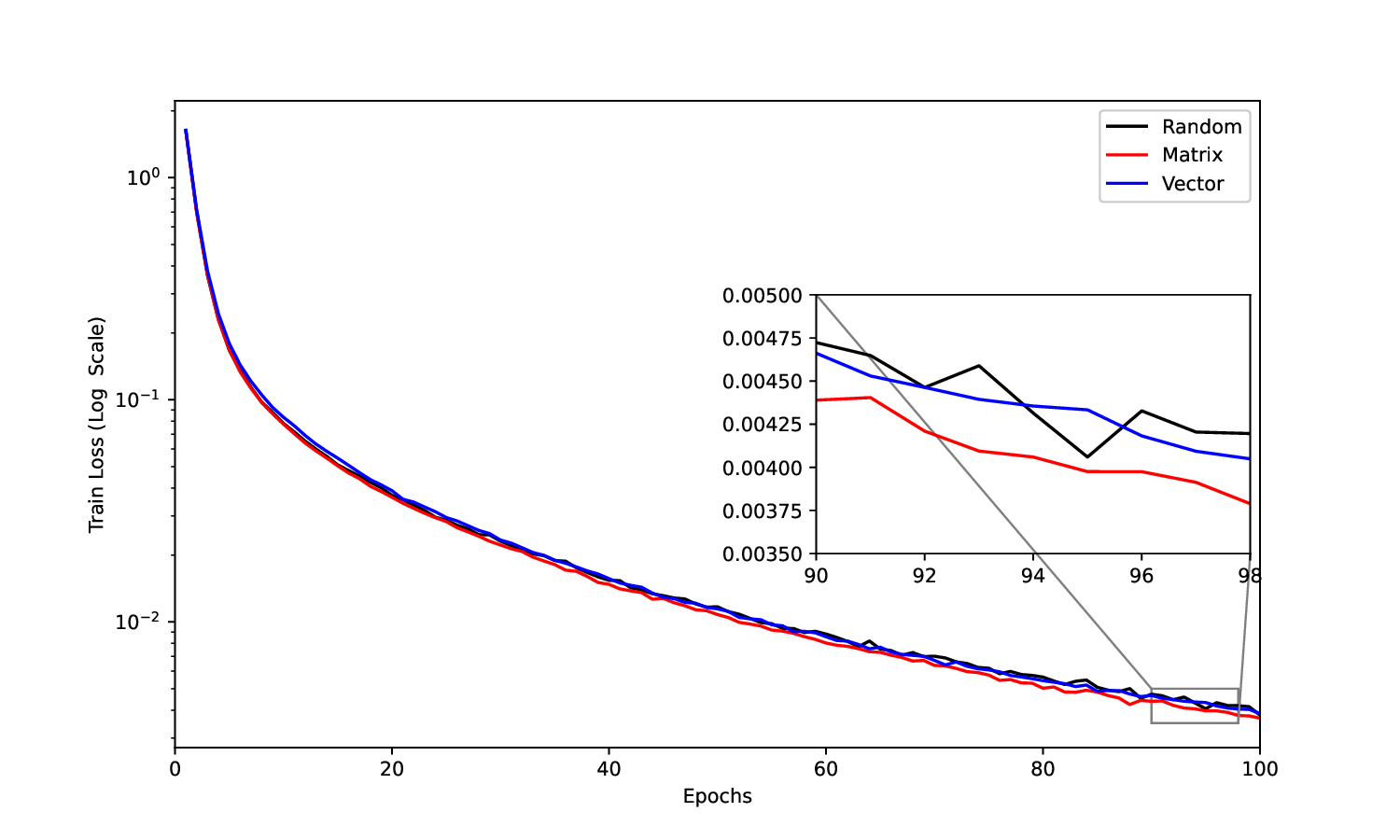}}
\subfloat[\centering SGD Test Error]{\label{fig:SGD Test Error}\includegraphics[width=0.45\textwidth,,height=3.2cm]{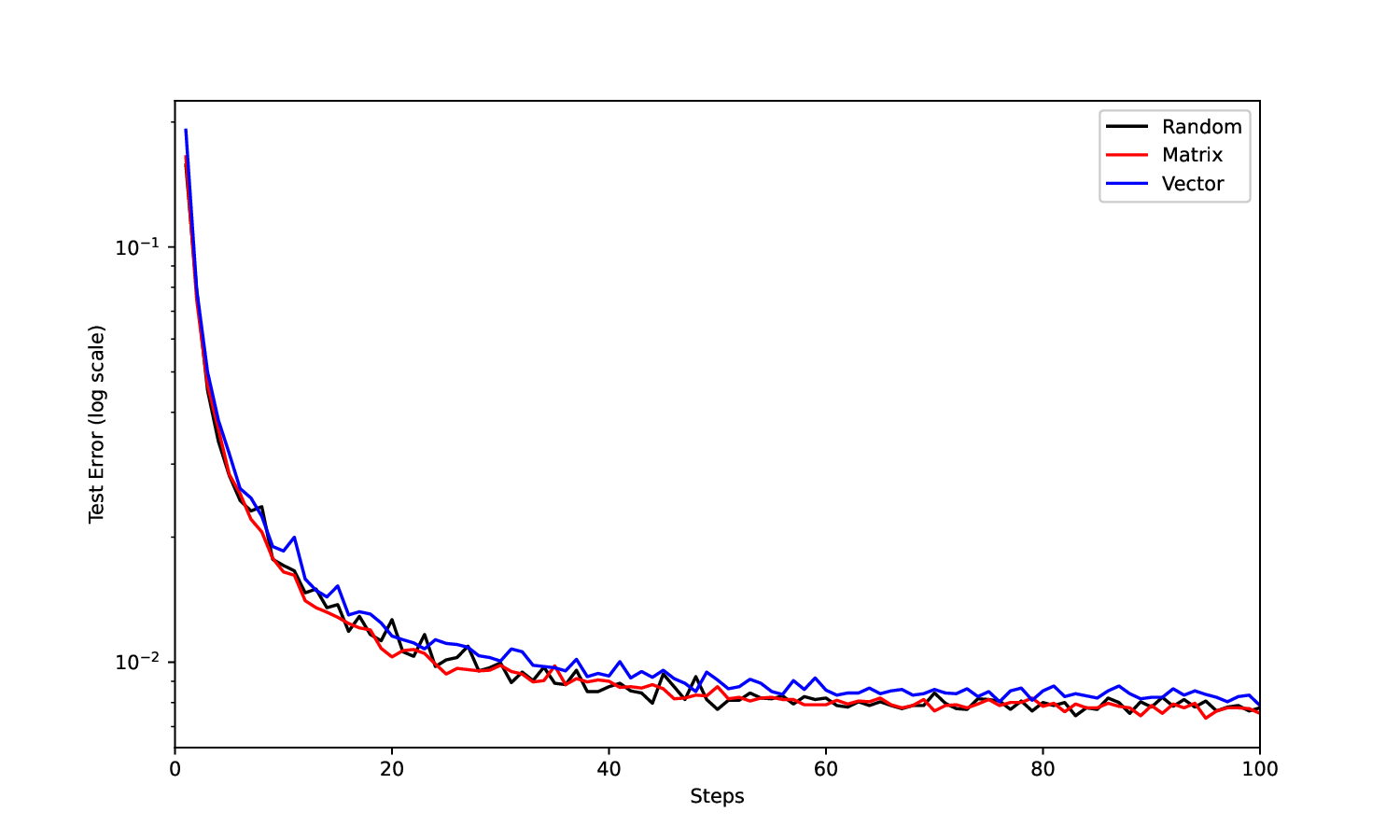}}

\subfloat[\centering SGDW Train Loss]{\label{fig:SGDW Train Loss}\includegraphics[width=0.45\textwidth,height=3.2cm]{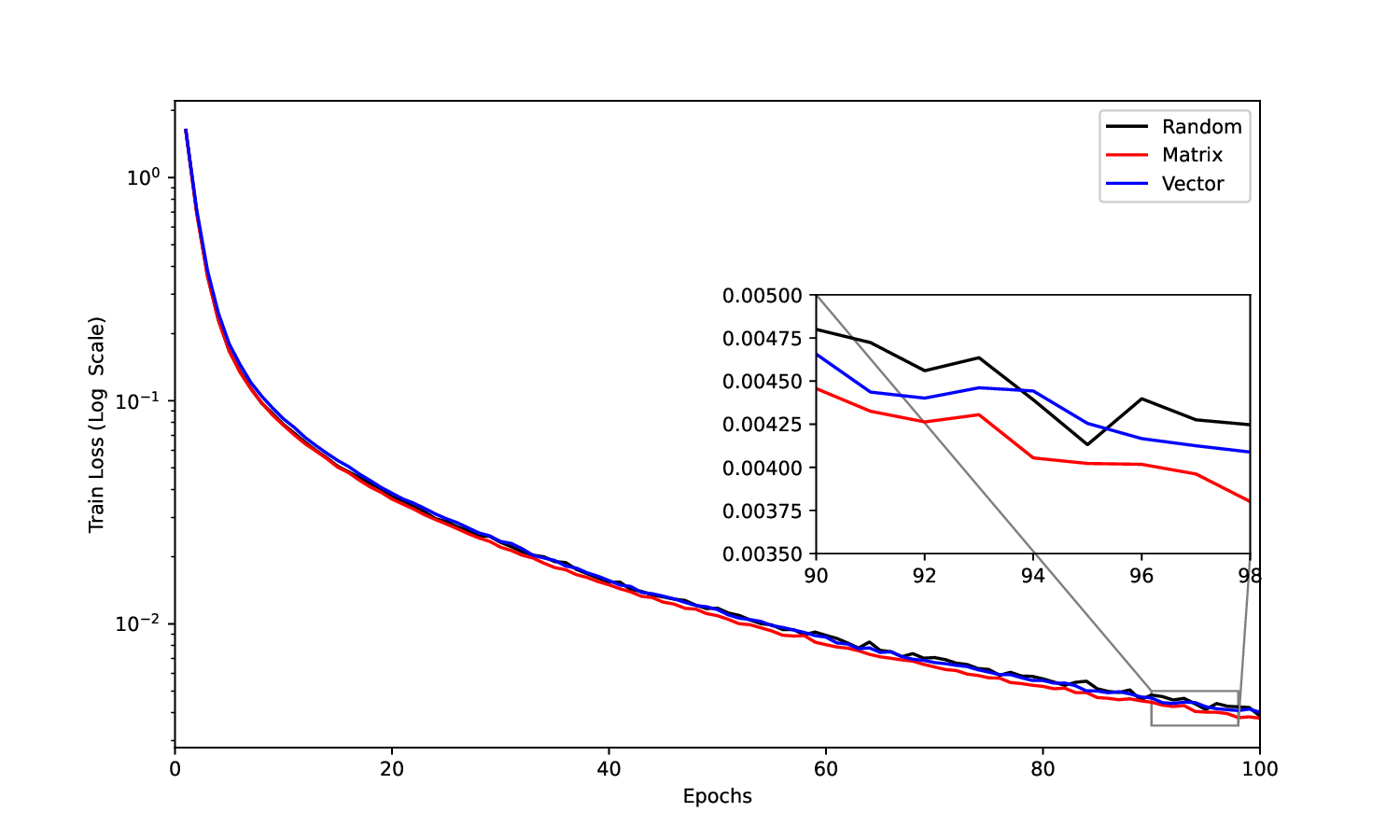}}
\subfloat[\centering SGDW Test Error]{\label{fig:SGDW Test Error}\includegraphics[width=0.45\textwidth,,height=3.2cm]{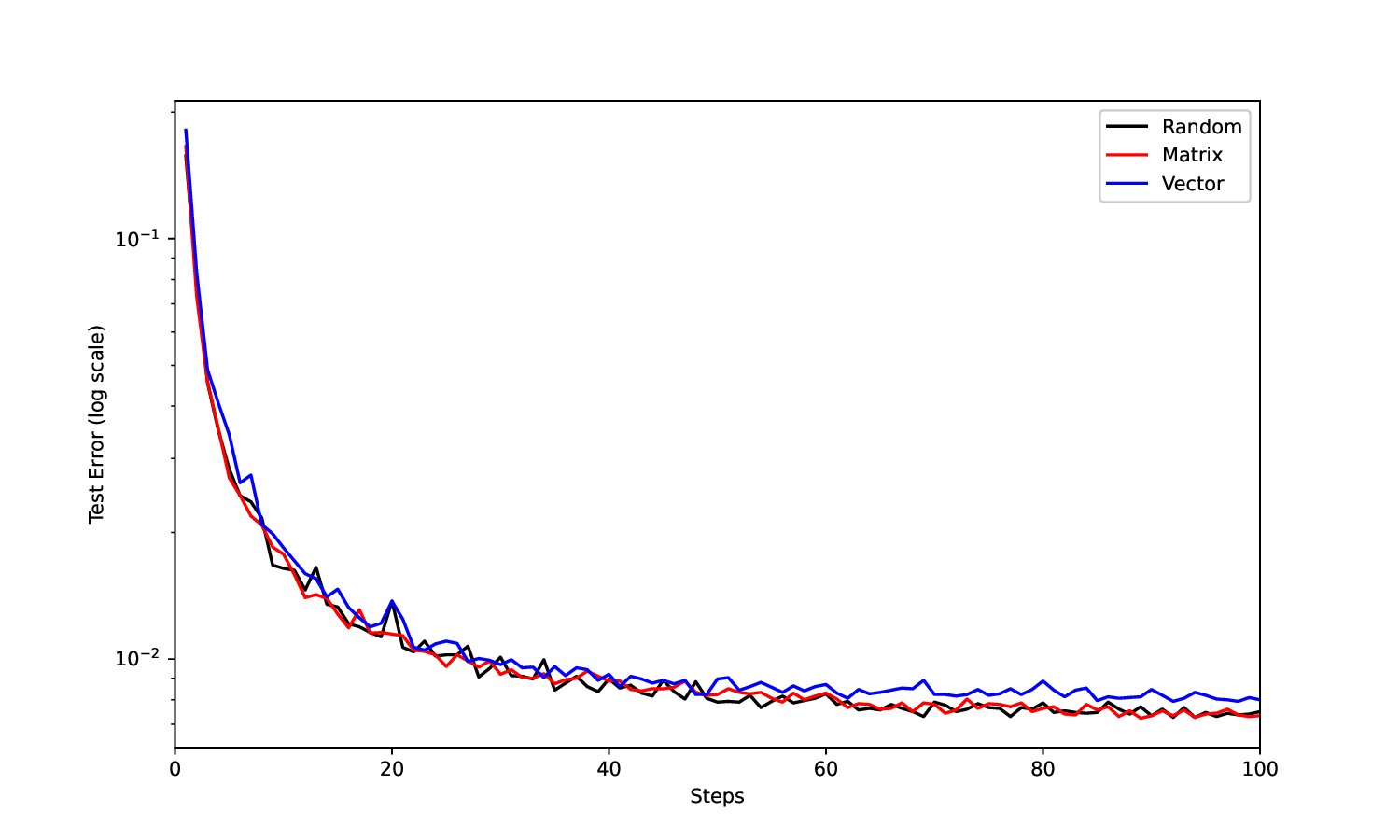}}

\subfloat[\centering SGDWM Train Loss]{\label{fig:SGDWM Train Loss}\includegraphics[width=0.45\textwidth,,height=3.2cm]{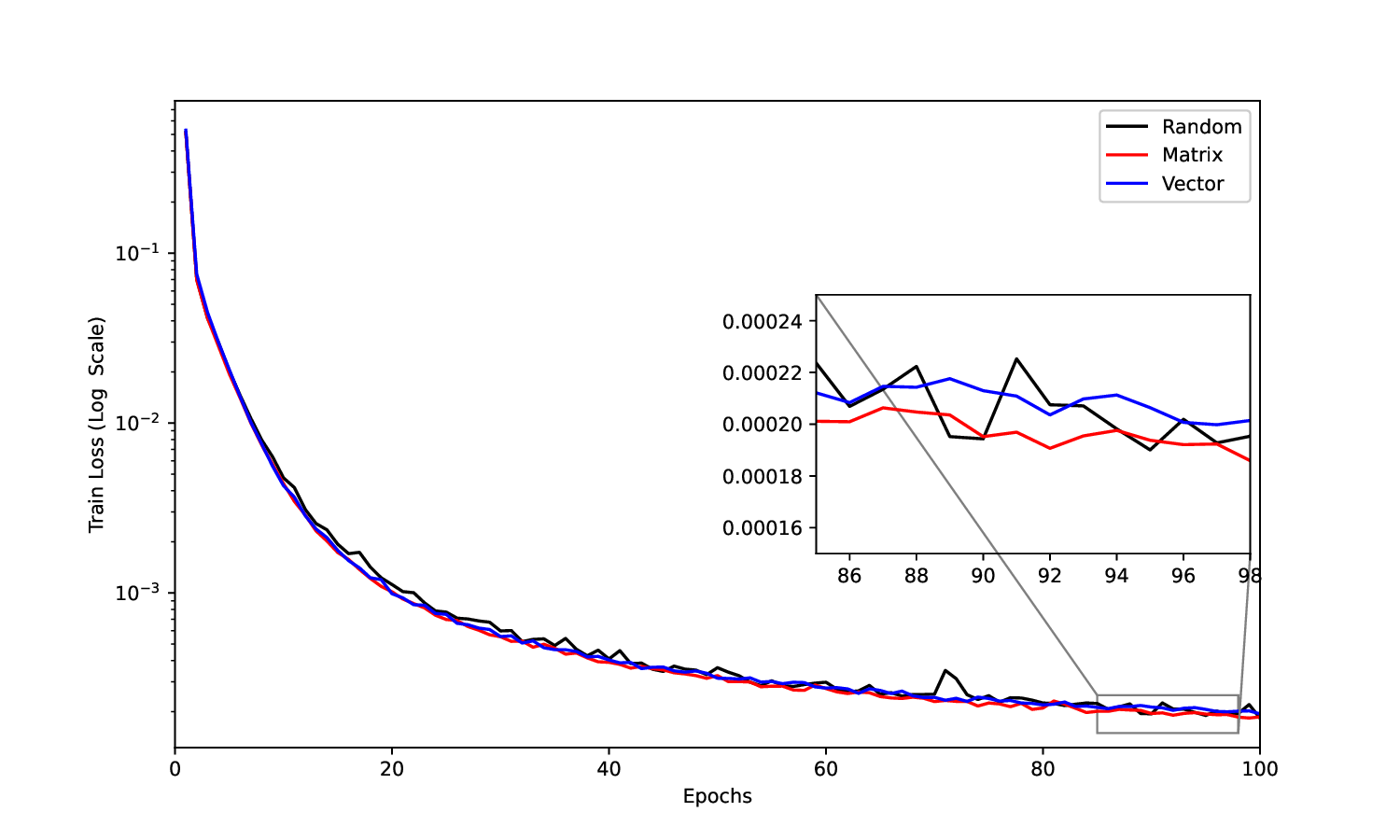}}
\subfloat[\centering SGDWM Test Error]{\label{fig:SGDWM Test Error}\includegraphics[width=0.45\textwidth,,height=3.2cm]{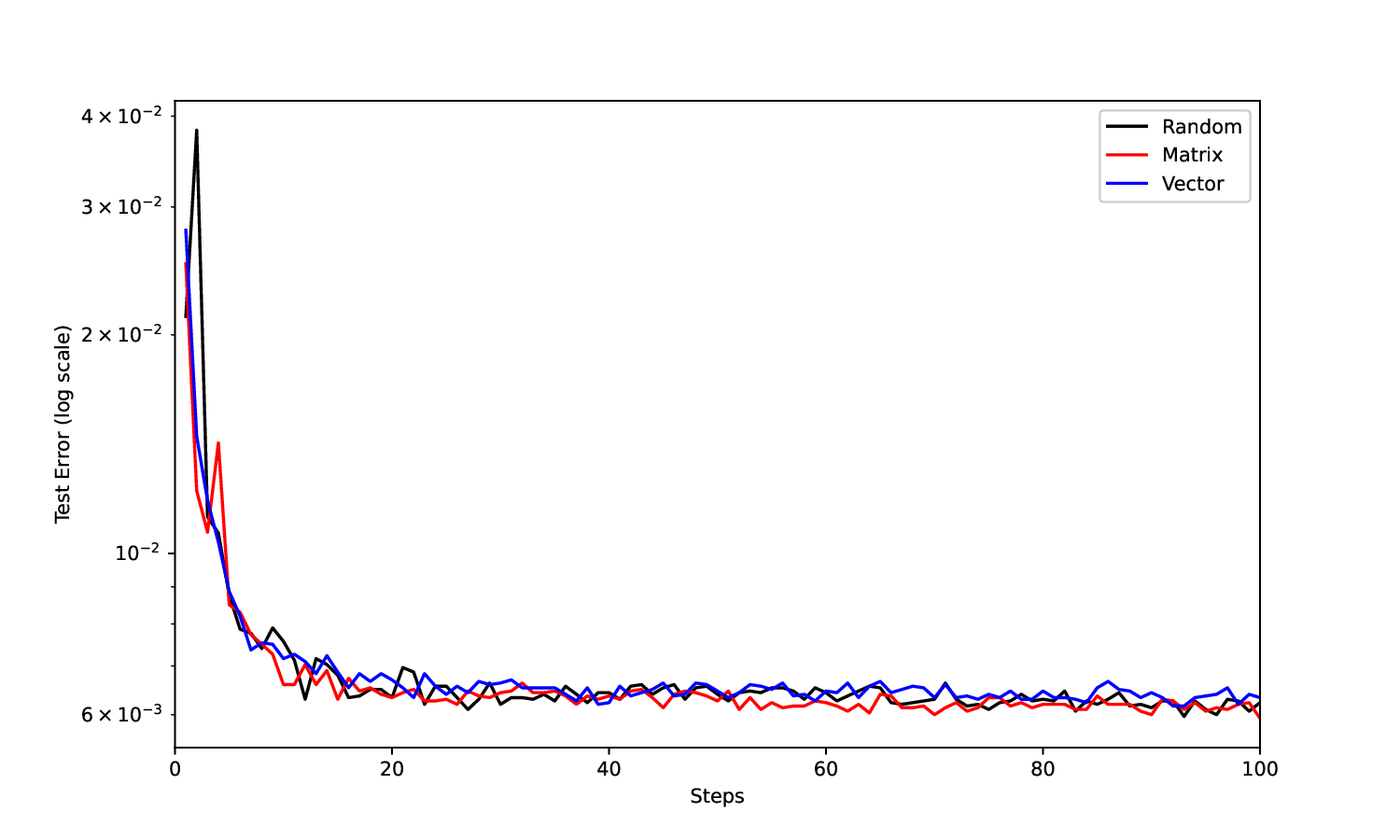}}

\subfloat[\centering ADAM Train Loss]{\label{fig:ADAM Train Loss}\includegraphics[width=0.45\textwidth,,height=3.2cm]{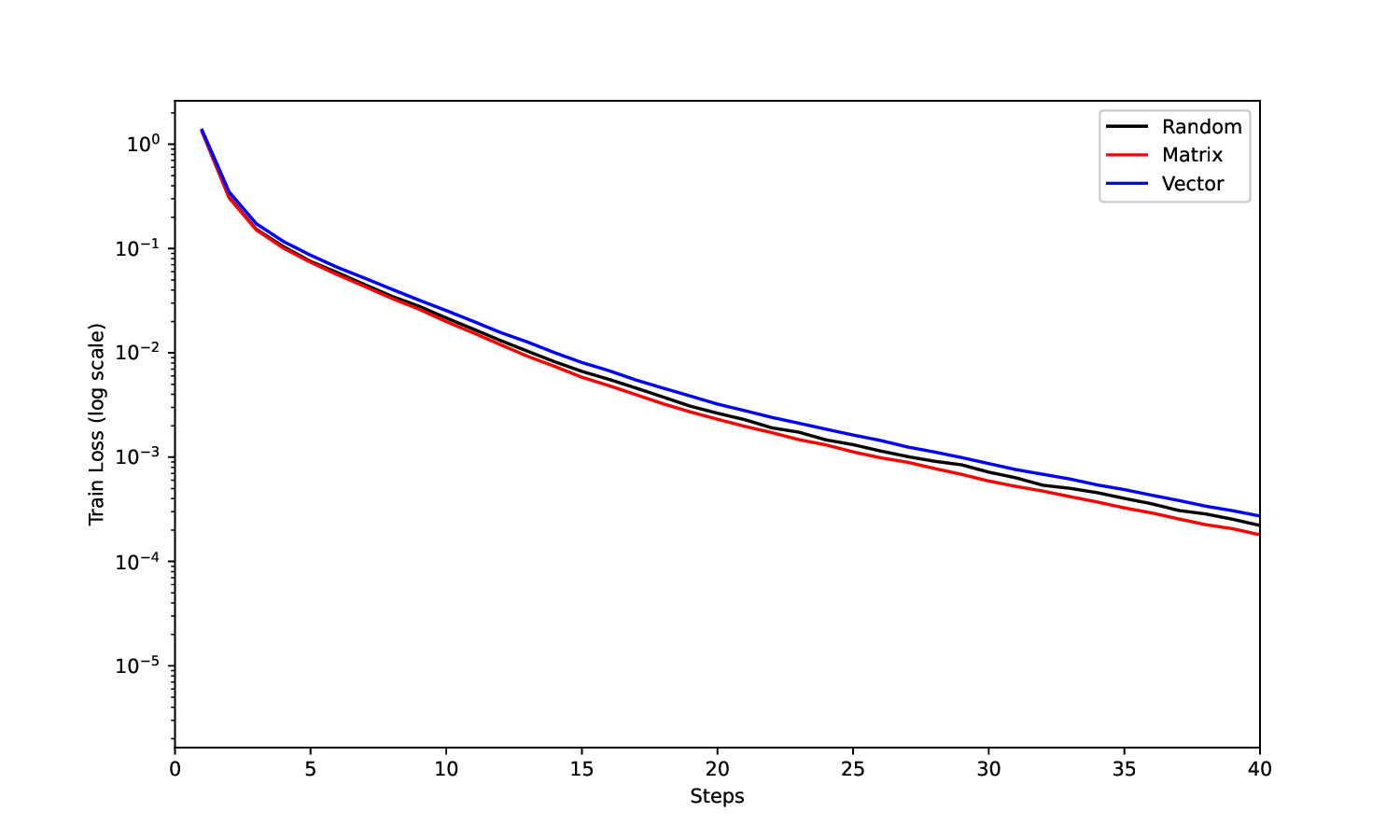}}
\subfloat[\centering ADAM Test Error]{\label{fig:ADAM Test Error}\includegraphics[width=0.45\textwidth,,height=3.2cm]{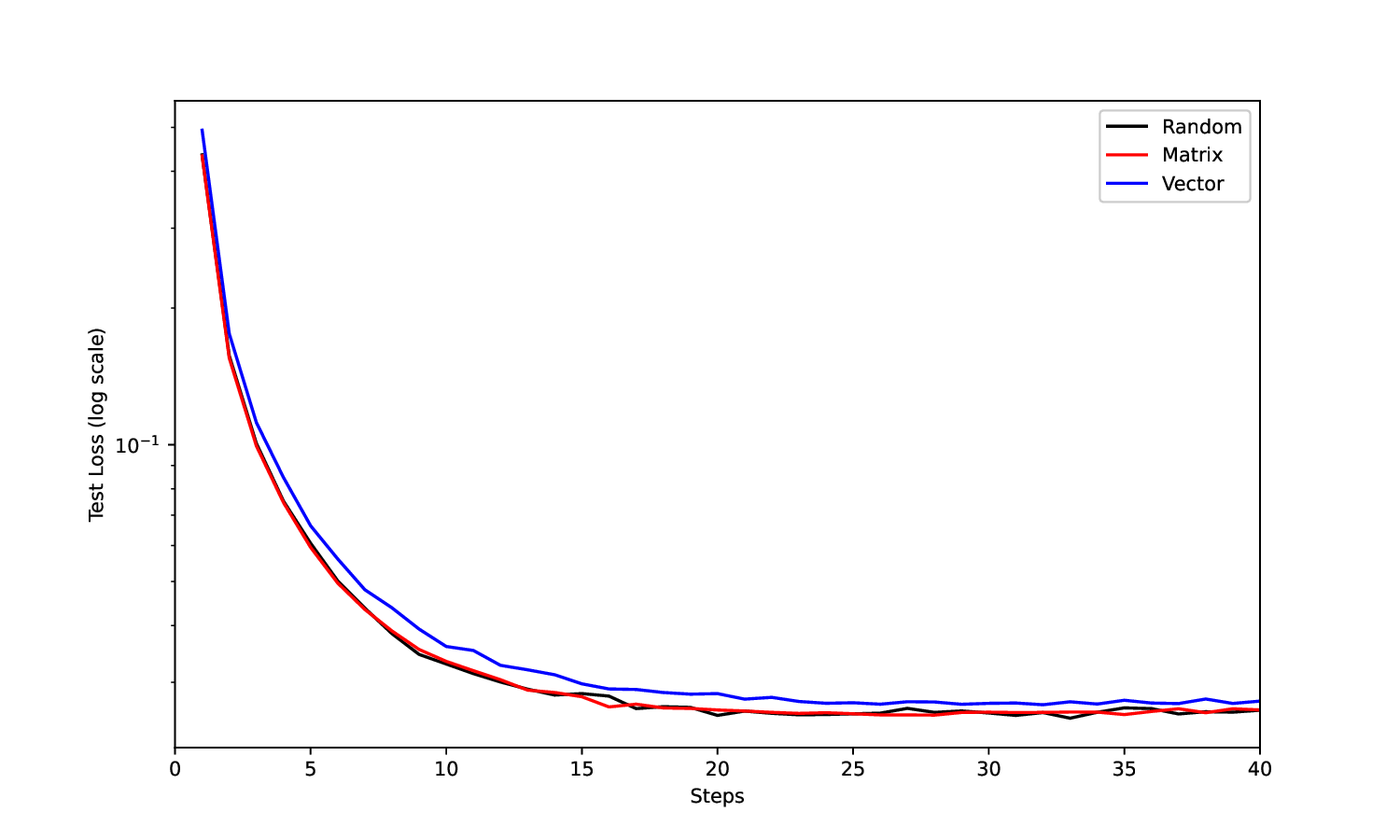}}
\caption{Performance comparison on MNIST dataset using different random optimization methods }
\label{fig:MNIST re}
\end{figure}

\begin{figure}[h!]
\centering
\subfloat[\centering SGD Train Loss]{\label{fig:cifar10 SGD Train Loss}\includegraphics[width=0.45\textwidth]{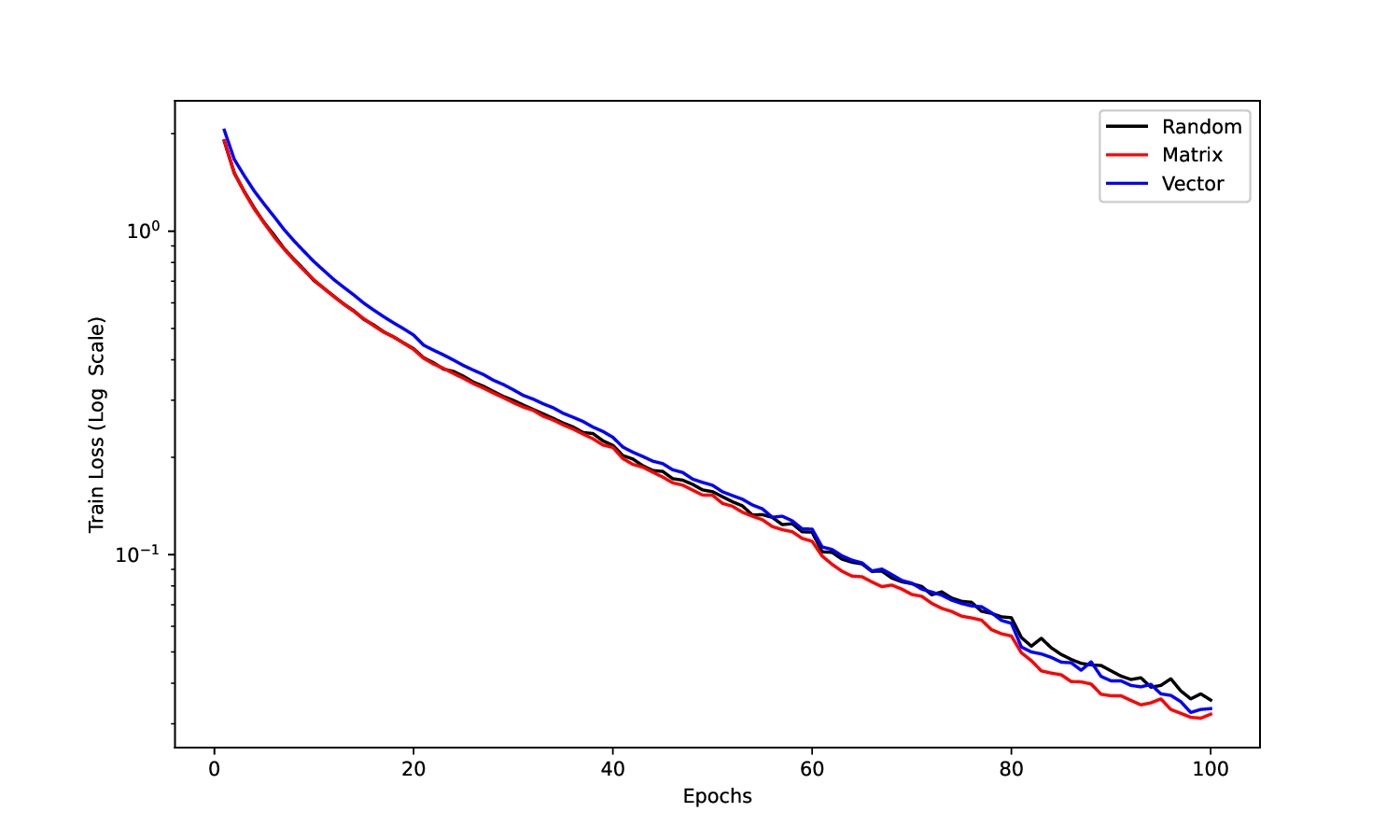}}
\subfloat[\centering SGD Test Error]{\label{fig:cifar10 SGD Test Error}\includegraphics[width=0.45\textwidth]{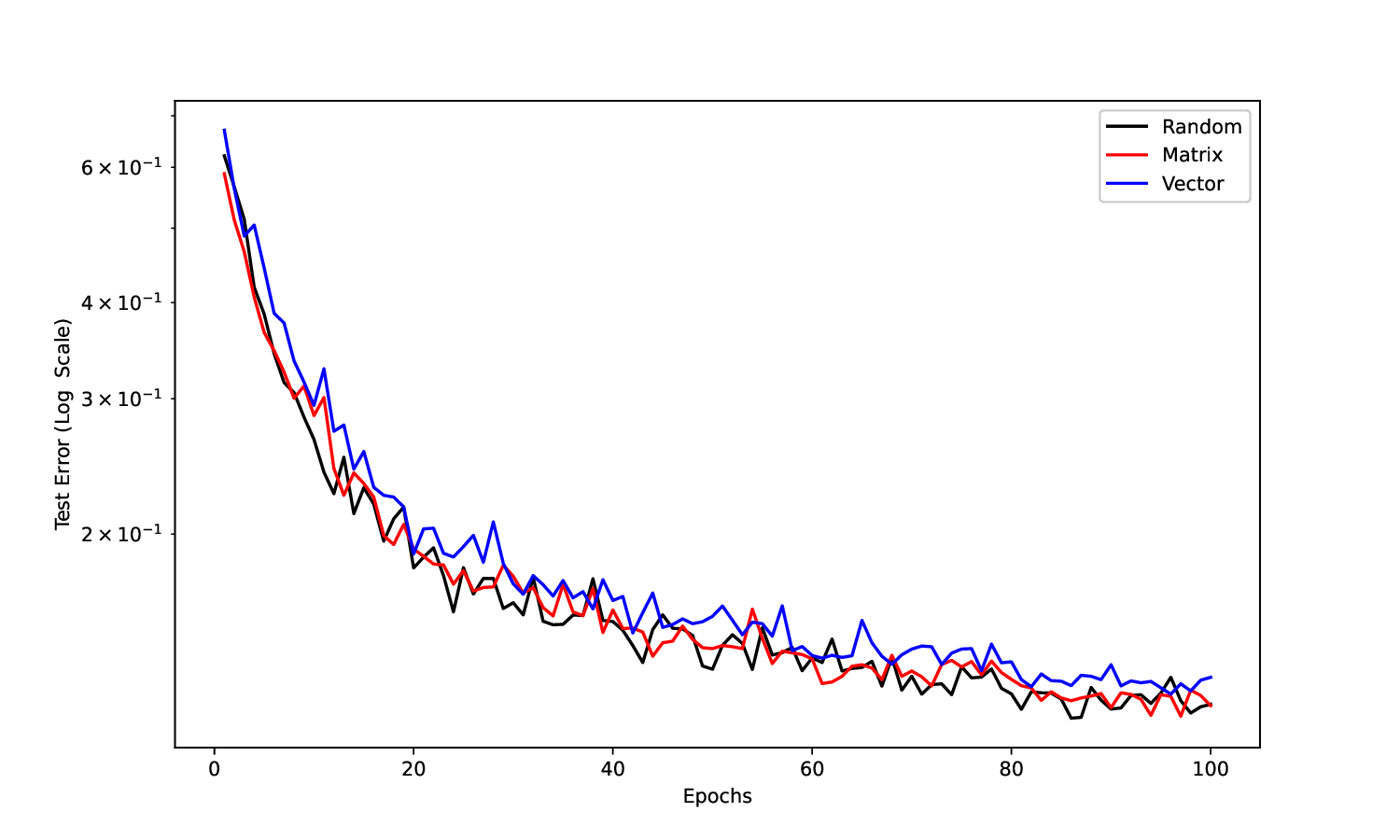}}

\subfloat[\centering SGDW Train Loss]{\label{fig:cifar10 SGDW Train Loss}\includegraphics[width=0.45\textwidth]{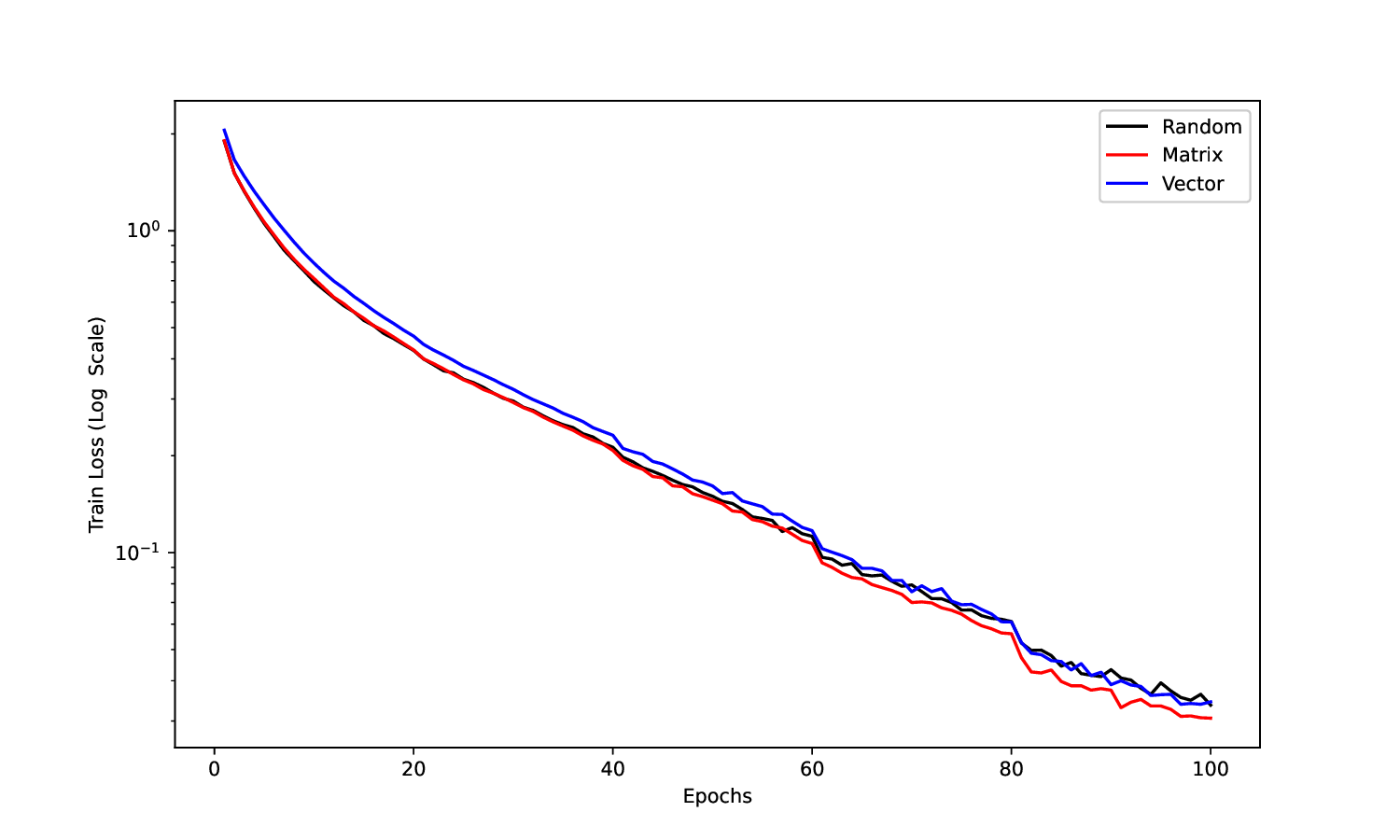}}
\subfloat[\centering SGDW Test Error]{\label{fig:cifar10 SGDW Test Error}\includegraphics[width=0.45\textwidth]{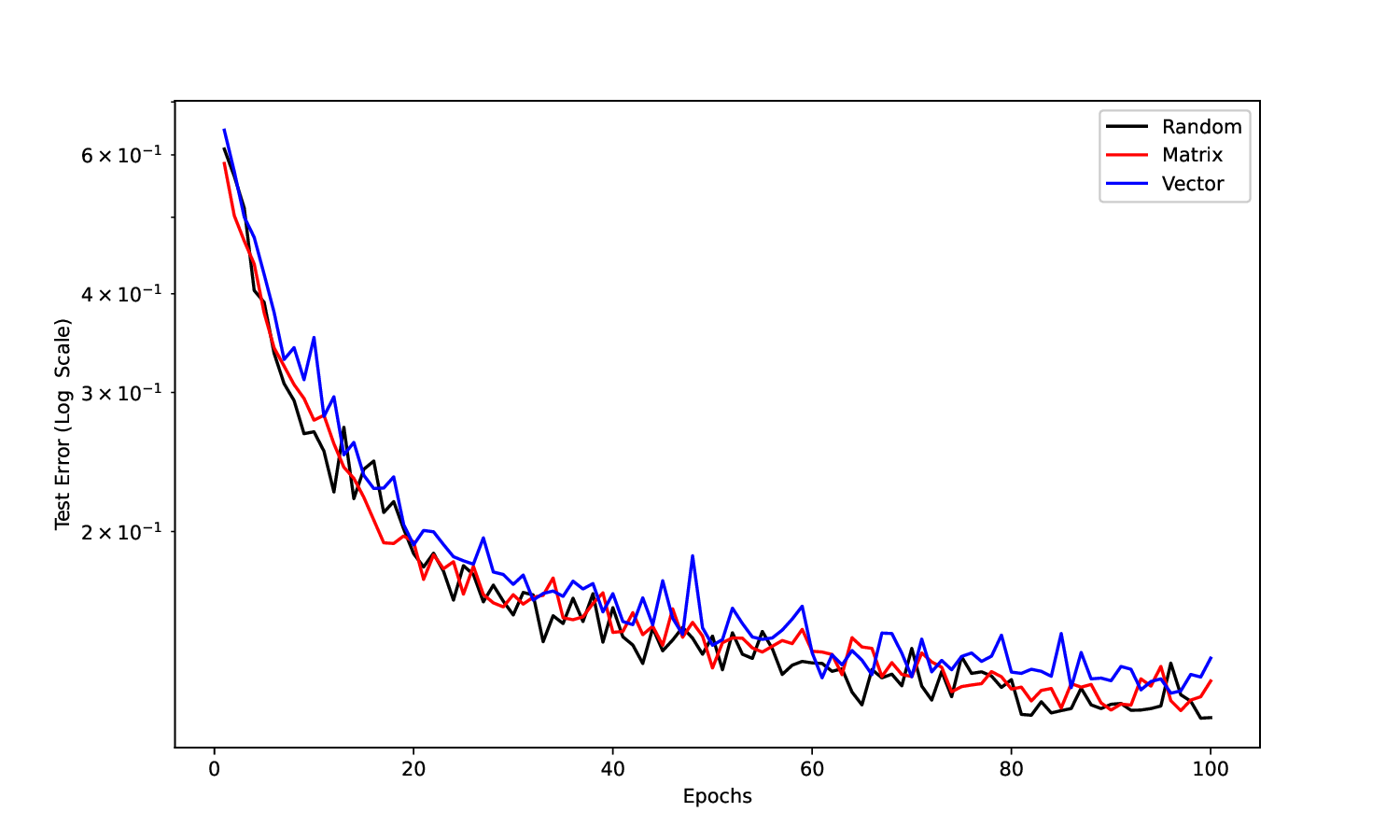}}

\subfloat[\centering SGDWM Train Loss]{\label{fig:cifar10 SGDWM Train Loss}\includegraphics[width=0.45\textwidth]{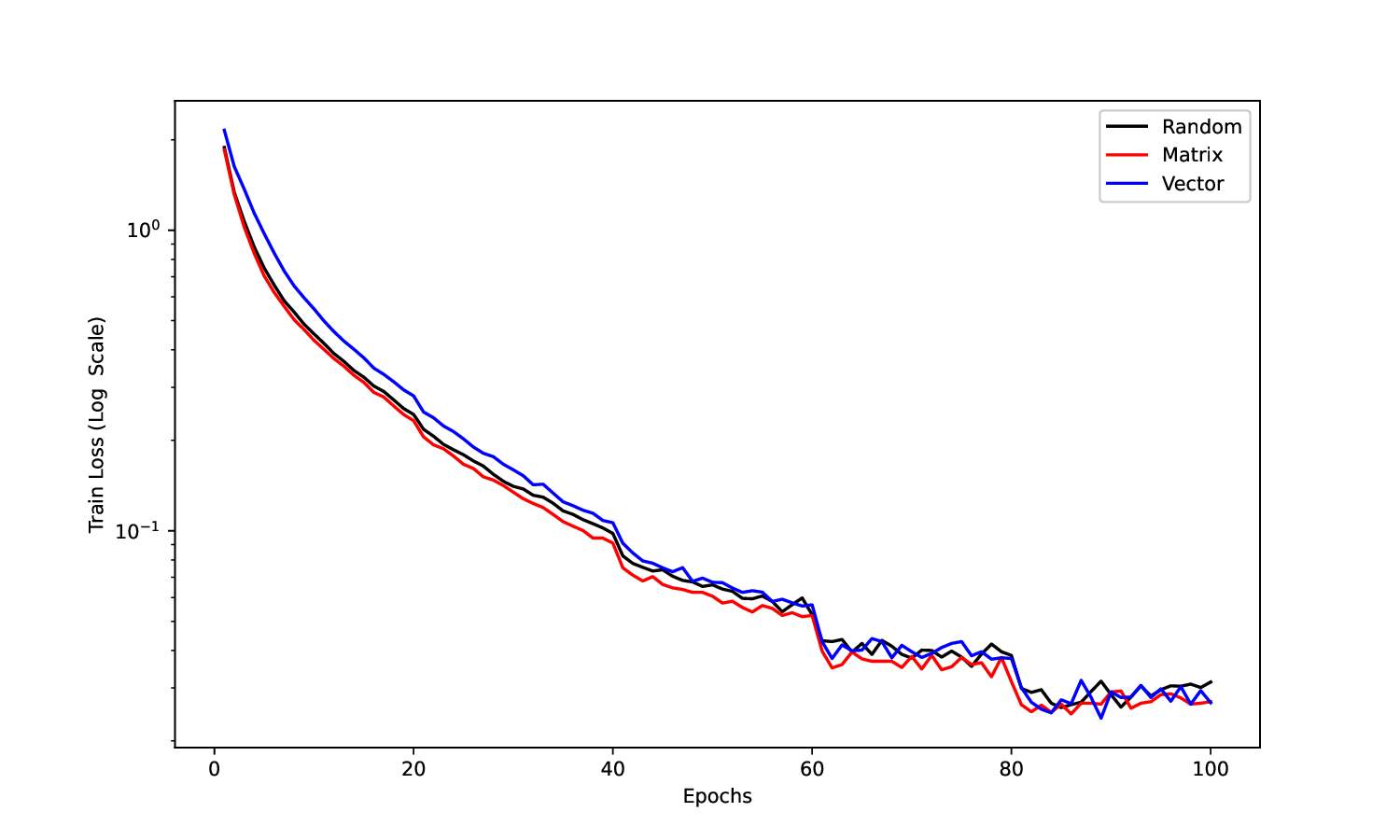}}
\subfloat[\centering SGDWM Test Error]{\label{fig:cifar10 SGDWM Test Error}\includegraphics[width=0.45\textwidth]{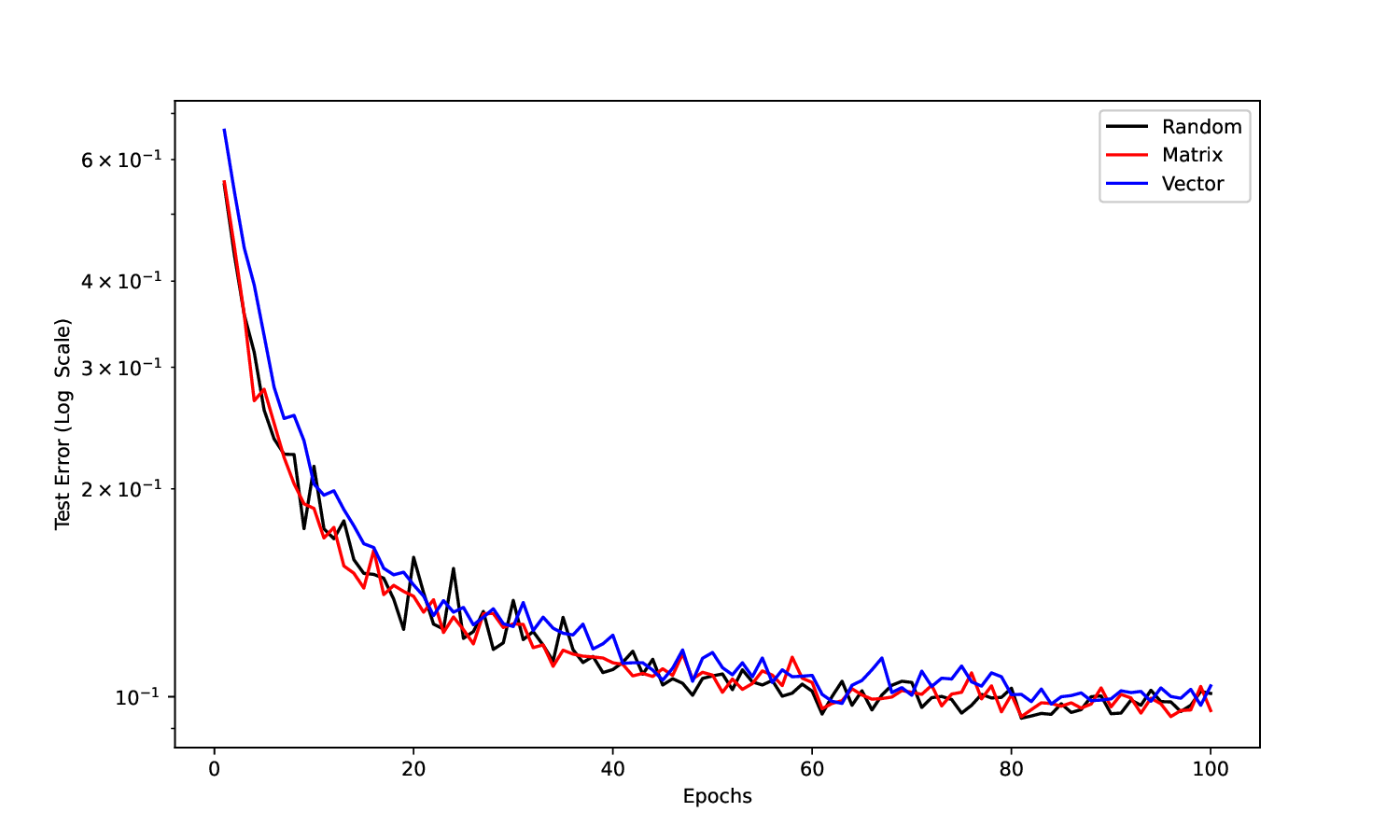}}

\subfloat[\centering ADAM Train Loss]{\label{fig:cifar10 ADAM Train Loss}\includegraphics[width=0.45\textwidth]{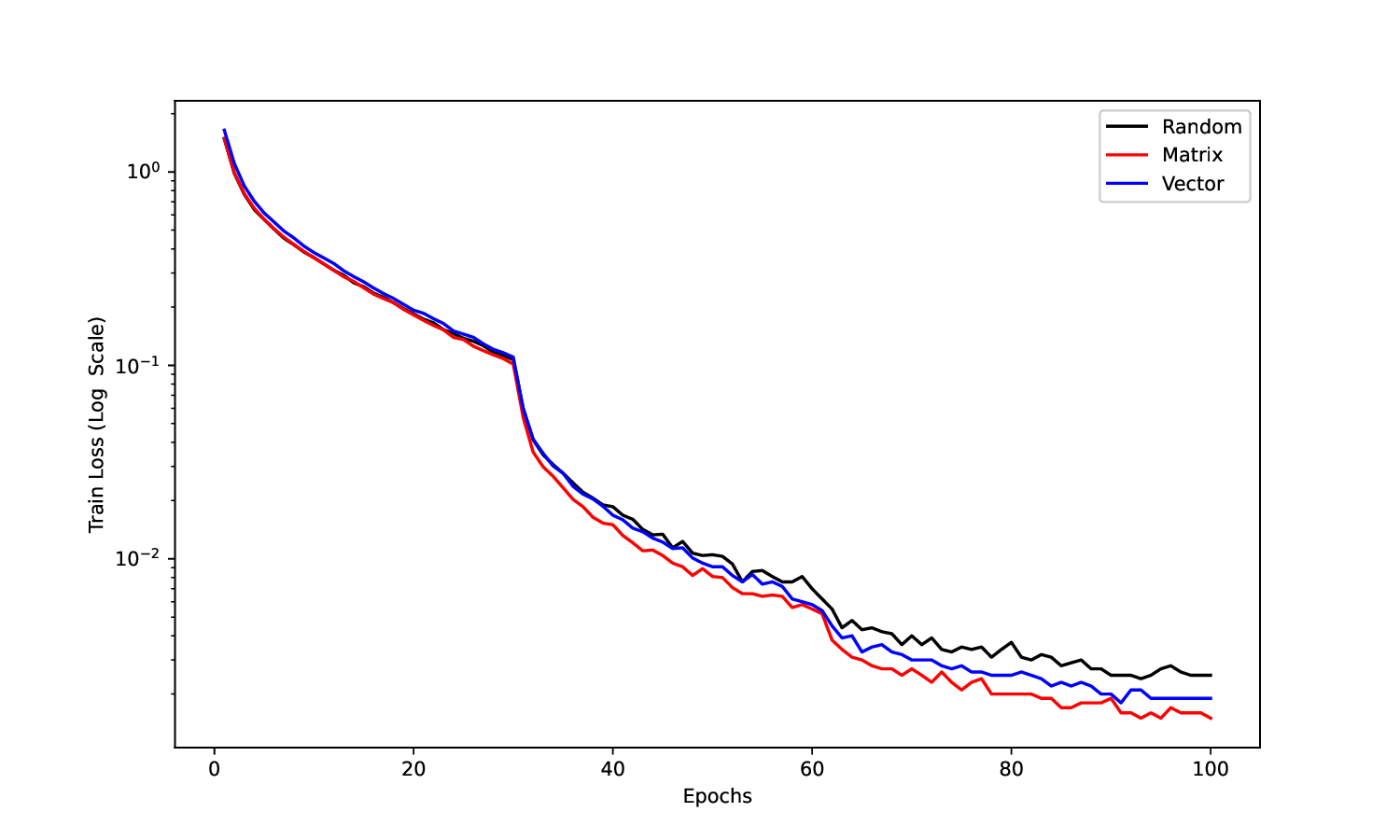}}
\subfloat[\centering ADAM Test Error]{\label{fig:cifar10 ADAM Test Error}\includegraphics[width=0.45\textwidth]{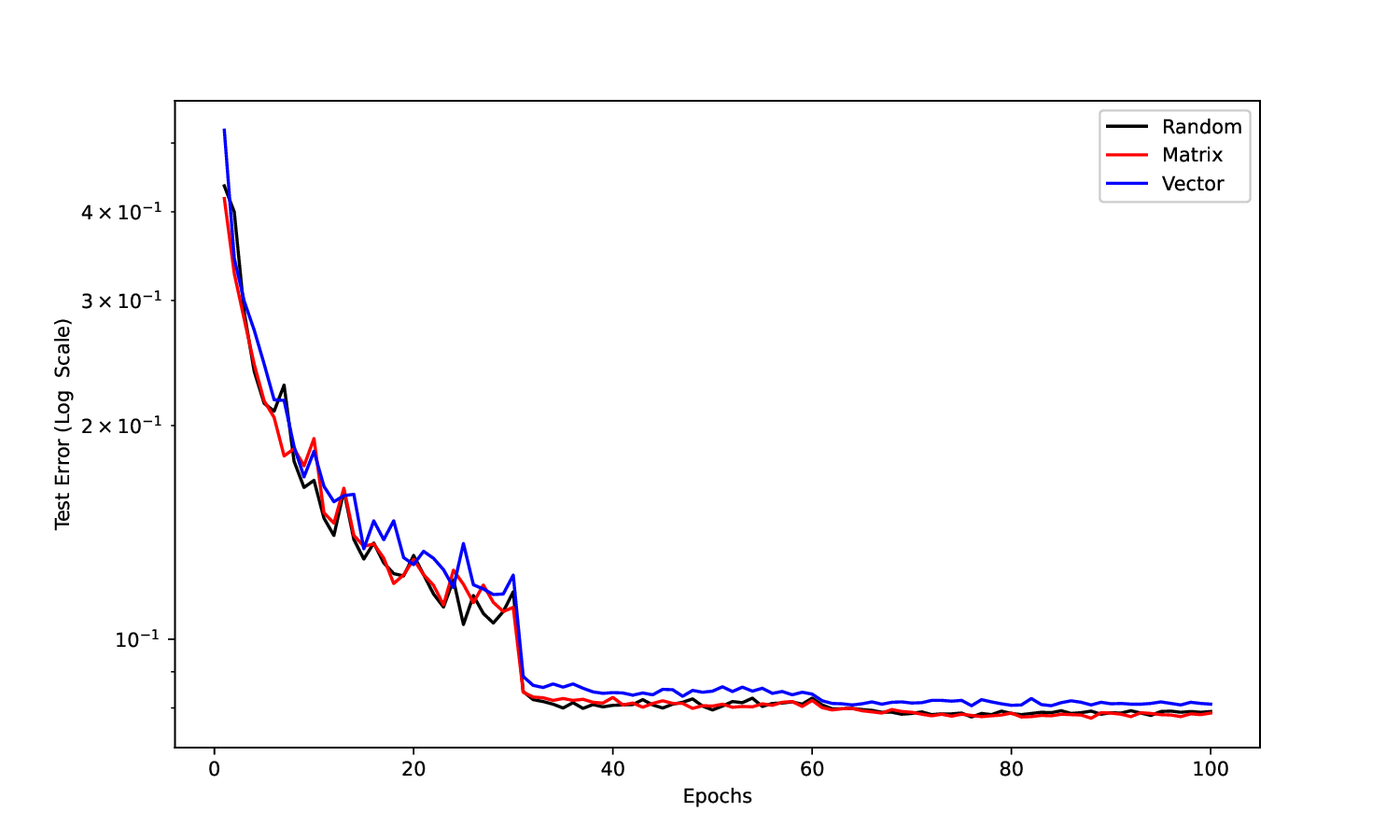}}
\caption{Performance comparison on CIFAR-10 dataset using different random optimization methods }
\label{fig:CIFAR-10 re}
\end{figure}
\section{Conclusions}
\label{sec:conclusions}
This paper considers a quadratic problem  with assignment constraints \cref{eq:ori-pro}, a combinatorial optimization problem widely applicable in engineering and machine learning. We relax the binary constraints into box constraints and introduce an \(\ell_{1/2}\)-regularization strategy for this problem, resulting in problem \cref{eq:l1/2-pro}. Critically, we rigorously prove the exact equivalence between the regularized problem \cref{eq:l1/2-pro} and the original binary problem \cref{eq:ori-pro} and any KKT point of problem \cref{eq:l1/2-pro} is guaranteed to be a feasible binary assignment matrix for problem \cref{eq:ori-pro} when the sparsity penalty parameter is larger than a threshold. We employ a variable-splitting technique and obtain problem \cref{eq:l1/2-proxy}. Problem \cref{eq:l1/2-pro} and \cref{eq:l1/2-proxy} share same global solutions and KKT points. Then we adopt the ADMM algorithm to compute the KKT points of problem \cref{eq:l1/2-proxy}. A comprehensive theoretical analysis is provided to validate the algorithm's performance. Under the assumption that dual variable sequence is bounded, we establish the theorem that ADMM algorithm converges to a KKT point of problem \cref{eq:l1/2-proxy}. Furthermore, beyond a certain threshold of the regularization parameter, ADMM algorithm can converges a local minimizer of problem \cref{eq:l1/2-pro}, which is also a local minimizer of problem \cref{eq:ori-pro}. Under additional assumption of strict complementary, we conclude that the algorithm achieves finite-step termination. In contrast to most existing convergence analyses of nonconvex ADMM, 
our framework covers the challenging case that simultaneously involve nonconvexity, nonsmoothness, coupling terms, and full constraints on all variables. In terms of applications, we demonstrate that problem~\cref{eq:ori-pro} can effectively model MMD-based mini-batch selection. Numerical experiments on synthetic and real datasets verify the feasibility and convergence behavior of the proposed ADMM algorithm. 


\newpage
\clearpage 
\bibliographystyle{siamplain}
\bibliography{references}
\end{document}


\maketitle

\section{A detailed example}

Here we include some equations and theorem-like environments to show
how these are labeled in a supplement and can be referenced from the
main text.
Consider the following equation:
\begin{equation}
  \label{eq:suppa}
  a^2 + b^2 = c^2.
\end{equation}
You can also reference equations such as \cref{eq:matrices,eq:bb} 
from the main article in this supplement.

\lipsum[100-101]

\begin{theorem}
An example theorem.
\end{theorem}

\lipsum[102]
 
\begin{lemma}
An example lemma.
\end{lemma}

\lipsum[103-105]

Here is an example citation: \cite{KoMa14}.

\section[Proof of Thm]{Proof of \cref{thm:bigthm}}
\label{sec:proof}

\lipsum[106-112]

\section{Additional experimental results}
\Cref{tab:smfoo} shows additional
supporting evidence. 

\begin{table}[htbp]
\footnotesize
  \caption{Example table.}\label{tab:smfoo}
\begin{center}
  \begin{tabular}{|c|c|c|} \hline
   Species & \bf Mean & \bf Std.~Dev. \\ \hline
    1 & 3.4 & 1.2 \\
    2 & 5.4 & 0.6 \\ \hline
  \end{tabular}
\end{center}
\end{table}

\bibliographystyle{siamplain}
\bibliography{references}